\newlength{\fixboxwidth}
\newcommand{\re}{\mathbb{R}}\newcommand{\N}{\mathbb{N}}
\newcommand{\zz}{\mathbb{Z}}\newcommand{\C}{\mathbb{C}}
\newcommand{\Z}{{\zz}^d}
\newcommand{\R}{{\re}^d}
\newcommand{\ce}{{\mathcal E}}
\newcommand{\cf}{{\mathcal F}}
\newcommand{\cfi}{{\cf}^{-1}}
\newcommand{\supp}{{\rm supp \, }}
\newcommand{\bproof}{\begin{proof}}
\newcommand{\eproof}{\end{proof}}
\newcommand{\be}{\begin{equation}}
\newcommand{\ee}{\end{equation}}
\newcommand{\beq}{\begin{eqnarray}}
\newcommand{\beqq}{\begin{eqnarray*}}
\newcommand{\eeq}{\end{eqnarray}}
\newcommand{\eeqq}{\end{eqnarray*}}
\numberwithin{equation}{section}
\newtheorem{theorem}{Theorem}[section]
\newtheorem{definition}[theorem]{Definition}
\newtheorem{corollary}[theorem]{Corollary}
\newtheorem{lemma}[theorem]{Lemma}
\newtheorem{proposition}[theorem]{Proposition}
\newtheorem{remark}[theorem]{Remark}
\begin{document}

\nocite{*}

\title{Bernstein Numbers of Embeddings of Isotropic and Dominating Mixed Besov Spaces}

\author{Van Kien Nguyen\\
Friedrich-Schiller-University Jena, Ernst-Abbe-Platz 2, 07737 Jena, Germany\\
kien.nguyen@uni-jena.de
}


\date{\today}

\maketitle 

\begin{abstract}
The purpose of the present paper is to investigate the decay of  Bernstein numbers of the embedding from $B^t_{p_1,q}((0,1)^d)$ into the space $L_{p_2}((0,1)^d) $. The asymptotic behaviour of Bernstein numbers of the identity $id: S_{p_1,p_1}^tB((0,1)^d)\rightarrow L_{p_2}((0,1)^d)$ will be also considered. $ $ 

\end{abstract}
\section{Introduction} 
Bernstein widths were first introduced by Tikhomirov \cite{Ti}, see also \cite[Chapter 3, p. 187]{Ti2}. For a symmetric subset $K$ of a normed linear space $X$, the Bernstein $n$-width of the set $K$ is the quantity
$$b_n(K,X)=\sup_{L_{n+1}}\sup\{\lambda\geq 0: (\lambda U(X))\cap L_{n+1}\subset K\},$$
here $U(X)$ is the closed unit ball in $X$ and the outer supremum is taken over all subspaces $L_{n+1}$ of $X$ such that $\text{dim}\, L_{n+1} = n +1$. If $T$ is a continuous linear operator from Banach space $X$ to Banach space $Y$, that is $T\in \mathcal{L}(X,Y)$, then the $n$-th Bernstein number of $T$ is defined to be
$$ b_n(T)=b_n(T: X\to Y)=\sup_{L_n}\inf_{\substack{x\in L_n
\\ x\not =0}} \dfrac{\|Tx\|}{\| x\|} ,$$
where the supremum is taken over all subspaces $L_n$ of $X$ with dimension $n$. It is obvious that if $T$ is an injective mapping we have $b_{n+1}(T: X\to Y)=b_n(T(U(X)),Y)$. $ $

Bernstein numbers have some connections with $s$-numbers. According to Pietsch, see \cite[2.2.1]{Pi21}, an $s$-function is a map $s$ assigning to every operator $T\in \mathcal L(X,Y)$ a scalar sequence $(s_n(T))$ such that the following conditions are satisfied:
\begin{description}
\item[s1](monotonicity) $\|T\|=s_1(T)\geq s_2(T)\geq...\geq 0 $ for all $T\in \mathcal L(X,Y)$,
\item[s2] (additivity) $ s_{n+m-1}(S+T)\leq s_n(S)+ s_m(T) $ for $S,T\in \mathcal L(X,Y)$ and $m,n\in \mathbb{N}$,
\item[s3](ideal property) $s_n(BTA)\leq \|B\|\cdot s_n(T)\cdot \|A\|$ for $A\in \mathcal L(X_0,X)$, $T\in \mathcal L(X,Y)$, $B\in \mathcal L(Y,Y_0)$,
\item[s4](rank property) $s_n(T)=0$ if $\text{rank}(T)<n$,
\item[s5](norming property) $s_n(id: \ell_2^n\to \ell_2^n)=1$.
\end{description}
The scalar $s_n(T)$ is called the $n$th $s$-number of the operator $T$. Some examples of $s$-number are approximation, Kolmogorov, Gelfand and Weyl numbers, see \cite[Chapter 2]{Pi21}. An $s$-number is called multiplicative if it satisfies
\begin{description}
\item[s6] $s_{n+m-1}(ST)\leq s_n(S)\, s_m(T) $ for $T\in \mathcal L(X,Y)$, $S\in \mathcal L(Y,Z)$ and $m,n=1,2, \ldots \, $.
\end{description}
It is obvious that Bernstein numbers satisfy (s1), (s3), (s4) and (s5). However the question whether Bernstein numbers are $s$-numbers was open until 2008. The negative answer was given by Pietsch \cite{Pie2}. In this paper Pietsch gave an example proving that Bernstein numbers fail to be additive and multiplicative. In the same paper, he also showed that the classes 
$$\mathcal{L}_p^{bern}=\Big\{T: \sum_{n=1}^{\infty}b_n(T)^p<\infty\Big\}\ \ \text{ and }\ \ \mathcal{L}_{p,\infty}^{bern}=\Big\{T: \sup_{n\in \mathbb{N}}n^{\frac{1}{p}}b_n(T)<\infty\Big\},\ \ \ p\in (0,\infty),$$
fail to be operator ideals. There are several different versions of the notion $s$-number in the literature. In one of the earliest definition $(s_2)$ was replaced by
\begin{description}
\item[s2'.] $s_n(S+T)\leq s_n(T)+\|S\|$ for $S,T\in \mathcal L(X,Y)$ and $m,n\in \mathbb{N}$,
\end{description}
see \cite[11.1.1]{Pi1}. It turns out that Bernstein numbers are $s$-numbers in this sense.
Sometimes these changes of the definition cause problems, e.g., Li and Fang \cite{LiFa}, by investigating the asymptotic behaviour of  $b_n(id: B_{p_1,q_1}^{t+s}(\Omega)\to B_{p_2,q_2}^{s}(\Omega))$ (here $\Omega$ is a bounded Lipschitz domains) used the class $\mathcal{L}_{p,\infty}^{bern}$ overlooking that this class is not an operator ideal.

In the literature, the order of Bernstein numbers was studied in different situations. In the one-dimensional periodic situation, the Bernstein widths of the class $\mathring{W}^t_{p_1}$ in $L_{p_2}$ were calculated by Tsarkov and Maiorov, see \cite[Theorem 12, page 194]{Ti2}. For the class $S^{\alpha}_{p}W$ of periodic functions with bounded mixed derivative the corresponding result has been established by Galeev \cite{Gale1} with the condition $1<p,q<\infty$ and sufficiently high order of smoothness. In the same paper, the order of Bernstein widths in ${L}_{q}$ of the class $S^{\alpha}_{p,\infty}B$ of periodic functions of several variable with bounded mixed difference in the cases of $1<q\leq p<\infty$ and $1<p,q<2$ with high smoothness have been found. Later on, the asymptotic behaviour of Bernstein widths of the class $F^{l,w}_{p}((0,1)^d)$ in the space $L_{q}((0,1)^d)$ were carried out by Kudryavtsev \cite{Ku1}. Recently, estimate of Bernstein widths for classes of convolution functions
with kernels satisfying certain oscillation properties and classes of periodic functions with formal self-adjoint linear differential operators have been established, see \cite{Fen2,Fen1}.

Estimates of the asymptotic behaviour of $s$-numbers, namely approximation, Kolmogorov, Gelfand and Weyl numbers, of embeddings $id: B^t_{p_1,q}((0,1)^d)\rightarrow L_{p_2}((0,1)^d)$ were given by many authors, see Lubitz \cite{Lub}, K\"onig \cite{Koe}, Caetano \cite{Cae1, Cae2, Cae3}, Edmunds and Triebel \cite{ET0,ET1} and Vybiral \cite{Vy2}. Concerning the $s$-numbers of embeddings $ S^t_{p_1,p_1}B((0,1)^d) \rightarrow L_{p_2}((0,1)^d)$, the picture is less complete than in the situation of isotropic Besov spaces. The asymptotic order of approximation and Kolmogorov numbers were studied by Bazarkhanov \cite{Baz4}, Galeev \cite{Gale2} and Romanyuk \cite{Rom7,Rom4,Rom8,Rom1,Rom2,Rom5}. Recently, the order of Weyl numbers has been calculated by Nguyen and Sickel \cite{KiSi}. In this paper we will give a quite satisfactory answer about the asymptotic behaviour of Bernstein numbers for the both cases, isotropic as well as dominating mixed smoothness. 

The paper is organized as follows. Our main results are discussed in Section \ref{sec2}. In Section \ref{sec3} and \ref{sec4}, we will recall definitions of some $s$-numbers and nonlinear widths and discuss their relations to Bernstein numbers. Section \ref{sec5} is devoted to the investigation of the asymptotic behaviour of the Bernstein numbers of embeddings in the case of isotropic Besov spaces. Finally, in Section \ref{sec6}, after estimating the Bernstein numbers of embeddings of certain sequence spaces (which are associated to Besov spaces of dominating mixed smoothness), we shift these results to function spaces.
\subsection*{Notation}

As usual, $\N$ denotes the natural numbers, $\N_0 := \N \cup \{0\}$,
$\zz$ the integers and
$\re$ the real numbers. For a real number $a$ we put $a_+ := \max(a,0)$. By $[a]$ we denote the integer part of $a$.
If $\bar{j} \in \N_0^d$, i.e., if $\bar{j}=(j_1, \ldots \, , j_d)$, $j_\ell \in \N_0$, $\ell=1, \ldots \, , d$, then we put
\[
|\bar{j}|_1 := j_1 + \ldots \, + j_d\, .
\] By $\Omega$ we denote the unit cube in $\R$, i.e., $\Omega:= (0,1)^d$.
If $X$ and $Y$ are two quasi-Banach spaces, then $\mathcal{L}(X,Y)$ denotes the space of all linear bounded operators from $X$ to $Y$.
As usual, the symbol $c $ denotes positive constants 
which depend only on the fixed parameters $t,p,q$ and probably on auxiliary functions, unless otherwise stated; its value may vary from line to line.
Sometimes we will use the symbols ``$ \lesssim $'' 
and ``$ \gtrsim $'' instead of ``$ \le $'' and ``$ \ge $'', respectively. The meaning of $A \lesssim B$ is given by: there exists a constant $c>0$ such that
 $A \le c \,B$. Similarly $\gtrsim$ is defined. The symbol 
$A \asymp B$ will be used as an abbreviation of
$A \lesssim B \lesssim A$.
For a discrete set $\nabla$ the symbol $|\nabla|$ 
denotes the cardinality of this set.
Finally, the symbol $id_{p_1,p_2}^m$ refers to the identity 
\be\label{idlp}
id_{p_1,p_2}^m:~ \ell_{p_1}^m \to \ell_{p_2}^m\, .
\ee
\noindent
{\bf Acknowledgements:}\ I would like to thank Professor Winfried Sickel for many valuable discussions about this work.
\section{The main results}\label{sec2}
Before stating our main results we recall under which conditions the identities $B_{p_1,q}^t(\Omega) \rightarrow L_{p_2}(\Omega)$ and $S_{p_1,p_1}^tB(\Omega) \rightarrow L_{p_2}(\Omega)$ are compact, see \cite[4.3.1]{ET} and \cite[Theorem 3.17]{Vy1}.

\begin{proposition}\label{comp}
{\rm (i)} Let $1\leq p_1,p_2,q\leq \infty$ and $t\in \mathbb{R}$. Then 
$$id: B_{p_1,q}^t(\Omega) \rightarrow L_{p_2}(\Omega)\ \ \text{ is compact if and only if }\ \  \frac{t}{d} > \Big(\frac{1}{p_1}- \frac{1}{p_2} \Big)_+ .$$
{\rm (ii)} Let $1\leq p_1,p_2\leq \infty$ and $t\in \mathbb{R}$. Then 
$$id: S^t_{p_1,p_1}B(\Omega) \rightarrow L_{p_2}(\Omega)\ \  \text{ is compact if and only if }\ \
 t > \Big(\frac{1}{p_1}- \frac{1}{p_2} \Big)_+ .$$
 \end{proposition}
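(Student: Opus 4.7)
The plan is to handle both parts in parallel via the Littlewood-Paley/atomic decomposition of the respective Besov spaces, since the structural difference between (i) and (ii) is only that the dyadic frequency blocks are isotropic annuli indexed by $j\in\N_0$ in case (i), while they are rectangular blocks indexed by $\bar{j}\in\N_0^d$ in case (ii).

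For sufficiency I would decompose $f=\sum_{j} f_j$ (resp.\ $f=\sum_{\bar j} f_{\bar j}$) into bandlimited pieces and apply Nikolskii's inequality: for case (i),
$$
\|f_j\|_{L_{p_2}}\ls 2^{j d(1/p_1-1/p_2)_+}\|f_j\|_{L_{p_1}},
$$
and for case (ii), the corresponding mixed version with $|\bar j|_1$ in place of $j d$. Together with the definition of the Besov norm and H\"older's inequality in $q$, the strict inequality $t/d>(1/p_1-1/p_2)_+$ (resp.\ $t>(1/p_1-1/p_2)_+$) produces a geometric tail, which gives continuity of $id$ into $L_{p_2}$ together with a uniform bound on the tail of the decomposition over the unit ball of the domain. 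Compactness then follows because truncation at level $J$ provides a finite-rank operator that converges in operator norm to $id$ as $J\to\infty$.

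For necessity I would argue contrapositively. Outside the closed range, a standard scaling argument using bump functions $\varphi_j$ concentrated at scale $2^{-j}$ (so that $\|\varphi_j\|_{B^t_{p_1,q}}\asymp 2^{j(t-d/p_1)}$ and $\|\varphi_j\|_{L_{p_2}}\asymp 2^{-jd/p_2}$, with the analogous mixed estimates in case (ii)) shows that even continuity of $id$ fails when $t/d<(1/p_1-1/p_2)_+$. On the boundary of the compactness region the map is still continuous, and non-compactness is obtained by constructing a sequence of appropriately normalized atoms with pairwise disjoint supports, each of bounded Besov norm and of $L_{p_2}$-norm bounded from below, so that no subsequence is Cauchy in $L_{p_2}$.

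The main obstacle is the borderline case, where the atomic construction must simultaneously respect the Besov normalization and produce a non-trivial lower bound in $L_{p_2}$; this is particularly delicate in the mixed case (ii) because the admissible atoms live on anisotropic dyadic rectangles, so the construction has to be tuned to the specific geometry of the dominating mixed scale. Since the result is known, this technical step can be imported directly from \cite{ET} and \cite{Vy1}, which is what the author does.
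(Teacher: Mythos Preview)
The paper does not prove this proposition at all; it is stated as a known result and attributed in its entirety to \cite[4.3.1]{ET} and \cite[Theorem 3.17]{Vy1}. Your outline is a correct standard argument (Nikolskii-type inequalities plus operator-norm approximation by level-truncation for sufficiency, scaled bumps and disjoint atoms for necessity), and you correctly identify the references; the only inaccuracy is that the author imports the whole statement from those sources, not merely the borderline case. One small technical point: if you use the Fourier-side Littlewood--Paley pieces, the truncation $S_J$ is not finite-rank but merely compact (via Rellich on $\Omega$); to get genuine finite rank you need the atomic/wavelet description on the bounded domain, where each level carries only finitely many coefficients.
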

 \noindent
Now we turn to our main results. First we consider embeddings of isotropic Besov spaces. For all admissible sets of parameters the behaviour of the Bernstein numbers is polynomially in $n$.
\begin{theorem}\label{main1}
Let $1\leq p_1,p_2,q\leq \infty$ and $\frac{t}{d}>(\frac{1}{p_1}-\frac{1}{p_2})_+$. Then
 $$b_n(id: B^t_{p_1,q}(\Omega)\rightarrow L_{p_2}(\Omega))\asymp \phi_1(n,t,p_1,p_2)=n^{-\alpha},\ \ n\in \mathbb{N},$$
where
\begin{enumerate}
\item {\makebox[3.2cm][l]{$\alpha=\frac{t}{d}$} if\ \ $ p_2<p_1 \le 2$ or $p_1\leq p_2$};
\item {\makebox[3.2cm][l]{$\alpha=\frac{t}{d} - \frac{1}{p_1} + \frac 12$} if\ \ $p_2 \le 2 \le p_1 ,\ \frac{t}{d}>\frac{1}{p_1}$};
\item {\makebox[3.2cm][l]{$\alpha=\frac{t}{d} + \frac{1}{p_2} - \frac{1}{p_1} $} if\ \ $2 \le p_2 \le p_1,\ \frac{t}{d}> \frac{1/p_2 - 1/p_1}{p_1/2 - 1} $};
\item {\makebox[3.2cm][l]{$\alpha= \frac{tp_1}{2d} $} if\ \ $\big(2 \le p_2 < p_1,\ \frac{t}{d} < \frac{1/p_2 - 1/p_1}{p_1/2 - 1}\big)$ or $\big(1< p_2 \le 2 \le p_1,\ \frac{t}{d}< \frac{1}{p_1}\big)$}.
\end{enumerate}
\end{theorem}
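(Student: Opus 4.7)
The plan is to reduce the theorem to a discrete sequence-space statement via a wavelet (or atomic) decomposition of $B^t_{p_1,q}(\Omega)$, and then combine known Bernstein-number estimates for the finite-dimensional identities $id_{p_1,p_2}^m$. Concretely, using a compactly supported smooth wavelet basis, $B^t_{p_1,q}(\Omega)$ is isomorphic to a weighted $\ell_q(\ell_{p_1}^{N_j})$-type sequence space with $N_j\asymp 2^{jd}$, and the embedding $id$ becomes essentially block-diagonal, sending the $j$-th block $\ell_{p_1}^{N_j}$ into $\ell_{p_2}^{N_j}$ with the weight $2^{-j(t-d/p_1+d/p_2)}$. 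This reduction, which is standard and will be spelled out in Section \ref{sec5}, reduces the problem to estimating Bernstein numbers of such block operators.

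For the upper bound, the key observation is that although Bernstein numbers are not additive, they are dominated by the Gelfand and Kolmogorov numbers (a relation to be recalled in Sections \ref{sec3}--\ref{sec4}). Combining this with the existing $s$-number estimates of Lubitz, K\"onig, Caetano, Edmunds--Triebel and Vyb\'\i ral yields $b_n(id)\ls \phi_1(n,t,p_1,p_2)$ in cases (i)--(iii) directly. In case (iv), where the exponent $tp_1/(2d)$ is strictly smaller than the corresponding Gelfand/Kolmogorov exponent, the upper bound requires a more refined argument: truncate the wavelet expansion above a level $J$ with $2^{Jd}\asymp n^{p_1/2}$, treat the low-frequency part directly using the finite-dimensional estimate $b_n(id_{p_1,p_2}^m)\ls m^{-1/p_1}$ valid for $p_1\ge 2\ge p_2$ and $n\le m/2$, and absorb the tail by means of the Weyl/Kolmogorov estimate.

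For the lower bound, one must exhibit, in each case, an explicit $n$-dimensional subspace of $B^t_{p_1,q}(\Omega)$ on which $id$ is bounded below by the claimed rate. In case (i) a single dyadic level $j$ with $2^{jd}\asymp n$ suffices: the subspace spanned by $n$ wavelets at that level together with the lower bound $b_n(id_{p_1,p_2}^{2n})\gtrsim 1$ (respectively $\asymp n^{1/p_2-1/p_1}$ if $p_1\le p_2$) gives the desired rate. In case (iii) the same one-level construction combined with the known estimate $b_n(id_{p_1,p_2}^m)\asymp m^{1/p_2-1/p_1}$ for $2\le p_2\le p_1$ does the job. In cases (ii) and (iv), which are the genuinely hard ones, the optimal subspace is no longer supported on a single level; one selects, for each admissible $j$, a roughly $2^{jd}$-dimensional block of wavelets and uses the finite-dimensional estimate $b_n(id_{p_1,p_2}^m)\asymp m^{-1/p_1}$ (for $p_1\ge 2\ge p_2$ and $n\le m/2$) together with a careful balancing of $j$ so that the weighted-$\ell_q$ Besov norm of the test subspace remains under control while the $L_{p_2}$-norm is forced from below.

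The hardest step will clearly be the lower bound in cases (ii) and (iv), where the Bernstein exponent is strictly smaller than any of the classical $s$-number exponents; this reflects a genuine gap phenomenon and forces us to use the specific structure of Bernstein widths (the existence of a whole $n$-dimensional subspace with good lower $L_{p_2}$-norm, rather than merely good $L_{p_2}$-approximability from the other side). The relevant finite-dimensional input is the asymptotic $b_n(id_{p_1,p_2}^m)\asymp m^{-1/p_1}$ in the regime $p_1\ge 2\ge p_2$, together with its precise dependence on $n/m$, which must be combined with the correct choice of which dyadic level to populate. Checking the threshold smoothness condition $t/d>(1/p_2-1/p_1)/(p_1/2-1)$ separating cases (iii) and (iv), and handling the limit cases $p_1=\infty$ and $q=\infty$, will also require some additional care.
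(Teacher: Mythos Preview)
Your proposal has several genuine gaps, both in the upper and lower estimates.

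\textbf{Wrong finite-dimensional input.} The estimate $b_n(id_{p_1,p_2}^m)\asymp m^{-1/p_1}$ for $p_1\ge 2\ge p_2$, $n\le m/2$, which you invoke repeatedly, is false. When $p_2\le p_1$ one has $\|x\|_{p_2}\ge \|x\|_{p_1}$ for every $x$, so $b_n(id_{p_1,p_2}^m)\ge 1$ for all $1\le n\le m$. The correct behaviour (obtained from the duality $b_n(T)c_{m-n+1}(T^{-1})=1$ together with the Gluskin estimates for Gelfand numbers) is $b_n(id_{p_1,p_2}^{2n})\asymp n^{1/p_2-1/2}$ for $p_2\le 2\le p_1$, and $b_n(id_{p_1,p_2}^{m})\asymp m^{1/p_2-1/p_1}$ for $1\le n\lesssim m^{2/p_1}$ when $\max(p_2,2)\le p_1$. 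These are the estimates the paper actually uses.

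\textbf{Lower bounds do not need multi-level subspaces.} Contrary to your plan, cases (ii) and (iv) are handled by a single dyadic block. The key trick is the \emph{choice of the ratio} $n/m$: with the discretization inequality $b_n(id)\gtrsim m^{-t/d+1/p_1-1/p_2}\,b_n(id_{p_1,p_2}^{m})$, take $n\asymp m/2$ in case~(ii) and insert $b_n(id_{p_1,p_2}^{2n})\gtrsim n^{1/p_2-1/2}$; take $n\asymp m^{2/p_1}$ in case~(iv) and insert $b_n(id_{p_1,p_2}^{m})\gtrsim m^{1/p_2-1/p_1}$. No multi-level construction or ``careful balancing'' across levels is required.

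\textbf{Upper bounds: Gelfand/Kolmogorov are too rough, and truncation does not help.} Your plan to obtain the upper bound in (i)--(iii) from $b_n\le\min(c_n,d_n)$ fails in parts of case~(i): for instance when $p_1<2<p_2$ with small smoothness, neither $c_n$ nor $d_n$ achieves the rate $n^{-t/d}$. The paper's remedy is the DeVore--Howard--Micchelli inequality $b_{n+1}\le\delta_n$, together with the known fact that the nonlinear manifold width satisfies $\delta_n(U(B^t_{p_1,q}(\Omega)),L_{p_2}(\Omega))\asymp n^{-t/d}$ for \emph{all} parameters. For cases~(ii)--(iv) the paper does not split or truncate at all (Bernstein numbers are not additive, as you note yourself); instead it uses Pietsch's inequality $b_{2n-1}(T)\le e\big(\prod_{k=1}^n x_k(T)\big)^{1/n}$, which, since the Weyl numbers $x_n(id)$ are already known to behave polynomially with exactly the exponents $\alpha$ listed in (ii)--(iv), immediately gives $b_n\lesssim x_n$ with the correct rate. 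Thus the whole upper bound is just $b_n\lesssim\min(x_n,\delta_n)$, with no block-splitting argument.
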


\noindent
In the situation of dominating mixed smoothness, our result reads as follows.
\begin{theorem}\label{main2}
Let $1\leq p_1\leq \infty$, $1<p_2<\infty$ and $t>(\frac{1}{p_1}-\frac{1}{p_2})_+$. Then
$$b_n(id: S_{p_1,p_1}^tB(\Omega)\rightarrow L_{p_2}(\Omega))\asymp \phi_2(n,t,p_1,p_2)=n^{-\alpha}(\log n)^{(d-1)\beta},\ \ n\geq 2,$$
where 
\begin{enumerate}
\item {\makebox[4.5cm][l]{$\alpha=t,\ \beta=0$} if\ \ $ p_1,p_2 \le 2,\ t<\frac{1}{p_1}-\frac{1}{2}$};
\item {\makebox[4.5cm][l]{$\alpha=t,\ \beta=t+\frac{1}{2}-\frac{1}{p_1}$} if\ \ $\big( p_1,p_2 \le 2,\ t>\frac{1}{p_1}-\frac{1}{2}\big)$ or $p_1\leq 2\leq  p_2$};
\item {\makebox[4.5cm][l]{$\alpha=\beta=t - \frac{1}{p_1} + \frac 12$} if\ \ $p_2 \le 2 \le p_1,\ t>\frac{1}{p_1}$};
\item {\makebox[4.5cm][l]{$\alpha=\beta=t + \frac{1}{p_2} - \frac{1}{p_1} $} if\ \ $2 \le p_2 \le p_1,\ t>  \frac{1/p_2 - 1/p_1}{p_1/2 - 1} $};
\item {\makebox[4.5cm][l]{$\alpha= \frac{tp_1}{2},\ \beta=t - \frac{1}{p_1} + \frac 12 $} if\ \ $\big(2 \le p_2 < p_1,\ t < \frac{1/p_2 - 1/p_1}{p_1/2 - 1}\big)$ or $\big(p_2 \le 2 \le p_1,\ t<\frac{1}{p_1}\big).$}
\end{enumerate}
\end{theorem}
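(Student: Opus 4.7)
The plan is to follow the same overall strategy used for Theorem \ref{main1} in the isotropic case but to adapt it to the hyperbolic cross structure of $S^t_{p_1,p_1}B(\Omega)$. First I would invoke a tensor-product wavelet (or B-spline) characterization of $S^t_{p_1,p_1}B(\Omega)$ to reduce the problem to estimating the Bernstein numbers of the identity
\[
id:\; s^t_{p_1,p_1}b \longrightarrow \ell_{p_2}
\]
on the associated sequence spaces, where the $s^t_{p_1,p_1}b$-norm decomposes as a weighted $\ell_{p_1}$-sum over dyadic blocks indexed by $(\bar j,\bar k)$ with $\bar j\in \N_0^d$ and $\bar k$ ranging over $\asymp 2^{|\bar j|_1}$ points. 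The ideal property \textbf{s3} of Bernstein numbers guarantees that this reduction is lossless up to constants.

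For the lower bound I would fix a level $J\asymp \log n$, chosen so that the hyperbolic cross $\{\bar j:|\bar j|_1\le J\}$ carries a prescribed number of degrees of freedom, and then restrict attention either to a single block $\bar j$ or to the union of all blocks with $|\bar j|_1=J$. On one block the identity is essentially $id_{p_1,p_2}^{2^{|\bar j|_1}}$, for which sharp Bernstein number estimates are available (Gluskin, Kashin, Bourgain--Lindenstrauss--Milman, etc., as collected in \cite{Pi21}). In the range $p_2\le 2\le p_1$ one uses a Kashin-type subspace to realize the $n^{-1/2}$ contribution; in the range $p_1\le 2$ a coordinate subspace suffices. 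Optimizing $J$ and the block dimension produces both the power $n^{-\alpha}$ and the logarithmic factor $(\log n)^{(d-1)\beta}$, the latter reflecting the multiplicity $\asymp J^{d-1}$ of indices $\bar j$ with $|\bar j|_1=J$. The split into five cases matches exactly the split in the finite-dimensional result for $id_{p_1,p_2}^m$, combined with the choice of whether one uses a single fat block or many thin blocks.

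For the upper bound, the natural impulse is to write $id$ as a sum of block operators and apply an additivity-type inequality $s_n(T)\le\sum_{\bar j}s_{n_{\bar j}}(id_{\bar j})$; but Pietsch's counterexample \cite{Pie2}, recalled in the introduction, forbids this for Bernstein numbers. Instead I would use the comparison $b_n(T)\le c_n(T)$ between Bernstein and Kolmogorov numbers (or a similar comparison with Gelfand or Weyl numbers) that will be set up in Sections \ref{sec3}--\ref{sec4}, and then quote the sharp Kolmogorov/Weyl estimates for the same embedding from Bazarkhanov, Galeev, Romanyuk and Nguyen--Sickel \cite{KiSi}. In every regime except case (i) of the theorem this already produces the matching upper bound, because $\alpha$ and $\beta$ coincide with the corresponding exponents for those larger $s$-numbers.

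The main obstacle is case (i), where $\beta=0$ while the Kolmogorov numbers of the embedding carry a genuine logarithmic factor; here the crude bound $b_n\le c_n$ overshoots. In that regime I would work directly at the sequence-space level: given any $(n+1)$-dimensional subspace $L_{n+1}\subset s^t_{p_1,p_1}b$, I would decompose its elements according to dyadic levels, show that the weighted norm forces most energy to live on few blocks, and then apply the finite-dimensional Bernstein estimate $b_n(id_{p_1,p_2}^m)\lesssim 1$ on each such block combined with the smoothness weight $2^{-|\bar j|_1 t}$ to absorb the logarithm. A closely related delicate point is the transition across $t=1/p_1$ inside case (ii) and between (ii) and (v), where an interpolation argument between the small- and large-smoothness constructions is needed, analogous to the split between (i) and (ii) of Theorem \ref{main1}.
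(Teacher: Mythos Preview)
Your reduction to sequence spaces and your lower-bound strategy via finite-dimensional blocks are essentially what the paper does. The real gap is in the upper bound: you have misidentified which case is problematic.

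Case (i) is \emph{not} the exception. For $p_1,p_2\le 2$ and $t<\tfrac{1}{p_1}-\tfrac12$ the Weyl numbers already satisfy $x_n\asymp n^{-t}$ with no logarithm (Proposition~\ref{weyl2}(i)), so the Pietsch-type comparison of Corollary~\ref{bern-weyl0} yields $b_n\lesssim n^{-t}$ directly. Your proposed direct subspace argument there is unnecessary.

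The sub-case that genuinely escapes all $s$-number comparisons is (ii) with $p_1\le 2<p_2$. There the Weyl numbers behave like $n^{-(t-1/2+1/p_2)}(\log n)^{(d-1)(t+1/p_2-1/p_1)}$ (Proposition~\ref{weyl2}(iii)), which is strictly larger than the required $n^{-t}(\log n)^{(d-1)(t+1/2-1/p_1)}$; Gelfand and Kolmogorov numbers overshoot as well. This is precisely region~II of Figure~1, where $b_n/x_n\to 0$, so no classical $s$-number can close the gap.

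The paper's remedy is to invoke $b_n\le\delta_n$ for the \emph{nonlinear manifold width} (Proposition~\ref{bern-width}) and then to establish, in Subsection~6.2, the matching upper bound $w_n(U(s^{t,\Omega}_{p_1,p_1}b),s^{0,\Omega}_{p_2,2}f)\lesssim n^{-t}(\log n)^{(d-1)(t-1/p_1+1/2)}$ (Theorem~\ref{width2}). That bound requires a genuine nonlinear-approximation construction: a continuous coefficient-thresholding operator retaining roughly $2^J J^{d-1}$ nonzero entries while controlling the $s^{0,\Omega}_{p_2,2}f$-error (Proposition~\ref{non2}), followed by an Alexandroff-complex argument. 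Your proposal contains no such ingredient, and the ``decompose any subspace by dyadic levels'' idea you sketch for case (i) would not supply it, since in the regime $p_1<2<p_2$ the sharp rate comes precisely from allowing the retained coefficients to depend nonlinearly on the data.
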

\begin{remark}
\rm 
{\rm (i)} We are not aware of any other result concerning the behaviour of Bernstein numbers in the situation of low smoothness. Theorems \ref{main1} and  \ref{main2} gives the final answer about the behaviour of the Bernstein numbers in almost all cases except in limiting cases and the case $2<p_1<p_2<\infty$ of Besov spaces of dominating mixed smoothness. However in this case we can give the estimate from below and above. That is if $2<p_1<p_2<\infty$ then
\beqq
n^{-t}(\log n)^{(d-1)t}\lesssim b_n(id: S_{p_1,p_1}^tB(\Omega)\rightarrow L_{p_2}(\Omega))\lesssim n^{-t}(\log n)^{(d-1)(t-\frac{1}{p_1}+\frac{1}{2})},\ \ n\geq 2.
\eeqq 
{\rm (ii)} Observe that in the case of dominating mixed smoothness the region $1\leq p_1,p_2 \leq 2$ is divided into two regions: low smoothness and high smoothness. \\
{\rm (iii)} For the case $d=1$ the results of the two theorems coincide. This must be the case since $S^t_{p,p}B(0,1)=B^t_{p,p}(0,1)$.
\end{remark}
\begin{remark}\rm It is interesting that our results in Theorem \ref{main1} yield a counterexample for the multiplicativity of Bernstein numbers. Let us consider the commutative diagram

\tikzset{node distance=4cm, auto}
\begin{center}
\begin{tikzpicture}
 \node (H) {$B^{t}_{p_1,q}(\Omega)$};
 \node (L) [right of =H] {$L_2(\Omega)$};
 \node (L2) [right of =H, below of =H, node distance = 2cm ] {$L_\infty(\Omega)$};
 \draw[->] (H) to node {$id_1$} (L);
 \draw[->] (H) to node [swap] {$id$} (L2);
 \draw[->] (L2) to node [swap] {$id_2$} (L);
 \end{tikzpicture}
\end{center}
here $2\leq p_1<\infty$ and $t>\frac{1}{p_1}.$ From 
$$b_{2n}(id_1)\asymp n^{-\frac{t}{d}-\frac{1}{2}+\frac{1}{p_1}},\qquad   b_{n}(id)\asymp n^{-\frac{t}{d}}, $$
see Theorem \ref{main1}, and $b_n(id_2)=(n+1)^{-\frac{1}{2}}$, see \cite{OM} we find
\[
b_{2n} (id_1)
\asymp n^{\frac{1}{p_1}}b_n (id)b_n(id_2)\, , \ \  n\in \mathbb{N} . 
\]
\end{remark}
\begin{remark}\rm It is proved in \cite{Tr70,KiSi} that Kolmogorov, Gelfand and Weyl numbers have the interpolation properties, see Section 3 for definition of these numbers. In this paper we cannot give the complete answer about the interpolation properties of Bernstein numbers. However we can show that Bernstein numbers do not possess interpolation properties concerning with the original spaces. For the basics in interpolation theory we refer to \cite{Tr78}. By $[X_1,X_2]_{\theta}$, $\theta\in (0,1)$, we denote the classical complex method of Calder\'on, here $X_1$ and $X_2$ are Banach spaces. Let $2\leq p_0<p<p_1< \infty$, $1\leq q< \infty$ and $t\in \re$ such that $\frac{1}{p}=\frac{1-\theta}{p_0}+\frac{\theta}{p_1}.$ Then
\beqq
[B^t_{p_0,q}(\Omega),B^t_{p_1,q}(\Omega)]_{\theta}=B^t_{p,q}(\Omega),
\eeqq
see \cite[1.11.8]{Tr06}. However, from Theorem \ref{main1} we have
\beqq
b_{2n-1}(id&&\hspace{-8mm}:B^t_{p,q}(\Omega)\to L_p(\Omega))\\ &\asymp& n^{(\frac{1}{p}-\frac{1}{p_1})\theta}b_{n}^{1-\theta}(id: B^t_{p_0,q}(\Omega)\to L_p(\Omega)) b_{n}^{\theta}(id: B^t_{p_1,q}(\Omega)\to L_p(\Omega)),\  \ n\in \N,
\eeqq
if $t$ large enough.
\end{remark}
We comment on our proof. Concerning the estimate from above, we wish to mention that the standard argument used to estimate $s$-numbers is given by their additivity. 
It consists in a splitting of the identity into a series of finite dimensional operators and reducing the task to the numbers $s_n(id: \ell_{p_1}^m\to \ell_{p_2}^m)$. However, Bernstein numbers fail to be additive so the method does not carry over the present situation. Also the fact that Bernstein numbers are dominated by approximation, Kolmogorov and Gelfand numbers does not help here since it is too rough in general. In this paper we shall use that Weyl numbers and certain nonlinear widths are appropriate to estimate the upper bounds of Bernstein numbers. The estimate from below can be carried out in accordance with the same pattern as in \cite{Lub} and \cite{KiSi}.
\subsection*{Comparison with Weyl numbers}
\begin{definition}
Let $X,Y$ be two Banach spaces and $T\in \mathcal{L}(X,Y)$. The $n$-th Weyl number of $T$ is defined as
$$ x_n(T):=\sup\{a_n(TA):\ A\in \mathcal L(\ell_2,X),\ \|A\|\leq 1\}, $$
where $a_n(TA)$ is the $n$-th approximation number of operator $TA$, see Section \ref{sec3}.
\end{definition}
In general, Bernstein and Weyl numbers are incomparable, see Section \ref{sec3}. The asymptotic behaviour of Weyl numbers in case  of isotropic Besov spaces were investigated in \cite{Cae1,Cae2,Koe,Lub}. In the situation of dominating mixed smoothness we refer to Nguyen and Sickel \cite{KiSi}. To compare Bernstein and Weyl numbers we shall use an $(1/p_1; 1/p_2)$-plane.

\begin{minipage}[t]{7cm}
$$
\begin{tikzpicture}
\fill (0,0) circle (1.5pt);
\draw[->, ](0,0) -- (7,0);
\draw[->, ] (0,0) -- (0,6.8);
\draw (3,0)-- (3,6);
\draw (0,0) -- (3,3);
\draw (0,3) -- (6,3);
\draw (0,6) -- (6,6);
\draw (6,0) -- (6,6);
\node[below] at (0,0) {$0$};
\node [below] at (3,0) {$\frac{1}{2}$};
\node [left] at (0,3) {$\frac{1}{2}$};
\node [left] at (0,6) {$ 1$};
\node [left] at (0,6.6) {$\frac{1}{p_2}$};
\node [below] at (6,0) {$1$};
\draw (3, -0.05) -- (3, 0.05);
\node [below] at (7,0) {$\frac{1}{p_1}$};
\node [] at (4.5,4) {$ b_n \asymp x_n$};
\node [] at (4.5,5) {$I $};
\node [] at (4.5,1) {$\lim\limits_{n \to \infty}\, \frac{b_n}{x_n} = 0 $};
\node [] at (4.5,2) {$II$};
\node [] at (1.5,5) {$V$};
\node [] at (1.5,4) {$b_n\asymp x_n $};
\node [right] at (2,3/2) {$III$};
\node [] at (1.8,0.4) {$\lim\limits_{n \to \infty}\, \frac{b_n}{x_n} = 0 $};
\node [above] at (1,2.2) {$IV$};
\node [] at (1,2) {$b_n\asymp x_n $};
\node [right] at (2.5,-1) {Figure 1};
\end{tikzpicture}
$$
\end{minipage}
\hfill
\begin{minipage}[t]{5.5cm}
{~}\\ 
By $b_n$ and $x_n$ we denote the $n$-th Bernstein number and the $n$-th Weyl number of the embedding $id: X(\Omega)\to L_{p_2}(\Omega)$ respectively; here $X(\Omega)$ is either $S_{p_1,p_1}^tB(\Omega)$ or $B_{p_1,q}^t(\Omega)$. Figure 1 explains the different behaviour of Bernstein and Weyl numbers. In our particular situation Bernstein numbers are dominated by Weyl numbers. There is no difference between the situation of isotropic and non-isotropic smoothness.
\end{minipage}
\section{Bernstein numbers for embeddings of sequence spaces and relations with $s$-number}\label{sec3}
As explained above, Bernstein numbers are not $s$-numbers but they are closely related to $s$-numbers. In this section, we will discuss these relations. First, note that if $X,Y$ are Hilbert spaces and $T\in \mathcal{L}(X,Y)$ we have
\beq
b_n(T)=s_n(T),\ \ n\in\mathbb{N},\label{key1}
\eeq
see \cite[Theorem 11.3.4]{Pi1} or \cite[Proposition 1.6]{Lub}. Now we recall the definitions of some other $s$-numbers, see \cite[Chapter 2]{Pi21}. Let $X$ and $Y$ be two Banach spaces and $T\in \mathcal{L}(X,Y)$. Then
\begin{enumerate}
\item The $n$-th approximation number of $T$ is defined as
$$ a_n(T):=\inf\{\|T-A\|: \ A\in \mathcal L(X,Y),\ \ \text{rank} (A)<n\}. $$
\item The $n$-th Kolmogorov number $T$ is defined as
$$ d_n(T):=\inf\{\|Q_NT \|:\dim(N)<n\},$$
where $N$ is a subspace of $Y$ and $Q_N$ is a canonical surjection from $Y$ onto the quotation space $Y/N$.
\item The $n$-th Gelfand number of $T$ is defined as
$$ c_n(T):=\inf\{\|TJ_M\|,\ \ \text{codim}(M)< n\},$$
where $M$ is a subspace of $X$ and $J_M$ is a canonical injection from $M$ into $X$.
\end{enumerate}
For definitions of Weyl numbers, see Section \ref{sec2}. We have the relations
$$ b_n(T)\leq c_n(T), d_n(T)\leq a_n(T),\ \ n\in \mathbb{N},$$
see \cite{Pie4}. Concerning Weyl and Bernstein numbers, an inequality of type either $b_n(T)\leq cx_n(T)$ or $x_n(T)\leq cb_n(T)$ cannot hold. For instance, consider $b_n(id: \ell_1^n\to \ell_1^n)=1$, $x_n(id: \ell_1^n\to \ell_1^n)=\frac{1}{\sqrt{n}}$ or $b_n(id: \ell_1\to c_0)=\frac{1}{n}$, $x_n(id: \ell_1\to c_0)=1$, see \cite{Pie2} and the references given there. However, the inequality $b_n(T)\leq cx_n(T)$ holds true under some additional restrictions. Therefore we shall need the following lemma of Pietsch \cite{Pie2}.
\begin{lemma}Let $X,Y$ be two Banach spaces and $T\in\mathcal{L}(X,Y)$. Then
\be\label{bern1} b_{2n-1}(T)\leq e\Big(\prod_{k=1}^{n}x_k(T)\Big)^{\frac{1}{n}},\ \ n\in \N.\ee
\end{lemma}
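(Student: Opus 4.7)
The plan is to unfold the definition of $b_{2n-1}(T)$, factor the restriction of $T$ through a Hilbert space of dimension $2n-1$, and then play the lower bound provided by the Bernstein subspace against the upper bound provided by the Weyl numbers via the approximation numbers of the factored operator. Fix $\varepsilon>0$ and choose a subspace $L\subset X$ with $\dim L=2n-1$ for which $\|Tx\|_Y\ge (1-\varepsilon)\,b_{2n-1}(T)\,\|x\|_X$ for every nonzero $x\in L$.

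Next, produce an isomorphism $A\colon \ell_2^{2n-1}\to L\subset X$ with $\|A\|\le 1$ and controlled $\|A^{-1}\|$ via John's ellipsoid (or Lewis's refinement). Set $S:=TA\colon \ell_2^{2n-1}\to Y$. Two complementary bounds are now available. Since $S$ starts from a Hilbert space one has $x_k(S)=a_k(S)$, and the ideal property of Weyl numbers together with $\|A\|\le 1$ gives
\[
a_k(S)\le x_k(T),\qquad k\in\N.
\]
Conversely, the Bernstein-type lower bound transfers to $\|Sy\|_Y\ge c\,\|y\|_2$ for every $y\in\ell_2^{2n-1}$, where $c=(1-\varepsilon)\,b_{2n-1}(T)/\|A^{-1}\|$.

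The lower bound on $S$ propagates to all approximation numbers up to index $2n-1$: if $B\in\mathcal{L}(\ell_2^{2n-1},Y)$ has rank $<k\le 2n-1$, then $\ker B\subset\ell_2^{2n-1}$ is nontrivial, so $\|S-B\|\ge c$, and therefore $a_k(S)\ge c$ for $k=1,\dots,2n-1$. Combining the two estimates and taking the geometric mean of the first $n$ approximation numbers yields
\[
c\;\le\;\Big(\prod_{k=1}^{n} a_k(S)\Big)^{\!1/n}\;\le\;\Big(\prod_{k=1}^{n} x_k(T)\Big)^{\!1/n},
\]
which, after rearranging and letting $\varepsilon\downarrow 0$, gives the estimate
\[
b_{2n-1}(T)\;\le\;\|A^{-1}\|\cdot\Big(\prod_{k=1}^{n} x_k(T)\Big)^{\!1/n}.
\]

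The main obstacle is the constant. A direct application of John's theorem only delivers $\|A^{-1}\|\le \sqrt{2n-1}$, which is polynomial in $n$ rather than the absolute constant $e$ claimed in the lemma. To recover $e$ one has to be more careful at the factorisation step: rather than bounding the operator norm of $A^{-1}$, one controls the geometric mean of its first $n$ singular numbers through a volumetric/contact-point argument, and Stirling's formula then converts the resulting factor $n!/n^n$ into $e^{-n}$. This is the delicate point of Pietsch's argument; once it is in place, the inequality stated in the lemma follows.
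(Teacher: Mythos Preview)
The paper does not prove this lemma at all; it is simply quoted from Pietsch \cite{Pie2}. Your sketch therefore goes well beyond what the paper offers. The skeleton you lay out---pick a near-optimal $(2n-1)$-dimensional subspace $L$, factor through $\ell_2^{2n-1}$ via an isomorphism $A$ with $\|A\|\le 1$, use that Weyl numbers from a Hilbert domain coincide with approximation numbers so that $a_k(TA)\le x_k(T)$, and extract a uniform lower bound $a_k(TA)\ge c$ for $k\le 2n-1$ from injectivity on the finite-dimensional domain---is correct and is indeed the backbone of Pietsch's argument.

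You are also right that the whole difficulty sits in the constant: John's theorem gives only $\|A^{-1}\|\le\sqrt{2n-1}$, which is useless here. Your description of the remedy is in the right spirit (one must control a \emph{product} of $s$-numbers of the factorisation rather than a single operator norm, and the constant $e$ does ultimately come from $n^n/n!\sim e^n$ via Stirling, just as in the Weyl eigenvalue inequality $(\prod_{k=1}^n|\lambda_k|)^{1/n}\le e(\prod_{k=1}^n x_k)^{1/n}$). A small imprecision: in the route you outline, what one actually needs to bound is $\prod_{j=n}^{2n-1}a_j(A^{-1})$ (the \emph{last} $n$ approximation numbers, arising from $a_k(TA)\,a_{2n-k}((TA)^{-1})\ge 1$), not the first $n$. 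In any case you do not carry this step out, and you explicitly defer to Pietsch for it---which is exactly what the paper does. So as a self-contained proof your write-up has a genuine gap at the sharp-constant step, but as a treatment of the lemma it is strictly more informative than the paper's bare citation.
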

\begin{corollary}
\label{bern-weyl0}
Let $X,Y$ be two Banach spaces, $T \in\mathcal{L}(X,Y)$ and $\alpha,\beta\in \mathbb{R},\ \beta\geq 0$. Assume that $x_n(T)\asymp n^{-\alpha}(\log n)^{\beta}$, $n\geq 2$. Then
$$b_n(T)\lesssim x_n(T) ,$$ 
for all $n\in \mathbb{N}$. Moreover, if $Y$ is a Hilbert space we have $b_n(T)\asymp x_n(T)$.
\end{corollary}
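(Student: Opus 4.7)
The plan is to derive the upper bound $b_n(T)\lesssim x_n(T)$ from the multiplicative inequality (\ref{bern1}) by computing a geometric mean of the Weyl numbers, and then to establish the reverse inequality in the Hilbert-valued case by working directly from the definition of $x_n$.

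For the upper estimate, I would insert the hypothesis $x_k(T)\leq c\,k^{-\alpha}(\log k)^{\beta}$ ($k\geq 2$) into (\ref{bern1}), absorbing the isolated factor $x_1(T)\leq \|T\|$ into the constant. This reduces matters to estimating
\[
\Bigl(\prod_{k=1}^n x_k(T)\Bigr)^{1/n}\lesssim (n!)^{-\alpha/n}\,\Bigl(\prod_{k=2}^n (\log k)^{\beta}\Bigr)^{1/n}.
\]
Stirling's formula yields $(n!)^{1/n}\asymp n$, contributing the factor $n^{-\alpha}$. For the logarithmic piece, a comparison of $\sum_{k=2}^n \log\log k$ with $\int_2^n \log\log x\,dx = n\log\log n + O(n/\log n)$ shows that $\frac{1}{n}\sum_{k=2}^n\log\log k = \log\log n + o(1)$, so the geometric mean is asymptotically equivalent to $(\log n)^{\beta}$. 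Combining both contributions gives $b_{2n-1}(T)\lesssim n^{-\alpha}(\log n)^{\beta}\asymp x_{2n-1}(T)$, and the monotonicity $b_{2n}(T)\leq b_{2n-1}(T)$ together with $x_{2n-1}(T)\asymp x_{2n}(T)$ extends the bound to all indices.

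For the Hilbert-space assertion, I would argue directly from the definition of the Weyl numbers. Fix $A\in\mathcal{L}(\ell_2,X)$ with $\|A\|\leq 1$; since $Y$ is a Hilbert space, the composition $TA:\ell_2\to Y$ acts between Hilbert spaces, so (\ref{key1}) applies to give $a_n(TA)=b_n(TA)$. The ideal property (s3), which Bernstein numbers do satisfy, yields $b_n(TA)\leq b_n(T)\|A\|\leq b_n(T)$. Taking the supremum over all admissible $A$ produces $x_n(T)\leq b_n(T)$, and combining this with the first part gives $b_n(T)\asymp x_n(T)$.

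The principal technical point is the asymptotic analysis of the logarithmic geometric mean: one must control the remainder in $\sum_{k=2}^n\log\log k$ sharply enough to conclude that the exponent of $\log n$ in the resulting bound is precisely $\beta$, and not $\beta$ shifted by a logarithmically small correction. Everything else is a short bookkeeping exercise built on inequalities already collected in the paper.
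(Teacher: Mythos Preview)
Your proof is correct and follows essentially the same route as the paper's: apply Pietsch's inequality (\ref{bern1}), estimate the geometric mean via Stirling, and in the Hilbert-range case invoke (\ref{key1}) together with (s3). Your treatment of the logarithmic factor is more elaborate than necessary---since $\beta\ge 0$, the trivial bound $\log k\le \log n$ already gives $\bigl(\prod_{k=2}^n(\log k)^{\beta}\bigr)^{1/n}\le (\log n)^{\beta}$---but the argument is sound.
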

\begin{proof}
The inequality of Pietsch and our assumption $x_n(T)\asymp n^{-\alpha}(\log n)^{\beta} $ yield
$$b_{2n-1}(T)\leq e\Big(\prod_{k=1}^{n}x_k(T)\Big)^{\frac{1}{n}}\lesssim \Big(\prod_{k=2}^{n}k^{-\alpha}(\log k)^{\beta}\Big)^{\frac{1}{n}}\lesssim (\sqrt[n]{n!})^{-\alpha} (\log n)^{\beta} .$$
From the well-known limit 
$$\lim_{n\to \infty}\frac{n}{\sqrt[n]{n!}}=e$$
we obtain
$$b_{2n-1}(T)\lesssim n^{-\alpha}(\log n)^{\beta} .$$
Now, if $Y$ is a Hilbert space and  $A\in \mathcal L(\ell_2,X)$, $\|A\|\leq 1$, property (s3) and (\ref{key1}) yield
$$a_n(TA)= b_n(TA)\leq b_n(T)\|A\|\leq b_n(T).$$
From the definition of Weyl numbers we conclude
$$x_n(T)\leq b_n(T).$$ 
The proof is complete.
\end{proof}
\begin{remark}\rm
The converse inequality of (\ref{bern1}) $x_{2n-1}(T)\leq c\Big(\prod_{k=1}^{n}b_k(T)\Big)^{\frac{1}{n}}$ fails to hold for any constant $c>0$, see \cite{Pie2}.
\end{remark}
We proceed with the relations between Bernstein and Gelfand numbers. The following result was proved by Pietsch \cite{Pie4}.
\begin{lemma}\label{bern-gel}
Let $\dim(X)=\dim(Y)=m$ and $T\in \mathcal{L}(X,Y)$. If $T$ is invertible then
$$b_n(T)c_{m-n+1}(T^{-1})=1,$$
for $n\in \mathbb{N}$, $1\leq n\leq m.$ 
\end{lemma}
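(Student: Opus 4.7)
The plan is to transport the supremum defining $b_n(T)$ into the infimum defining $c_{m-n+1}(T^{-1})$, using the invertibility of $T$ to set up a bijection between admissible subspaces.

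First, I would unpack the Gelfand number side. The constraint $\text{codim}(M) < m-n+1$ on a subspace $M \subset Y$ is equivalent to $\dim M \geq n$. For any such $M$, setting $L := T^{-1}(M) \subset X$ and substituting $x = T^{-1}y$ gives
$$\|T^{-1} J_M\| = \sup_{y \in M \setminus \{0\}} \frac{\|T^{-1} y\|}{\|y\|} = \sup_{x \in L \setminus \{0\}} \frac{\|x\|}{\|Tx\|} = \Bigl(\inf_{x \in L \setminus \{0\}} \frac{\|Tx\|}{\|x\|}\Bigr)^{-1}.$$
Because $T$ is bijective, $\dim L = \dim M$, and the map $M \mapsto T^{-1}(M)$ is itself a bijection between subspaces of $Y$ and of $X$ of the same dimension. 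Taking the infimum over admissible $M$ and inverting, I obtain
$$\frac{1}{c_{m-n+1}(T^{-1})} = \sup_{\dim L \geq n} \inf_{x \in L \setminus \{0\}} \frac{\|Tx\|}{\|x\|}.$$

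Second, I would reduce the supremum over $\dim L \geq n$ to the supremum over $\dim L = n$. For any $L$ with $\dim L \geq n$ and any $n$-dimensional subspace $L' \subset L$, the inner infimum satisfies $\inf_{x \in L' \setminus \{0\}} \|Tx\|/\|x\| \geq \inf_{x \in L \setminus \{0\}} \|Tx\|/\|x\|$, so passing to $L'$ can only increase the relevant quantity. Consequently
$$\sup_{\dim L \geq n} \inf_{x \in L \setminus \{0\}} \frac{\|Tx\|}{\|x\|} = \sup_{\dim L = n} \inf_{x \in L \setminus \{0\}} \frac{\|Tx\|}{\|x\|} = b_n(T),$$
which yields the identity $b_n(T) \cdot c_{m-n+1}(T^{-1}) = 1$.

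I do not expect a substantive obstacle. The one point requiring care is justifying the reciprocal in the first display, which needs $\inf_{x \in L \setminus \{0\}} \|Tx\|/\|x\| > 0$; this is automatic because $T$ is invertible on a finite-dimensional space, so $\|Tx\| \geq \|T^{-1}\|^{-1} \|x\|$ for every $x \in X$. The entire argument is essentially duality bookkeeping made possible by the bijectivity of $T$ and the finite-dimensional setting, where every infimum is attained on the unit sphere.
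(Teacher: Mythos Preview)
Your argument is correct. The paper does not actually prove this lemma; it simply attributes the result to Pietsch \cite{Pie4} and states it without proof. What you have written is precisely the standard duality bookkeeping that establishes the identity, and every step is sound: the translation of the codimension constraint into $\dim M\ge n$, the substitution $x=T^{-1}y$ turning $\|T^{-1}J_M\|$ into the reciprocal of $\inf_{x\in L}\|Tx\|/\|x\|$, and the reduction from $\dim L\ge n$ to $\dim L=n$ via monotonicity of the inner infimum. The positivity remark guaranteeing the reciprocal makes sense is exactly the point that needs to be checked, and you handle it correctly.
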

The behaviour of the Gelfand numbers of  embeddings $\ell_{p_1}^m\to \ell_{p_2}^m$ is known in literature. Here we collect some results  on $c_n(id: \ell_{p_1}^m\to \ell_{p_2}^m)$; cf. \cite{Glus1,Lub,Vy2}.

\begin{lemma}\label{gelfand}
 Let $n,m\in \mathbb{N}$.
\begin{enumerate}
\item If $1\leq p_1,p_2\leq \infty$, then
$$c_n(id_{p_1,p_2}^{2n})\asymp
\begin{cases}
1 &\text{if }\ 2\leq p_1\leq p_2,\\
n^{\frac{1}{2}-\frac{1}{p_1}}  &\text{if }\  p_1\leq 2\leq p_2,\\
n^{\frac{1}{p_2}-\frac{1}{p_1}} &\text{if }\ p_2\leq p_1\ \text{  or  }\  p_1\leq p_2\leq 2.\\
\end{cases}
$$
\item If $1< p_1$ and $\max(p_1,2)\leq p_2\leq \infty $, then
$$c_{m-n}(id_{p_1,p_2}^{m})\asymp m^{\frac{1}{p_2}-\frac{1}{p_1}},\ \ 1\leq n\leq m^{\frac{2}{p_2}} .$$
\end{enumerate}
\end{lemma}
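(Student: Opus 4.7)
The plan is to reduce both parts to well-established finite-dimensional results on Gelfand (and, via duality, Kolmogorov) numbers of the canonical embeddings $\ell_{p_1}^m \hookrightarrow \ell_{p_2}^m$. The two basic ingredients I would invoke are: the ideal property (s3) of $s$-numbers, which turns factorisations through $\ell_2$ into upper bounds, and the finite-dimensional duality relation $c_n(T)=d_n(T')$, which allows one to swap the roles of $(p_1,p_2)$ and $(p_2',p_1')$.

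For part (i) I would proceed case by case. When $p_2\le p_1$ the norm itself satisfies $\|id_{p_1,p_2}^{2n}\|=(2n)^{1/p_2-1/p_1}\asymp n^{1/p_2-1/p_1}$, giving the upper bound; the matching lower bound I would obtain from (s3) applied to the factorisation $\ell_\infty^{2n}\to \ell_{p_1}^{2n}\to \ell_{p_2}^{2n}\to \ell_1^{2n}$ together with the classical estimate $c_n(id_{\infty,1}^{2n})\asymp n$. The case $p_1\le p_2\le 2$ I would dispatch by duality, reducing to the Kolmogorov-number estimate $d_n(id_{p_2',p_1'}^{2n})$ in the range $2\le p_2'\le p_1'$, which is the analogue of the previous case. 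In the case $p_1\le 2\le p_2$ I would factorise through $\ell_2$: (s3) gives
\[
c_n(id_{p_1,p_2}^{2n}) \le \|id_{2,p_2}^{2n}\|\, c_n(id_{p_1,2}^{2n}) = c_n(id_{p_1,2}^{2n}) \asymp n^{1/2-1/p_1},
\]
the last equivalence being the Kashin--Gluskin estimate. Finally, in the case $2\le p_1\le p_2$ the upper bound $c_n\le \|id\|=1$ is immediate; the lower bound $c_n\gtrsim 1$ reduces by duality to a Kolmogorov-number bound in the range $p_2',p_1'\le 2$, which again is known to be of order one.

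For part (ii) the same circle of ideas is applied in the ``high-codimension'' regime $1\le n\le m^{2/p_2}$. The upper bound is obtained by exhibiting, via a probabilistic Gluskin-type construction, a subspace $M\subset \ell_{p_1}^m$ with $\dim M > n$ on which the $\ell_{p_2}$-norm is comparable to $m^{1/p_2-1/p_1}$ times the $\ell_{p_1}$-norm; the matching lower bound is the sharp Gluskin estimate for the Gelfand widths of $\ell_{p_1}^m$-balls measured in $\ell_{p_2}^m$, valid exactly in the window $n\le m^{2/p_2}$.

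The main obstacle, unsurprisingly, lies in the sharp lower bounds in the Kashin/Gluskin regime (i.e.\ $p_1\le 2\le p_2$ and $2\le p_1\le p_2$ in part (i), and all of part (ii)): these cannot be read off from monotonicity or norm-factorisation but require volume estimates combined with a probabilistic selection of a random subspace of appropriate codimension, which is precisely the content of the cited works \cite{Glus1,Lub,Vy2}.
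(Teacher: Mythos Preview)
The paper itself does not prove this lemma; it is quoted as a known finite-dimensional result with a blanket reference to \cite{Glus1,Lub,Vy2}, so there is no argument in the paper to compare yours against. Your outline is essentially the standard route taken in those sources: elementary norm computations and the duality $c_n(T)=d_n(T')$ handle the ``monotone'' cases $p_2\le p_1$ and $p_1\le p_2\le 2$, the cases crossing $2$ are treated by factorisation through $\ell_2$ together with the Kashin--Gluskin estimates, and part (ii) rests on the Gluskin random-subspace construction.

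One minor sharpening concerning part (ii): the lower bound $c_{m-n}(id_{p_1,p_2}^m)\ge m^{1/p_2-1/p_1}$ is actually elementary and does not require any Gluskin-type input. By monotonicity $c_{m-n}\ge c_m$, and for an invertible map on an $m$-dimensional space one has
\[
c_m(id_{p_1,p_2}^m)=\inf_{x\ne 0}\frac{\|x\|_{p_2}}{\|x\|_{p_1}}=m^{1/p_2-1/p_1},
\]
the last equality being H\"older with equality at the constant vector. The genuinely nontrivial direction in (ii) is only the \emph{upper} bound, where one must exhibit a subspace of dimension exceeding $n$ (with $n$ ranging up to $m^{2/p_2}$) on which $\|x\|_{p_2}\lesssim m^{1/p_2-1/p_1}\|x\|_{p_1}$ uniformly; you already attribute that direction correctly to the probabilistic construction.
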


As a consequence of Lemmas \ref{bern-gel} and \ref{gelfand} we obtain the following.
\begin{corollary}\label{ber}
 Let $n,m\in \mathbb{N}$.
\begin{enumerate}
\item Let $1\leq p_1,p_2\leq \infty$. It holds
\begin{subnumcases}{b_n(id_{p_1,p_2}^{2n}) \gtrsim}
1&\text{ if } $2\leq p_2\leq p_1$,\label{th1} \\
n^{\frac{1}{p_2}-\frac{1}{2}}&\text{ if } $ p_2\leq 2\leq p_1$,\label{th2} \\
n^{\frac{1}{p_2}-\frac{1}{p_1}}&\text{ if } $ p_1\leq p_2 \ \text{ or }\ p_2\leq p_1\leq 2$.\label{th3}
\end{subnumcases}
\item Let $1< p_2$ and $\max(p_2,2)\leq p_1\leq \infty$. It holds
\be\label{th4}
 b_n(id_{p_1,p_2}^m)\gtrsim m^{\frac{1}{p_2}-\frac{1}{p_1}},\ \ 1\leq n\leq m^{\frac{2}{p_1}}.\ee
\end{enumerate}

\end{corollary}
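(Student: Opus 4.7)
The plan is to combine Lemma~\ref{bern-gel}---the duality $b_n(T)\cdot c_{m-n+1}(T^{-1})=1$ valid whenever $T$ is invertible between two $m$-dimensional Banach spaces---with the Gelfand-number asymptotics from Lemma~\ref{gelfand}. Since the inverse of $id_{p_1,p_2}^{m}$ is exactly $id_{p_2,p_1}^{m}$, any upper bound on a Gelfand number of the latter translates, after taking reciprocals, into a lower bound on the corresponding Bernstein number of the former.

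For part (i) I set $m=2n$ in Lemma~\ref{bern-gel} and use monotonicity of Gelfand numbers to obtain
\[
b_n(id_{p_1,p_2}^{2n}) \;=\; \frac{1}{c_{n+1}(id_{p_2,p_1}^{2n})} \;\geq\; \frac{1}{c_n(id_{p_2,p_1}^{2n})}\, .
\]
Each of the cases (\ref{th1})--(\ref{th3}) is then a direct read-off of Lemma~\ref{gelfand}(i) applied to the identity $id_{p_2,p_1}^{2n}$ (i.e.\ with the roles of $p_1$ and $p_2$ interchanged): $2\le p_2\le p_1$ falls under the first alternative and yields $c_n\asymp 1$, giving (\ref{th1}); $p_2\le 2\le p_1$ falls under the second and yields $c_n\asymp n^{1/2-1/p_2}$, giving (\ref{th2}); the remaining conditions $p_1\le p_2$ or $p_2\le p_1\le 2$ fall under the third and yield $c_n\asymp n^{1/p_1-1/p_2}$, giving (\ref{th3}).

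For part (ii) I apply Lemma~\ref{bern-gel} directly to get
\[
b_n(id_{p_1,p_2}^{m}) \;=\; \frac{1}{c_{m-n+1}(id_{p_2,p_1}^{m})}\, .
\]
Under the hypotheses $1<p_2$ and $\max(p_2,2)\le p_1$, Lemma~\ref{gelfand}(ii) applied with $p_1$ and $p_2$ swapped gives $c_{m-k}(id_{p_2,p_1}^{m})\asymp m^{1/p_1-1/p_2}$ for $1\le k\le m^{2/p_1}$. Taking $k=n-1$ is admissible whenever $2\le n\le m^{2/p_1}+1$, which covers the announced range $1\le n\le m^{2/p_1}$ except the boundary value $n=1$. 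For $n=1$ I argue separately: one has $b_1(T)=\|T\|$ for any operator $T$, and since $p_2\le p_1$ the norm $\|id_{p_1,p_2}^{m}\|=m^{1/p_2-1/p_1}$ is attained at the vector $(1,\dots,1)$, matching the required bound (\ref{th4}).

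I do not expect any genuine obstacle; the argument is essentially mechanical once Lemma~\ref{bern-gel} is in hand. The only places demanding care are the index bookkeeping---the shift from $n$ to $n+1$ in part (i), absorbed by monotonicity of the Gelfand numbers, and the isolated endpoint $n=1$ in part (ii), handled by identifying $b_1$ with the operator norm.
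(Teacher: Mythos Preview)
Your proposal is correct and takes essentially the same approach as the paper, which simply states the corollary as a direct consequence of Lemmas~\ref{bern-gel} and~\ref{gelfand}; you have merely spelled out the index bookkeeping (the shift via monotonicity in part~(i) and the endpoint $n=1$ in part~(ii)) that the paper leaves implicit.
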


\begin{remark}\rm Pukhov \cite{Pu1} proved that for $n\in \mathbb{N}$ and $1\leq p_1, p_2\leq \infty$ it holds
$$ b_n(id_{p_1,p_2}^{2n})=\dfrac{1}{d_n(id_{p_1',p_2'}^{2n})}, \qquad \ p_1'=\dfrac{p_1}{p_1-1},\ \ p_2'=\dfrac{p_2}{p_2-1}. $$
Combined with the well-known  estimate for the Kolmogorov numbers $d_n(id_{p_1,p_2}^m)$, see \cite{Glus1} or \cite{Vy2}, we also obtain Corollary \ref{ber} (i).
\end{remark}

\section{Nonlinear widths and Bernstein numbers}\label{sec4}
In this section we will recall several definitions of nonlinear widths which have been studied over the last decades. Afterwards we will derive comparisons between them and Bernstein numbers, we refer to \cite{Devo2, Devo1, Dung, Dung2, Dung1}. First, we would like to recall the well-known Alexandroff $n$-width. We start with the definition of a complex, see \cite[Chapter IV]{Alek} or \cite{Devo1}. Let $\{x_1,...,x_N\}$ be a set of points in a normed linear space $X$ and let $\mathcal{T}$ be a collection of subsets of $\mathcal{N}=\{1,...,N\}$ satisfying the following conditions
\begin{enumerate}
\item $T\in \mathcal{T}$ implies that $\{x_j\}_{j\in T}$ are in general position. A set of points $\{x_0,x_1,...,x_n\}$ is called in general position if the system $\{x_1-x_0,...,x_n-x_0\}$ is linearly independent.
\item If $T'\subset T$ and $T\in \mathcal{T}$, then $T'\in \mathcal{T}$.
\item For each $T\in \mathcal{T}$, we denote the simplex associated to $T$ by $\sigma_{T}=\text{conv}\{x_j;j\in T\}$. If $T_1,T_2\in \mathcal{T}$ there exists an $I\in \mathcal{T}$ such that $\sigma_{T_1}\cap \sigma_{T_2}=\sigma_{I}$.
\end{enumerate}
Then $\mathscr{C}=\mathscr{C}(x_1,...,x_n;\mathcal{T})=\cup_{T\in \mathcal{T}}\sigma_{T}$ is called a complex in $X$. The dimension of $\mathscr{C}$ is defined by $\text{dim }\mathscr{C}:=\max\{|T|-1:T\in \mathcal{T}\}$. For a compact subset $K$ of $X$, the Alexandroff nonlinear $n$-width is defined as follows
\begin{equation}\label{lin1}
a_n(K,X)=\inf_{\text{dim } \mathscr{C}=n}\inf_{F\in C(K,\mathscr{C})}\sup_{x\in K}\|x-F(x)|X \|,
\end{equation}
where $C(K,\mathscr{C})$ is the set of all continuous functions from $K$ into $\mathscr{C}$ and the outer infimum is taken over all $n$-dimensional complexes $\mathscr{C}$ in $X$. We also have the dual notion of $n$-co-width
\be\label{lin2}
a^n(K|X)=\inf_{\text{dim } \mathscr{C}=n}\inf_{G\in C(K,\mathscr{C})}\sup_{x\in K}\text{diam}(G^{-1}(G(x))).
\ee
Here as usual 
$$\text{diam}(G)=\sup\{\|x-y|X\|:\ x,y\in G\}\ \ \ \text{and}\ \ \ G^{-1}(A)=\{x:\ G(x)\in A\}.$$ 
Note that $\mathscr{C}$ is not necessarily in $X$. The nonlinear manifold $n$-width $\delta_n(K,X)$ is defined by
\be\label{lin4}
\delta_n(K,X)=\inf_{M,a}\sup_{x\in K}\| x-M(a(x))|X \|,
\ee
where the infimum is taken over all continuous mappings $a$ from $K$ into $\mathbb{R}^n$ and $M$ from $\mathbb{R}^n$ into $X$. For other notions of nonlinear widths, see \cite{Devo2, Devo1, Dung, Dung2, Dung1}. It is obvious that the nonlinear $n$-widths introduced in (\ref{lin1})-(\ref{lin4}) are different. However, they share some common properties and are closely related. The following lemma goes back to \cite{Devo1}, see also \cite{Dung,Dung2,Dung1}.
\begin{lemma}\label{equiv}
For any normed linear space $X$ and any compact set $K\subset X$, we have
$$a_n(K,X)\asymp a^n(K|X)\asymp \delta_n(K,X)$$
for all $n\in \mathbb{N}$.
\end{lemma}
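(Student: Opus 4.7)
The plan is to close a cycle of three inequalities:
$$a^n(K|X) \le 2\,a_n(K,X) \lesssim \delta_n(K,X) \lesssim a^n(K|X).$$

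First I would establish the elementary inequality $a^n(K|X) \le 2a_n(K,X)$. Given an $n$-dimensional complex $\mathscr{C}\subset X$ and a continuous $F:K\to\mathscr{C}$ with $\sup_{x\in K}\|x-F(x)|X\|\le\varepsilon$, set $G:=F$. Whenever $y\in G^{-1}(G(x))$ the triangle inequality yields
$$\|x-y|X\|\le\|x-F(x)|X\|+\|F(y)-y|X\|\le 2\varepsilon,$$
so $\text{diam}(G^{-1}(G(x)))\le 2\varepsilon$ for every $x\in K$. Letting $\varepsilon\searrow a_n(K,X)$ gives the claim.

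Next, for $a_n(K,X) \lesssim \delta_n(K,X)$, I would use a discretization argument. Let continuous $a:K\to\mathbb{R}^n$ and $M:\mathbb{R}^n\to X$ almost realize $\delta_n(K,X)$. Since $a(K)$ is compact, it lies in a bounded cube $Q\subset\mathbb{R}^n$; fix a simplicial triangulation $\mathscr{T}_\eta$ of $Q$ with mesh $\eta$ and let $\widetilde M:Q\to X$ be the piecewise-affine interpolant of $M$ at the vertices of $\mathscr{T}_\eta$. Uniform continuity of $M$ on $Q$ yields $\sup_{y\in Q}\|\widetilde M(y)-M(y)|X\|\to 0$ as $\eta\to 0$, so $\widetilde M\circ a$ approximates the identity on $K$ with error close to $\delta_n(K,X)$. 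After an arbitrarily small perturbation of the vertex values to place them in general position, the image $\widetilde M(\mathscr{T}_\eta)$ becomes an $n$-dimensional complex in $X$, and $\widetilde M\circ a$ is an admissible competitor for $a_n(K,X)$.

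The main obstacle is the reverse inequality $\delta_n(K,X)\lesssim a^n(K|X)$. Given an $n$-dimensional complex $\mathscr{C}$ and a continuous $G:K\to\mathscr{C}$ with $\sup_{x\in K}\text{diam}(G^{-1}(G(x)))\le\varepsilon$, the strategy is to build a continuous approximate right inverse $s:\mathscr{C}\to X$ with $\|s(G(x))-x|X\|\lesssim\varepsilon$. After refining the triangulation of $\mathscr{C}$ so that each simplex $\sigma$ has $G^{-1}(\sigma)$ of diameter $O(\varepsilon)$, I would choose a representative $x_v\in G^{-1}(v)$ at every vertex $v$ and define $s$ by barycentric interpolation: on a simplex with vertices $v_0,\dots,v_k$ and barycentric coordinates $\lambda_0,\dots,\lambda_k$, set $s(y):=\sum_i \lambda_i\,x_{v_i}$; continuity is automatic because values on shared faces agree. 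The delicate technical point is then to identify $\mathscr{C}$ with an $n$-dimensional parameter space so that $a:=\iota\circ G$ and $M:=s\circ\iota^{-1}$, suitably extended to all of $\mathbb{R}^n$ by a Tietze-type argument, are admissible competitors for $\delta_n(K,X)$. Since a general $n$-complex need not embed into $\mathbb{R}^n$, I would, following \cite{Devo1}, subdivide $\mathscr{C}$ into pieces that do embed and patch the parameterizations together using a partition of unity, exploiting the compactness of $K$ and the fact that only $\varepsilon$-accurate reconstruction on $G(K)$ is required, so that small gluing inaccuracies away from $G(K)$ are absorbed into the multiplicative constant.
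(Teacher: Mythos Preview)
The paper does not prove this lemma; it simply cites it from DeVore--Kyriazis--Leviatan--Tikhomirov \cite{Devo1} (with further references to Dung \cite{Dung,Dung2,Dung1}). So there is no ``paper's own proof'' to compare against, and your task is really to reproduce an argument from the cited literature.

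Your first inequality $a^n(K|X)\le 2a_n(K,X)$ is correct and standard. Your second step, $a_n(K,X)\lesssim\delta_n(K,X)$ via piecewise-affine interpolation of $M$ on a fine triangulation of a cube containing $a(K)$, is also on the right track, though you should say a word about why perturbing vertex images into general position can be done while keeping the complex structure (the simplicial intersection property (iii) in the definition of a complex).

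The third step contains a real gap. Your plan is to build an approximate section $s:\mathscr C\to X$ and then set $a=\iota\circ G$, $M=s\circ\iota^{-1}$ for some identification $\iota$ of $\mathscr C$ with a subset of $\mathbb R^n$. You correctly observe that a general $n$-complex need not embed in $\mathbb R^n$, but the cure you propose---patching local embeddings $\iota_i$ with a partition of unity---does not work: the resulting map $\iota:\mathscr C\to\mathbb R^n$ will typically fail to be injective, and there is no reason that two points $c_1,c_2\in\mathscr C$ identified by $\iota$ satisfy $\|s(c_1)-s(c_2)|X\|\lesssim\varepsilon$. Hence $M=s\circ\iota^{-1}$ is not well defined (even up to $O(\varepsilon)$), and you have not produced an admissible pair $(a,M)$ for $\delta_n$. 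A secondary issue in the same step: your refinement so that $G^{-1}(\sigma)$ has diameter $O(\varepsilon)$ requires an upper-semicontinuity argument (small fibers $G^{-1}(c)$ do not immediately give small preimages of simplices), and your choice $x_v\in G^{-1}(v)$ may be vacuous since vertices of simplices meeting $G(K)$ need not lie in $G(K)$. Both of these can be fixed, but the embedding/patching problem is the essential obstruction; you should consult \cite{Devo1} or \cite{Dung} for the actual construction that circumvents it.
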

Because of the asymptotic equivalence, in this and following sections, we denote by $w_n$ any of the nonlinear widths defined in (\ref{lin1})-(\ref{lin4}). The following proposition shows that Bernstein numbers and nonlinear widths are tightly related.
\begin{proposition}
\label{bern-width}
Let $K$ be any compact subset of the normed linear space $X$. Then
\be b_n(K,X)\leq \delta_{n}(K,X).\label{b-w1} \ee
for all $n\in \mathbb{N}$.
\end{proposition}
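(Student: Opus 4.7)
The plan is to argue by contradiction using the Borsuk--Ulam theorem. Suppose that $b_n(K,X) > \delta_n(K,X)$, and pick $\lambda$ with
\[
\delta_n(K,X) \;<\; \lambda \;<\; b_n(K,X).
\]
By definition of $b_n$, there exists an $(n+1)$-dimensional subspace $L_{n+1}\subset X$ such that the ball $B_\lambda := \lambda U(X) \cap L_{n+1}$ is contained in $K$. By definition of $\delta_n$, there exist continuous maps $a\colon K \to \mathbb{R}^n$ and $M\colon \mathbb{R}^n \to X$ with
\[
\sup_{x\in K}\|x - M(a(x))\,|X\| \;<\; \lambda.
\]

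Next I would restrict $a$ to the sphere $S := \{y\in L_{n+1} : \|y\| = \lambda\}$, which lies in $B_\lambda \subset K$ and is homeomorphic (via the linear identification $L_{n+1}\cong\mathbb{R}^{n+1}$ together with normalization) to the standard $n$-sphere. Thus $a|_S$ is a continuous map from an $n$-sphere into $\mathbb{R}^n$, and the Borsuk--Ulam theorem yields antipodal points $x_0, -x_0 \in S$ with $a(x_0) = a(-x_0)$. Setting $z := M(a(x_0)) = M(a(-x_0))$, the triangle inequality gives
\[
2\lambda \;=\; \|x_0 - (-x_0)\,|X\| \;\leq\; \|x_0 - z\,|X\| + \|{-x_0} - z\,|X\| \;<\; 2\lambda,
\]
a contradiction. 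Hence $b_n(K,X) \leq \delta_n(K,X)$.

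The only subtle point is ensuring that the Borsuk--Ulam theorem applies in the correct form: we need a continuous map from a topological $n$-sphere into $\mathbb{R}^n$, which is guaranteed here since $L_{n+1}$ is $(n+1)$-dimensional and all norms on a finite-dimensional space are equivalent, so the norm-$\lambda$ sphere in $L_{n+1}$ is genuinely homeomorphic to $S^n$ with the antipodal map $y\mapsto -y$ corresponding to the usual antipodal action. With that in place, everything else is routine. This explains why the definition of $b_n$ uses subspaces of dimension $n+1$ while $\delta_n$ uses parameters in $\mathbb{R}^n$: the indices are precisely tuned so that Borsuk--Ulam bites.
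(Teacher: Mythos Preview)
Your argument is correct. The paper itself does not supply a proof of this proposition; it simply attributes the inequality to DeVore, Howard and Micchelli \cite{Devo2} in the remark immediately following the statement. The Borsuk--Ulam argument you give is precisely the one used in that reference, so your proposal matches the intended proof.
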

\begin{remark}\rm
{\rm (i)} Inequality (\ref{b-w1}) was proved by DeVore, Howard and Micchelli \cite{Devo2}. There is an other counterpart of the inequality (\ref{b-w1}), proved by Tikhomirov \cite[Theorem 3, p. 190]{Ti2}. He showed that $b_n(K,X)\leq 2a_{n}(K,X) $.\\
{\rm (ii)}
The inequality (\ref{b-w1}) is very useful because in many cases the upper bound of $\delta_{n}(K,X) $ (lower bound of $b_n(K,X)$) can be used for the upper estimate of $b_n(K,X)$ (lower estimate of $\delta_{n}(K,X) $).
\end{remark}
\section{Bernstein numbers of embeddings of isotropic Besov spaces}\label{sec5}
There are several definitions of isotropic Besov spaces which are currently in use, see \cite{Devo3} and \cite[2.3.1]{Tr83}. 
These definitions are equivalent with certain restrictions on the parameters. In this section, we introduce Besov spaces by using the modulus of smoothness and describe a B-spline representation for functions in these spaces. For the following we refer to \cite{Devo3}. $ $

Let $1\leq p,q\leq \infty$ and $f\in L_p(\Omega)$. By $\omega_r(f,s)_p, s>0$, we denote the modulus of smoothness of order $r$ of $f\in L_p(\Omega)$
\be\label{ct1}
\omega_r(f,s)_p:=\sup_{|h|\leq s}\|\Delta_h^r(f,.)|L_p(\Omega(rh))\|,
\ee
where $|h|$ is the Euclidean length of the vector $h\in \R$; $\Delta_h^r$ is the $r$-th order difference with step $h$; and the norm in (\ref{ct1}) is the $L_p$ norm taken on $\Omega(rh)=\{x: x,x+rh\in \Omega\}$. If $ p,q\geq 1 $ and $t>0$, we say that $f\in L_p(\Omega)$ belongs to the Besov space $B^t_{p,q}(\Omega)$ whenever 
$$|f|_{B^{t}_{p,q}(\Omega)}=\Big(\int_{0}^{\infty}(s^{-t}\omega_r(f,s)_p)^q\frac{ds}{s}\Big)^{\frac{1}{q}}$$
is finite. Here $r$ is any integer which is larger than $t$. When $q=\infty$ the usual modification takes place. We define
$$\|f|B_{p,q}^t(\Omega)\|= \|f|L_p(\Omega)\|+|f|_{B^{t}_{p,q}(\Omega)}.$$
For $t>0$, we fix the integer $r$ so that $r>t$. Let $N(x)=N(x,0,\dots,r)$ be the univariate B-spline of degree $r-1$ which has knots at the points $0,1,...,r$. For $d$ dimension we define $\mathcal{N}$ as the tensor product B-spline
$$\mathcal{N}(x):=N(x_1)\cdots N(x_d), \ x=(x_1,...,x_d).$$
The splines $\mathcal{N}$ are nonnegative and have support on the cube $[0,r]^d$. We define the shifted dilates
$$\mathcal{N}_{{j},k}(x)=\mathcal{N}(2^{k}x-{j}),\ {j}\in \mathbb{Z}^d,\ k\in\N_0,\ x\in \mathbb{R}^d.$$
Then the B-splines $\mathcal{N}_{{j},k}$ form a partition of unity, i.e.,
$$\sum_{{j}\in \mathbb{Z}^d}\mathcal{N}_{{j},k}=1,\ \forall x\in\mathbb{R}^d.$$
Denote $\varLambda_k$ by the set of those indices ${j}\in \mathbb{Z}^d$ such that $\mathcal{N}_{{j},k}$ does not vanish identically on $\Omega$. Let $S_{k}$ denote the linear span of the B-splines $\mathcal{N}_{{j},k}$, ${j}\in \varLambda_k$. Then any $f\in S_{k}$ can be represented as
$$f=\sum_{{j}\in\varLambda_k }a_{{j},k}(f)\mathcal{N}_{{j},k},$$
where $a_{{j},k} $ is the  dual functional of $\mathcal{N}_{{j},k}$. It is proved in \cite{Devo3} that for any $f\in B^t_{p,q}(\Omega)$ we have the following atomic decomposition
$$f =\sum_{k=0}^{\infty} \sum_{{j}\in\varLambda_k }a_{{j},k}(f)\mathcal{N}_{{j},k},$$
with convergence in the sense of $L_{p}(\Omega)$. By $S_kL_p(\Omega)$ we denote the space $S_k$ with the norm induced from $L_p(\Omega)$. Let $d_k=\text{dim}S_k$. Then we have $d_k\asymp 2^{kd}$. As a preparation for the proof of Theorem \ref{main1} we recall some helpful results.
\begin{lemma}\label{lem1}Let $k\in \mathbb{N}_0$ and $1\leq p\leq \infty$.
\begin{enumerate}
\item Denote by $J_k: S_kL_p(\Omega)\to \ell_p^{d_k}$ a mapping such that $J_k(f)=(a_{{j},k}(f))_{j\in \varLambda_k}$ for $f\in S_kL_p(\Omega) $. Then
$$ \|J_k: S_kL_p(\Omega)\to \ell_p^{d_k} \|\lesssim 2^{\frac{kd}{p}}\ \ \text{and}\ \ \| J_k^{-1}: \ell_p^{d_k} \to S_kL_p(\Omega)\|
\lesssim 2^{-\frac{kd}{p}},$$
with the constants behind $\lesssim$ depending at most on $r$ and $d$.
\item The projection $T_kf=\sum_{{j}\in\varLambda_k}a_{{j},k}(f)\mathcal{N}_{{j},k}$ satisfies
$$\| T_k: L_p(\Omega)\to S_kL_p(\Omega) \|\leq c,$$
where $c$ independent of $k$.
\item If $r\in \mathbb{N}$, $1\leq q\leq \infty$ and $0<t<\min(r,r-1+\frac{1}{p})$, then for $f\in S_k$, we have
$$\| f|B^t_{p,q}(\Omega) \|\leq c 2^{tk}\| f|S_kL_p(\Omega) \|,$$
with $c$ independent of $f$ and $n$.
\end{enumerate}
\end{lemma}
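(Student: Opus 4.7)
The lemma collects three standard facts about tensor-product B-spline spaces on $\Omega$, and I would prove them via the usual localisation and scaling arguments from approximation theory.

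For part (i), the plan is to exploit two elementary facts about the family $\{\mathcal{N}_{j,k}\}_{j \in \varLambda_k}$: by the tensor-product structure, each $\mathcal{N}_{j,k}$ has support in a cube of side $\asymp 2^{-k}$, so $\|\mathcal{N}_{j,k}|L_p(\Omega)\| \asymp 2^{-kd/p}$; and any point of $\Omega$ lies in the support of at most $r^d$ of these splines. The upper bound on $\|J_k^{-1}\|$ follows by writing $f = \sum_j a_{j,k}(f)\mathcal{N}_{j,k}$, applying the triangle inequality in $L_p$, and invoking the bounded-overlap property to pass from a sum of supports to an $\ell_p$-sum of scalars; concretely this gives $\|f|L_p(\Omega)\|^p \lesssim 2^{-kd}\sum_j |a_{j,k}(f)|^p$. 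For the bound on $\|J_k\|$ I would use the standard fact (as constructed in de Boor's work) that each dual functional $a_{j,k}$ is representable by an $L_{p'}$ function supported on a small neighbourhood of the corresponding simplex, with $\|a_{j,k}\|_{p \to \mathbb{R}} \lesssim 2^{kd/p'}$; then the reverse inequality $2^{-kd}\sum_j |a_{j,k}(f)|^p \lesssim \|f|L_p(\Omega)\|^p$ follows once more from bounded overlap.

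For part (ii), the proof reduces to a local estimate. Using the representation of $a_{j,k}$ from part (i), one obtains $|a_{j,k}(f)| \lesssim 2^{kd/p'}\|f|L_p(\text{supp}\,\mathcal{N}_{j,k})\|$ for any $f \in L_p(\Omega)$. Then the $p$-th power of $\|T_kf|L_p(\Omega)\|$ is bounded, again by bounded overlap, by $c\sum_j |a_{j,k}(f)|^p \|\mathcal{N}_{j,k}|L_p\|^p \lesssim \sum_j \|f|L_p(\text{supp}\,\mathcal{N}_{j,k})\|^p \lesssim \|f|L_p(\Omega)\|^p$, with a constant depending only on $r$ and $d$. (The case $p=\infty$ is even easier and handled by the same argument replacing sums with suprema.)

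Part (iii) is the genuine Bernstein/inverse inequality and will be the real work. Split the seminorm integral at $s = 2^{-k}$:
\[
|f|_{B^t_{p,q}(\Omega)}^q = \int_0^{2^{-k}} (s^{-t}\omega_r(f,s)_p)^q \frac{ds}{s} + \int_{2^{-k}}^{\infty} (s^{-t}\omega_r(f,s)_p)^q \frac{ds}{s}.
\]
For the tail integral, use the crude bound $\omega_r(f,s)_p \leq 2^r \|f|L_p(\Omega)\|$ and integrate; because $t>0$ this contributes $\lesssim 2^{ktq}\|f|L_p(\Omega)\|^q$. For the small-$s$ part I plan to invoke the spline Bernstein inequality $\omega_r(f,s)_p \lesssim (2^k s)^{\sigma}\|f|L_p(\Omega)\|$ valid for every $\sigma < \min(r, r-1+1/p)$: the value $r$ records that the $r$-th modulus annihilates polynomials of degree $<r$ on each cell (giving a scale-$2^{-k}$ Taylor-type estimate after tensorisation), and the value $r-1+1/p$ reflects the jump of the $(r-1)$-th derivative of a B-spline of degree $r-1$ across knots, which sits precisely in $B^{1/p}_{p,\infty}$. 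Applying this with some $\sigma \in (t, \min(r, r-1+1/p))$ makes the remaining integral converge and produces the same $2^{ktq}\|f|L_p(\Omega)\|^q$ bound. Combined with part (i) giving $\|f|L_p(\Omega)\| \lesssim \|f|S_kL_p(\Omega)\|$ (they are equal by definition), this yields the claim. The obstacle I expect here is the bookkeeping for the joint tensor Bernstein inequality in $d$ variables and the sharp identification of the exponent $r-1+1/p$; both are classical but the second is the only place where the restriction on $t$ enters, so the argument must be set up carefully to accommodate the endpoint $p=\infty$.
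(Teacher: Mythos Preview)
The paper does not actually prove this lemma: it simply attributes parts (i) and (ii) to Lubitz and part (iii) to DeVore--Popov, and moves on. Your sketch reproduces the standard arguments from those sources and is correct in outline.

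One notational slip worth fixing: the local bound on the dual functionals should read $|a_{j,k}(f)| \lesssim 2^{kd/p}\,\|f|L_p(\supp \mathcal{N}_{j,k})\|$, not $2^{kd/p'}$. The representing function $\lambda_{j,k}$ has $\|\lambda_{j,k}\|_\infty \lesssim 2^{kd}$ on a cube of volume $\asymp 2^{-kd}$, hence $\|\lambda_{j,k}\|_{p'} \lesssim 2^{kd/p}$; with this exponent the chains in (i) and (ii) balance correctly, whereas with $p'$ they do not. For (iii) your plan is exactly the classical one; note that the sharp estimate $\omega_r(f,s)_p \lesssim (2^k s)^{r-1+1/p}\|f|L_p\|$ for $s \le 2^{-k}$ actually holds (in one variable it follows from $f^{(r-1)}$ being piecewise constant with $\asymp 2^k$ jumps, then one tensorises), so there is no need to choose $\sigma$ strictly below the threshold.
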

Parts (i) and (ii) of the above lemma were proved by Lubitz \cite[Lemma 4.4]{Lub}, part (iii) was obtained by DeVore and Popov \cite{Devo3}. Lemma \ref{lem1} allows us to shift the estimate from below of $ b_n(id: B^t_{p_1,q}(\Omega)\rightarrow L_{p_2}(\Omega) )$ to an estimate of $b_n(id_{p_1,p_2}^{m})$. 
\begin{lemma}Let $1\leq p_1,p_2,q\leq \infty$. We have
\be\label{case0}
 b_n(id: B^t_{p_1,q}(\Omega)\rightarrow L_{p_2}(\Omega))\gtrsim m^{-\frac{t}{d}+\frac{1}{p_1}-\frac{1}{p_2}}b_n(id_{p_1,p_2}^{m}),\ee
for $n,m\in \mathbb{N}$ and $1\leq n<m.$
\end{lemma}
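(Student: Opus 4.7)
The plan is to transfer an almost-extremal $n$-dimensional subspace of $\ell_{p_1}^m$ into a finite-dimensional B-spline subspace $S_k$ sitting inside $B^t_{p_1,q}(\Omega)$, and then read off the ratios via the three parts of Lemma \ref{lem1}. Concretely, I would first choose $k\in\N_0$ minimal with $d_k\geq m$; since $d_k\asymp 2^{kd}$, this gives $d_k\asymp m$. Given any $n$-dimensional subspace $L_n\subset \ell_{p_1}^m$, I extend each vector to $\ell_{p_1}^{d_k}$ by padding with zeros. This coordinate embedding is an isometry simultaneously for $\ell_{p_1}$ and $\ell_{p_2}$, so the infimum of the ratio $\|a\|_{p_2}/\|a\|_{p_1}$ over $L_n\setminus\{0\}$ is preserved. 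Call the resulting $n$-dimensional subspace $\widehat L_n\subset\ell_{p_1}^{d_k}$, and define $\widetilde L_n:=J_k^{-1}(\widehat L_n)\subset S_k\subset B^t_{p_1,q}(\Omega)$, which has dimension $n$ because $J_k^{-1}$ is injective.

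Next I would estimate both norms of a typical element $f=J_k^{-1}(a)\in \widetilde L_n$. Fix the spline order $r$ large enough that $0<t<\min(r,r-1+1/p_1)$, so that Lemma \ref{lem1}(iii) is applicable. Combining Lemma \ref{lem1}(iii) with the upper bound on $J_k^{-1}$ from Lemma \ref{lem1}(i) yields
\[
\|f\,|B^t_{p_1,q}(\Omega)\|\;\lesssim\; 2^{tk}\,\|f\,|S_kL_{p_1}(\Omega)\|\;\lesssim\; 2^{tk-kd/p_1}\,\|a\|_{\ell_{p_1}^{d_k}}.
\]
For the $L_{p_2}$ norm, since $f\in S_k$ we have $\|f\,|L_{p_2}(\Omega)\|=\|f\,|S_kL_{p_2}(\Omega)\|$, and the upper bound on $J_k$ from Lemma \ref{lem1}(i) (applied with parameter $p_2$) gives
\[
\|a\|_{\ell_{p_2}^{d_k}}=\|J_kf\|_{\ell_{p_2}^{d_k}}\;\lesssim\;2^{kd/p_2}\,\|f\,|L_{p_2}(\Omega)\|,
\]
so that $\|f\,|L_{p_2}(\Omega)\|\gtrsim 2^{-kd/p_2}\|a\|_{\ell_{p_2}^{d_k}}$.

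Dividing these two estimates and using $2^{kd}\asymp m$ produces
\[
\frac{\|f\,|L_{p_2}(\Omega)\|}{\|f\,|B^t_{p_1,q}(\Omega)\|}\;\gtrsim\; m^{-t/d+1/p_1-1/p_2}\,\frac{\|a\|_{\ell_{p_2}^{m}}}{\|a\|_{\ell_{p_1}^{m}}},
\]
where we used that $a$ is supported on the first $m$ coordinates. Taking the infimum over $a\in L_n\setminus\{0\}$ and then the supremum over all $n$-dimensional subspaces $L_n\subset\ell_{p_1}^m$ delivers the claim. The only genuinely nontrivial point is the matching between the free parameter $m$ and the discrete dimensions $d_k$; this is handled by the zero-padding step, which works precisely because the coordinate embedding $\ell_p^m\hookrightarrow\ell_p^{d_k}$ is isometric and because $k$ was chosen so that $d_k\asymp m$. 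Everything else is a direct chaining of the three ingredients in Lemma \ref{lem1}.
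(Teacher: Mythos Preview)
Your proposal is correct and follows essentially the same route as the paper: both push an $n$-dimensional subspace of $\ell_{p_1}^{d_k}$ into the spline layer $S_k$ via $J_k^{-1}$ and then compare the $B^t_{p_1,q}$ and $L_{p_2}$ norms through Lemma~\ref{lem1}. The paper phrases this as a commutative diagram and applies the ideal property~(s3) directly (using also $T_k$ from Lemma~\ref{lem1}(ii) to close the loop, and a final monotonicity remark to pass from $d_k$ to general $m$), whereas you unpack the definition of $b_n$ by hand and handle general $m$ via zero-padding---these are cosmetic differences, not a different argument.
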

\begin{proof}
We consider the following commutative diagram

$$
\begin{CD}
\ell_{p_1}^{d_k} @ >J_k^{-1}>> S_kL_{p_1}(\Omega) @>W_k>>B^t_{p_1,q}(\Omega)\\
@V id_{p_1,p_2}^{d_k} VV @. @V V id V\\
\ell_{p_2}^{d_k} @ <J_k<< S_kL_{p_2}(\Omega)@ <T_k<< L_{p_2}(\Omega)
\end{CD}
$$
Then, the property (s3) yields
$$b_n(id_{p_1,p_2}^{d_k})\leq \|J_k^{-1}\|\,\|W_k\| \,\| T_k\|\, \|J_k\| \, b_n(id).$$
Applying Lemma \ref{lem1}, we obtain
$$b_n(id_{p_1,p_2}^{d_k})\lesssim 2^{tk}2^{-\frac{kd}{p_1}} 2^{\frac{kd}{p_2}}b_n(id ).$$
This implies
$$ b_n(id)\gtrsim 2^{kd(-\frac{t}{d}+\frac{1}{p_1}-\frac{1}{p_2})} b_n(id_{p_1,p_2}^{d_k}).$$
Simple monotonicity arguments allow to switch from $m=2^{kd}$ to general $m\in \mathbb{N}$.
\end{proof}
Note that the inequality (\ref{case0}) also holds for any $s$-number since the only property  we had used is $(s3)$. We shall apply Proposition \ref{bern-width} and Corollary \ref{bern-weyl0} to obtain estimates from above of Bernstein numbers of the embedding $id: B^t_{p_1,q}(\Omega)\rightarrow L_{p_2}(\Omega)$. For that reason, let us recall the behaviour of the nonlinear widths of the unit ball $U(B^t_{p_1,q}(\Omega))$ in $L_{p_2}(\Omega)$, see \cite{Devo1,Dung2}.
\begin{proposition}\label{width1}
Let $1\leq p_1 ,p_2, q \leq \infty$ and $\frac{t}{d}>(\frac{1}{p_1}-\frac{1}{p_2})_+$. Then 
 $$w_n(U(B^t_{p_1,q}(\Omega)), L_{p_2}(\Omega))\asymp n^{-\frac{t}{d}},\ n\in\mathbb{N}.$$
 \end{proposition}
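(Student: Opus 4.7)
The plan is to prove matching upper and lower bounds separately.

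\textbf{Upper bound.} I would derive $w_n\lesssim n^{-t/d}$ by nonlinear $n$-term approximation in the B-spline basis of Lemma \ref{lem1}. Given $f\in B^t_{p_1,q}(\Omega)$ with expansion $f=\sum_{k,j} a_{j,k}(f)\,\mathcal{N}_{j,k}$, the idea is to select the $n$ indices $(j,k)$ maximizing a weighted quantity of the form $|a_{j,k}(f)|\cdot 2^{-k(t+d/p_2-d/p_1)}$ and to truncate the expansion accordingly. Classical nonlinear approximation estimates of DeVore--Popov type then yield an $L_{p_2}$-error of order $n^{-t/d}\,\|f|B^t_{p_1,q}(\Omega)\|$. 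To realize this as a manifold width estimate $\delta_n\lesssim n^{-t/d}$ in the sense of (\ref{lin4}), the selection map must be continuous; this is engineered either by a smoothed (soft) thresholding on the coefficients, or by stratifying parameter space into open regions on which the ordering of the $n$ largest weighted coefficients is stable and then patching by a continuous partition of unity, see \cite{Devo1,Dung2}. Lemma \ref{equiv} then carries the upper bound over to every flavour of nonlinear width $w_n$.

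\textbf{Lower bound.} By Proposition \ref{bern-width} and the identity $b_n(T(U(X)),Y)=b_{n+1}(T)$ recorded in the introduction,
\[
w_n(U(B^t_{p_1,q}(\Omega)),L_{p_2}(\Omega))\geq b_{n+1}(id:B^t_{p_1,q}(\Omega)\to L_{p_2}(\Omega)),
\]
so it suffices to exhibit an $(n+1)$-dimensional subspace $L_{n+1}\subset B^t_{p_1,q}(\Omega)$ on which the ratio $\|f|L_{p_2}(\Omega)\|/\|f|B^t_{p_1,q}(\Omega)\|$ is uniformly bounded below by a constant times $n^{-t/d}$. The canonical choice is the span of $n+1$ smooth bumps with pairwise disjoint supports in cubes of side $\asymp n^{-1/d}$. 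A scaling computation, together with the commutative diagram of Lemma \ref{lem1} and the transfer identity (\ref{case0}) (which only relies on property (s3)), reduces the task to a lower bound for the finite-dimensional Bernstein number $b_n(id_{p_1,p_2}^m)$ at $m\asymp n$, supplied by Corollary \ref{ber}. In every subrange where Corollary \ref{ber} exactly compensates the scaling factor $m^{-t/d+1/p_1-1/p_2}$, the desired estimate follows.

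\textbf{Main obstacle.} The genuinely difficult regime is $p_2<p_1$ with $p_1>2$: here the disjoint-bumps construction delivers only $n^{1/p_1-1/p_2-t/d}<n^{-t/d}$, so the inequality (\ref{b-w1}) is no longer tight and the Bernstein-width route alone does not close the gap. Following DeVore, Howard and Micchelli \cite{Devo2}, one instead constructs a continuous antipodal embedding of an $n$-sphere into $U(B^t_{p_1,q}(\Omega))$ whose $L_{p_2}$-diameter is still of order $n^{-t/d}$, and invokes a Borsuk--Ulam type obstruction to show that no continuous $n$-parameter family $M\circ a$ can approximate the sphere to within $o(n^{-t/d})$. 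This topological refinement, rather than the straightforward Bernstein argument, is the technical heart of the lower bound in the remaining parameter region.
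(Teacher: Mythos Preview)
The paper does not prove Proposition \ref{width1}; it simply quotes the result from \cite{Devo1,Dung2}. So there is no ``paper's own proof'' to compare against, and in fact only the \emph{upper} estimate $w_n\lesssim n^{-t/d}$ is ever used afterwards (in the proof of Theorem~\ref{main1}, where $b_n\lesssim\min(x_n,\delta_n)$ is invoked).

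On the merits of your sketch: the upper bound via continuous soft--thresholding of B--spline coefficients is exactly the method of \cite{Devo1} and is correct. Your lower bound via Proposition~\ref{bern-width} together with (\ref{case0}) and Corollary~\ref{ber} is also correct, and it delivers the sharp rate $n^{-t/d}$ precisely in the parameter region of Theorem~\ref{main1}(i), i.e.\ $p_1\le p_2$ or $p_2<p_1\le 2$. You correctly isolate the remaining region $p_1>\max(2,p_2)$ as the obstacle: there Theorem~\ref{main1} gives $b_n\asymp n^{-\alpha}$ with $\alpha>t/d$, so the inequality (\ref{b-w1}) is genuinely too weak and something beyond the linear Bernstein--width argument is needed.

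The gap is in your proposed fix. What you describe --- an odd continuous map $\Phi:S^n\to U(B^t_{p_1,q}(\Omega))$ with $\|\Phi(x)\|_{L_{p_2}}\gtrsim n^{-t/d}$ for \emph{every} $x$ --- is precisely the statement to be proved, not a method for proving it. Any odd continuous scalar function on $S^n$ must vanish somewhere, so in a bump expansion $\Phi(x)=\sum_i F_i(x)\phi_i$ each coefficient $F_i$ has zeros; the difficulty is to arrange that never too many vanish simultaneously, and the obvious choices (linear $\Phi$, or coordinatewise reparametrisations of the $\ell_2$--sphere) all collapse to a function supported on $O(1)$ bumps at some point, which only yields $\|\Phi(x)\|_{L_{p_2}}\gtrsim n^{-t/d-1/p_2}$. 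Your reference \cite{Devo2} does not help here: that paper establishes exactly the inequality $b_n\le\delta_n$ you have already used, nothing sharper. The genuine lower--bound argument for this regime (as in \cite{Devo1,Dung2}) is more delicate and is not supplied by your outline. If your aim is only to support the paper's use of Proposition~\ref{width1}, you may simply note that the upper half of the $\asymp$ is all that is required.
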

Concerning the behaviour of Weyl numbers of the embedding $B^t_{p_1,q}(\Omega)\rightarrow L_{p_2}(\Omega)$ we refer to \cite{Cae1,Cae2,Koe,Lub}.
\begin{proposition}\label{weylis}

Let $1\leq p_1,p_2,q\leq \infty$ and $\frac{t}{d}>(\frac{1}{p_1}-\frac{1}{p_2})_+$. Then
 $$x_n(id: B^t_{p_1,q}(\Omega)\rightarrow L_{p_2}(\Omega))\asymp \varphi_1(n,t,p_1,p_2)=n^{-\alpha},\ \ n\in \mathbb{N},$$
 where 
\begin{enumerate}
\item \quad{\makebox[3.5cm][l]{$\alpha=\frac{t}{d}$} if\ \ $ p_1,p_2 \le 2$};
\item \quad{\makebox[3.5cm][l]{$\alpha=\frac{t}{d} + \frac{1}{p_2} - \frac{1}{2}$} if\ \ $p_1 \le 2 \le p_2 $};
\item \quad{\makebox[3.5cm][l]{$\alpha=\frac{t}{d} - \frac{1}{p_1} + \frac 12$} if\ \ $p_2 \le 2 \le p_1,\ \frac{t}{d}>\frac{1}{p_1}$};
\item \quad{\makebox[3.5cm][l]{$\alpha=\frac{t}{d} + \frac{1}{p_2} - \frac{1}{p_1} $} if\ \ $\big(2 \le p_2 \le p_1,\ \frac{t}{d}> \frac{1/p_2 - 1/p_1}{p_1/2 - 1}\big) $ or $ 2 \le p_1 \le p_2$};
\item \quad{\makebox[3.5cm][l]{$\alpha= \frac{tp_1}{2d} $} if\ \ $\big(2 \le p_2 < p_1,\ \frac{t}{d} < \frac{1/p_2 - 1/p_1}{p_1/2 - 1}\big)$ or $ \big(p_2 \le 2 \le p_1,\ \frac{t}{d}< \frac{1}{p_1}\big)$}.
\end{enumerate}
\end{proposition}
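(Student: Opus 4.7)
The plan is to transfer the Weyl number estimate for the embedding of function spaces into a collection of Weyl number estimates for the finite-dimensional identities $id^m_{p_1,p_2}:\ell_{p_1}^m\to\ell_{p_2}^m$, whose asymptotic behaviour is classical. The B-spline machinery of Lemma~\ref{lem1} is precisely the tool that makes this reduction possible. The crucial point, which distinguishes this proof from that of the main theorems of the paper, is that Weyl numbers are genuine $s$-numbers, so both additivity (s2) and the ideal property (s3) are at our disposal.

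\textbf{Upper bound.} I would decompose $id=\sum_{k\ge 0} id_k$ via $f=\sum_k (T_kf-T_{k-1}f)$ (with $T_{-1}:=0$); each difference belongs to $S_k$ because the B-spline spaces are nested. Thus $id_k$ factors as
$$
B^t_{p_1,q}(\Omega)\longrightarrow S_kL_{p_1}(\Omega)\stackrel{J_k}{\longrightarrow}\ell_{p_1}^{d_k}\stackrel{id^{d_k}_{p_1,p_2}}{\longrightarrow}\ell_{p_2}^{d_k}\stackrel{J_k^{-1}}{\longrightarrow}S_kL_{p_2}(\Omega)\longrightarrow L_{p_2}(\Omega),
$$
and the outer factor norms are controlled, via Lemma~\ref{lem1} together with the embedding $B^t_{p_1,q}\hookrightarrow B^{t-d(1/p_1-1/p_2)_+}_{p_2,q}$, by a factor behaving like $2^{-k(t-d(1/p_1-1/p_2)_+)}$. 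The ideal property (s3) therefore yields
$$
x_{n_k}(id_k)\lesssim 2^{-k(t-d(1/p_1-1/p_2)_+)}\, x_{n_k}(id^{d_k}_{p_1,p_2}).
$$
Applying additivity (s2) to a split $n=\sum_k n_k$, I obtain $x_n(id)\lesssim\sum_k x_{n_k}(id_k)$; the sharp asymptotics of $x_{n_k}(id^{d_k}_{p_1,p_2})$ due to Gluskin, K\"onig and others, together with an optimisation over the allocation $(n_k)$, yield the claimed rate in each of the five cases.

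\textbf{Lower bound.} I would simply re-run the shifting argument \eqref{case0} with $x_n$ in place of $b_n$, as the only property used there is (s3). This produces $x_n(id)\gtrsim 2^{-k(t-d/p_1+d/p_2)}\,x_n(id^{d_k}_{p_1,p_2})$. Choosing $m=d_k\asymp 2^{kd}$ (i.e.\ tuning the resolution $k$) in terms of $n$ and of the position of $(1/p_1,1/p_2)$ in the plane, and inserting the classical lower bounds on the sequence-space Weyl numbers, one recovers a matching lower bound in every regime.

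\textbf{Main obstacle.} The work is combinatorial rather than conceptual. The Weyl numbers $x_n(id^m_{p_1,p_2})$ already possess several distinct regimes depending on the ratio $n/m$ and on the positions of $p_1,p_2$ relative to the cotype threshold $2$; this is precisely why the proposition splits into five cases. For each case one must (a) identify the critical scale $k_\ast(n)$ at which a given block contributes most, and (b) check that the optimal distribution $(n_k)$ reproduces the stated exponent. The most delicate cases are (iii), (iv) and (v), in which $2$ lies between $p_1$ and $p_2$ and the smoothness $t$ has to be compared to thresholds such as $1/p_1$ and $(1/p_2-1/p_1)/(p_1/2-1)$; the latter is exactly the value of $t/d$ at which the optimal $k_\ast(n)$ changes its $n$-dependence, and matching upper and lower bounds across this threshold is the main technical hurdle.
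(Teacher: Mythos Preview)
The paper does not prove Proposition~\ref{weylis}; it is quoted from Caetano \cite{Cae1,Cae2}, K\"onig \cite{Koe} and Lubitz \cite{Lub}. Your outline is exactly the scheme carried out in those references: discretise via the B-spline system of Lemma~\ref{lem1}, use the additivity (s2) of Weyl numbers together with an optimised splitting $n=\sum_k n_k$ for the upper bound, reduce to the known behaviour of $x_n(id^m_{p_1,p_2})$, and recycle the diagram behind \eqref{case0} for the lower bound (as the paper itself remarks right after \eqref{case0}, only (s3) is needed there).

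One imprecision in your upper bound: you should \emph{not} insert the Sobolev embedding $B^t_{p_1,q}\hookrightarrow B^{t-d(1/p_1-1/p_2)_+}_{p_2,q}$ before discretising. That step would collapse the finite-dimensional identity to $\ell_{p_2}^{d_k}\to\ell_{p_2}^{d_k}$ and throw away precisely the gain that Weyl numbers have over approximation numbers in cases (ii)--(v). The correct factorisation keeps $f\mapsto T_kf-T_{k-1}f$ as a map $B^t_{p_1,q}(\Omega)\to S_kL_{p_1}(\Omega)$, whose norm is $\lesssim 2^{-kt}$ by B-spline approximation; combined with $\|J_k\|\lesssim 2^{kd/p_1}$ and $\|J_k^{-1}\|\lesssim 2^{-kd/p_2}$ from Lemma~\ref{lem1}(i), the outer factor is $2^{-k(t-d(1/p_1-1/p_2))}$ \emph{without} the positive part. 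With this correction your plan is the standard one and goes through.
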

\noindent
Now we ready to prove the Theorem \ref{main1}.\\
\noindent
{\bf Proof of Theorem \ref{main1}.} From (\ref{case0}), choosing $n=\big[\frac{m}{2}\big]$ and taking into account \eqref{th3}, we obtain
$$b_n(id: B^t_{p_1,q}(\Omega)\rightarrow L_{p_2}(\Omega))\gtrsim n^{-\frac{t}{d}+\frac{1}{p_1}-\frac{1}{p_2}}n^{\frac{1}{p_2}-\frac{1}{p_1}} =n^{-\frac{t}{d}}.$$
This proves the estimate from below in (i). We prove the lower bound in (iv). This time we choose $n=\big[m^{\frac{2}{p_1}}\big]$. In a view of \eqref{case0} and \eqref{th4} we have 
$$b_n(id: B^t_{p_1,q}(\Omega)\rightarrow L_{p_2}(\Omega))\gtrsim m^{-\frac{t}{d}+\frac{1}{p_1}-\frac{1}{p_2}}m^{\frac{1}{p_2}-\frac{1}{p_1}} =m^{-\frac{t}{d}}\asymp n^{-\frac{tp_1}{2d}}.$$
For the lower bound in the other cases we combine the inequality (\ref{case0}) with inequalities (\ref{th1}) or (\ref{th2}) and choose $n=\big[\frac{m}{2}\big]$. Concerning the estimate from above we shall use 
$$b_n\lesssim \min(x_n, \delta_n),\ \ n\in \mathbb{N},$$
where we take into account of the polynomial behaviour of the Weyl numbers of the embedding $B^t_{p_1,q}(\Omega)\rightarrow L_{p_2}(\Omega) $ and  Corollary \ref{bern-weyl0}, Propositions \ref{bern-width}, \ref{width1} and \ref{weylis}. The proof is complete. \qed 
\vskip 2mm
\noindent
Let $t\in \mathbb{R}$ and $ 1<p<\infty$. By $H^t_p(\mathbb{R}^d)$ we denote the fractional order Sobolev spaces:
$$H^t_{p}(\mathbb{R}^d)=\big\{f\in \mathcal{S}'(\mathbb{R}^d):\|f|H^t_{p}(\mathbb{R}^d)\|=\| \mathcal{F}^{-1}\big[(1+|x|^2)^{\frac{t}{2}}\mathcal{F}f\big]| L_{p}(\mathbb{R}^d)\|<\infty\big\}.$$
Here $\mathcal{F}g$ and $\mathcal{F}^{-1}g$ denote the  Fourier transform and its inverse transform of  $g\in \mathcal{S}'(\mathbb{R}^d)$. Note that if $t\in \N$ then $H^t_{p}(\mathbb{R}^d)=W^t_p(\R) $ in the sense of equivalent norms, where $W^t_p(\R)$ is the classical Sobolev space. As the consequence of Theorem \ref{main1} and the chain of continuous embeddings 
$$B_{p_1,1}^t(\Omega)\rightarrow H_{p_1}^t(\Omega)\rightarrow B_{p_1,\infty}^t(\Omega), $$
see \cite[2.3.2, 2.3.5]{Tr83}, we have the following corollary.
\begin{corollary}
Let $1< p_1< \infty$, $1\leq p_2\leq \infty$ and $\frac{t}{d}>(\frac{1}{p_1}-\frac{1}{p_2})_+$. Then
 $$b_n(id: H^t_{p_1}(\Omega)\rightarrow L_{p_2}(\Omega))\asymp \phi_1(n,t,p_1,p_2),  \ \ n\in \mathbb{N},$$
where $\phi_1(nt,p_1,p_2)$ is given in theorem \ref{main1}. 
\end{corollary}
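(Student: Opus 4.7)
The plan is to sandwich the fractional Sobolev space $H^t_{p_1}(\Omega)$ between two Besov spaces covered by Theorem \ref{main1}, exploit the ideal property (s3) of Bernstein numbers (recalled in the introduction), and observe that the asymptotic order $\phi_1(n,t,p_1,p_2)$ of Theorem \ref{main1} is independent of the fine parameter $q$. Since this is what makes the sandwich collapse to a single order, no new estimates are needed beyond Theorem \ref{main1} itself.

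More concretely, I would first record the two continuous embeddings
\[
\iota_1:\; B^t_{p_1,1}(\Omega)\hookrightarrow H^t_{p_1}(\Omega),\qquad \iota_2:\; H^t_{p_1}(\Omega)\hookrightarrow B^t_{p_1,\infty}(\Omega),
\]
quoted from \cite[2.3.2, 2.3.5]{Tr83}, which require $1<p_1<\infty$. Note that for every admissible $q\in\{1,\infty\}$ the compactness condition $t/d>(1/p_1-1/p_2)_+$ ensures that the corresponding embeddings into $L_{p_2}(\Omega)$ are compact by Proposition \ref{comp}(i), so Theorem \ref{main1} is applicable at both ends.

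For the \emph{upper} estimate I factor
\[
id:H^t_{p_1}(\Omega)\to L_{p_2}(\Omega)\;=\;\bigl(id:B^t_{p_1,\infty}(\Omega)\to L_{p_2}(\Omega)\bigr)\circ \iota_2.
\]
Property (s3) for Bernstein numbers gives
\[
b_n\bigl(id:H^t_{p_1}(\Omega)\to L_{p_2}(\Omega)\bigr)\;\leq\; \|\iota_2\|\cdot b_n\bigl(id:B^t_{p_1,\infty}(\Omega)\to L_{p_2}(\Omega)\bigr)\;\lesssim\;\phi_1(n,t,p_1,p_2),
\]
by Theorem \ref{main1} applied with $q=\infty$. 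For the \emph{lower} estimate I factor
\[
id:B^t_{p_1,1}(\Omega)\to L_{p_2}(\Omega)\;=\;\bigl(id:H^t_{p_1}(\Omega)\to L_{p_2}(\Omega)\bigr)\circ \iota_1,
\]
so that (s3) gives
\[
b_n\bigl(id:B^t_{p_1,1}(\Omega)\to L_{p_2}(\Omega)\bigr)\;\leq\;\|\iota_1\|\cdot b_n\bigl(id:H^t_{p_1}(\Omega)\to L_{p_2}(\Omega)\bigr),
\]
and Theorem \ref{main1} with $q=1$ yields
\[
b_n\bigl(id:H^t_{p_1}(\Omega)\to L_{p_2}(\Omega)\bigr)\;\gtrsim\; b_n\bigl(id:B^t_{p_1,1}(\Omega)\to L_{p_2}(\Omega)\bigr)\;\asymp\;\phi_1(n,t,p_1,p_2).
\]

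Combining the two bounds yields $b_n(id:H^t_{p_1}(\Omega)\to L_{p_2}(\Omega))\asymp \phi_1(n,t,p_1,p_2)$. There is essentially no hard step here; the only point to verify carefully is that the order $\phi_1$ listed in Theorem \ref{main1} depends on $p_1,p_2,t,d$ but not on $q$, which is immediate from its statement. This independence is precisely what allows the sandwich through $B^t_{p_1,1}$ and $B^t_{p_1,\infty}$ to pin down the exact order rather than only a logarithmic window.
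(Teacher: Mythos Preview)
Your argument is correct and is exactly the approach the paper takes: the corollary is stated in the paper as an immediate consequence of Theorem \ref{main1} together with the chain of continuous embeddings $B^t_{p_1,1}(\Omega)\hookrightarrow H^t_{p_1}(\Omega)\hookrightarrow B^t_{p_1,\infty}(\Omega)$ from \cite[2.3.2, 2.3.5]{Tr83}, used via the ideal property (s3) and the $q$-independence of $\phi_1$.
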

\begin{remark}
\rm 
{\rm (i)} The lower bound in the cases (i) $(p_1\leq p_2)$ and (iii) of Theorem \ref{main1} can be found in \cite{DaNoSi}. Dahlke, Novak and Sickel \cite[consequence of Lemma 5]{DaNoSi} showed that if $1\leq p_1,q,p_2\leq \infty$ and $\frac{t}{d}>(\frac{1}{p_1}-\frac{1}{p_2})_+$ it holds
$$b_n(id: B^t_{p_1,q}(\Omega)\to L_{p_2}(\Omega))\gtrsim 
\begin{cases}
n^{-\frac{t}{d}}  &\text{if }\  p_1\leq p_2,\\
n^{-\frac{t}{d}+\frac{1}{p_1}-\frac{1}{p_2}} &\text{if }\ p_2\leq p_1,
\end{cases}
$$
for all $n\in \mathbb{N}$.\\
{\rm (ii)} In the one-dimensional periodic situation the asymptotic behaviour of Bernstein numbers were obtained by Tsarkov and Maiorov, see \cite[Theorem 12, page 194]{Ti2}. Let $1\leq p_1\leq \infty$ and $t>0$. By $\mathring{W}^t_{p_1}(\mathbb{T})$ we denote the Sobolev spaces
$$\mathring{W}^t_{p_1}(\mathbb{T})=\Big\{f(.)|\ D^tf(.)\in L_{p}(\mathbb{T})\ \text{ and } \int_{\mathbb{T}}f(x)dx=0 \Big\},$$ where $D^tf(.)$ is the Weyl fractional derivative of order $t$ of $f$. We have
$$b_n(id: \mathring{W}^t_{p_1}(\mathbb{T})\to L_{p_2}(\mathbb{T}))\asymp 
\begin{cases}
n^{-t} & \text{if}\ 1<p_1\leq p_2<\infty\ \text{or } 1<p_2\leq p_1\leq 2,\ t>0;\\
n^{-t+\frac{1}{p_1}-\frac{1}{p_2}} & \text{if}\ 2\leq p_2\leq p_1<\infty \text{ and }t>\frac{1}{p_1};\\
n^{-t+\frac{1}{p_1}-\frac{1}{2}}& \text{if}\ 1<p_2\leq 2\leq p_1<\infty \text{ and }t>\frac{1}{p_1},\\
\end{cases}
$$
for all $n\in \mathbb{N}$. This result should be compared with the Theorem \ref{main1} for $d=1$. Observe that the condition $t>\frac{1}{p_1}$ in the case $2\leq p_2\leq p_1<\infty $ is not sharp.
\end{remark}
\section{Bernstein numbers of dominating mixed Besov spaces }\label{sec6}
\subsection{Besov-Lizorkin-Triebel Spaces of Dominating Mixed Smoothness}
Besov-Lizorkin-Triebel spaces of dominating mixed smoothness have been studied systematically by many authors. For a definition of these spaces in Fourier-analytic terms we refer to \cite[Section 2.2]{ST}. Atomic and wavelet decomposition characterizations of spaces of dominating mixed smoothness have been given in \cite{Baz1, Baz2, Baz3,Vy1}. Let $\varphi \in C_0^\infty (\re)$
be a function such that $\varphi (t)=1$ in an open set containing
the origin. Then by means of
\be\label{unity}
\varphi_0 (t) = \varphi (t)\, , \qquad \varphi_j(t)=
\varphi(2^{-j}t)-\varphi(2^{-j+1}t)\, , \qquad t\in \re\, , \quad j
\in \N\, ,
\ee
we get a smooth dyadic decomposition of unity, i.e.,
\[
\sum_{j=0}^\infty \varphi_j(t)= 1 \qquad \mbox{for all}\quad t \in \re\, ,
\]
and $\supp \varphi_j$, $j\geq 1$, is contained in the dyadic annulus $\{t\in \re: \quad a \, 2^j \le |t| \le b \, 2^j \}$
with $0 < a < b < \infty$ independent of $j \in \N$.
By means of
\be\label{unityd}
\varphi_{\bar{j}} := \varphi_{{j_1}}\otimes \ldots \otimes
\varphi_{{j_d}} \, , \qquad \bar{j}=(j_1, \ldots\, , j_d)\in
\N_0^d\, ,
\ee
we obtain a smooth decomposition of unity on $\R$.

\begin{definition} Let $0<p,q \le \infty$ and $t \in\re$.
\\
{\rm (i)}
The Besov space of dominating mixed smoothness $ S^{t}_{p,q}B(\re^d)$ is the
collection of all tempered distributions $f \in \mathcal{S}'(\R)$
such that
\[
 \|\, f \, |S^{t}_{p,q}B(\R)\| :=
\Big(\sum\limits_{\bar{j}\in \N_0^d} 2^{|\bar{j}|_1 t q}\, \|\, \cfi[\varphi_{\bar{j}}\, \cf f](\, \cdot \, )
|L_p(\re^d)\|^q\Big)^{1/q}
\]
is finite. \\
{\rm (ii)} Let $0 < p< \infty$.
The Lizorkin-Triebel space of dominating mixed smoothness $ S^{t}_{p,q}F(\re^d)$ is the
collection of all tempered distributions $f \in \mathcal{S}'(\R)$
such that
\[
 \|\, f \, |S^{t}_{p,q}F(\R)\| :=
\Big\| \Big(\sum\limits_{\bar{j}\in \N_0^d} 2^{|\bar{j}|_1 t q}\, |\, \cfi[\varphi_{\bar{j}}\, \cf f](\, \cdot \, )|^q \Big)^{1/q} \Big|L_p(\re^d)\Big\|
\]
is finite. 
\end{definition}

Next we will describe the wavelet decomposition for Besov-Lizorkin-Triebel spaces of dominating mixed smoothness. We recall a few results from \cite{Vy1}. Let $\bar{\nu} =(\nu_1, \ldots \, ,\nu_d) \in \mathbb{N}_0^d$ and $\bar{m}= (m_1, \ldots \, , m_d)\in \mathbb{Z}^d$. 
Then we put $2^{-\bar{\nu}}\bar{m}=(2^{-\nu_1}m_1,...,2^{-\nu_d}m_d)$ and
\[
Q_{\bar{\nu},\bar{m}} := \Big\{x\in \R: \quad 2^{-\nu_\ell} \, m_\ell < x_\ell < 2^{-\nu_\ell}\, (m_\ell+1)\, , \: \ell = 1, \, \ldots \, , d\Big\} \, .
\]
By $\chi_{{\bar{\nu},\bar{m}}}(x)$ we denote the characteristic function of $Q_{\bar{\nu},\bar{m}}$. 

\begin{definition}\label{sequence1}
If $0<p,q\leq \infty$, $t\in \mathbb{R}$ and
$\lambda:=\lbrace \lambda_{\bar{\nu},\bar{m}}\in\mathbb{C}:\bar{\nu}\in \mathbb{N}_0^d,\ \bar{m}\in \mathbb{Z}^d \rbrace$,
then we define
$$s_{p,q}^t b := \Big\lbrace\lambda: \| \lambda|s_{p,q}^t b\| =
\Big(\sum_{\bar{\nu}\in \mathbb{N}_0^d}2^{|\bar{\nu}|_1 (t-\frac{1}{p})q}\big(\sum_{\bar{m}\in \mathbb{Z}^d}|\lambda_{\bar{\nu},\bar{m}} 
|^p\big)^{\frac{q}{p}}\Big)^{\frac{1}{q}}<\infty \Big\rbrace$$
and, if $p< \infty$, 
$$s_{p,q}^tf=\Big\lbrace\lambda: \| \lambda|s_{p,q}^tf\| =
\Big\|\Big(\sum_{\bar{\nu}\in \mathbb{N}_0^d}\sum_{\bar{m}\in \mathbb{Z}^d}|2^{|\bar{\nu}|_1 t}
\lambda_{\bar{\nu},\bar{m}}\chi_{\bar{\nu},\bar{m}}(.) 
|^q\Big)^{\frac{1}{q}}\Big|L_p(\mathbb{R}^d)\Big\|<\infty \Big\rbrace$$
with the usual modification for $p$ or/and q equal to $\infty$.
\end{definition}

Let $N \in \N$. Then there exists $\psi_0, \psi_1 \in C^N(\re) $, compactly supported, 
\[
\int_{-\infty}^\infty t^m \, \psi_1 (t)\, dt =0\, , \qquad m=0,1,\ldots \, , N\, , 
\]
such that
$\{ 2^{j/2}\, \psi_{j,m}: \quad j \in \N_0, \: m \in \zz\}$, where
\[
\psi_{j,m} (t):= \left\{ \begin{array}{lll}
\psi_0 (t-m) & \qquad & \mbox{if}\quad j=0, \: m \in \zz\, , 
\\
\sqrt{1/2}\, \psi_1 (2^{j-1}t-m) & \qquad & \mbox{if}\quad j\in \N\, , \: m \in \zz\, , 
      \end{array} \right.
\]
is an orthonormal basis in $L_2 (\re)$. 
We put
\[
\Psi_{\bar{\nu}, \bar{m}} (x) := \prod_{\ell=1}^d \psi_{\nu_\ell, m_\ell} (x_\ell)\, . 
\]
Then 
\[
\Psi_{\bar{\nu}, \bar{m}}\, , \qquad \bar{\nu} \in \N_0^d, \, \bar{m} \in \Z\, ,
\]
is a tensor product wavelet basis of $L_2 (\R)$. Vybiral \cite{Vy1} has proved the following.

\begin{lemma}\label{wavelet}
Let $0< p,q \le\infty$ and $t\in \re$. 
\\
{\rm (i)}
There exists $N=N(t,p) \in \N$ s.t. the mapping 
\be\label{wave}
{\mathcal W}: \quad f \mapsto (2^{|\bar{\nu}|_1}\langle f, \Psi_{\bar{\nu},\bar{m}} \rangle)_{\bar{\nu} \in \N_0^d\, , \, \bar{m} \in \Z} 
\ee
is an isomorphism of $S^t_{p,q}B(\R)$ onto $s^t_{p,q}b$.
\\
{\rm (ii)} Let $p <\infty$. Then
there exists $N=N(t,p,q) \in N$ s.t. the mapping ${\mathcal W}$
is an isomorphism of $S^t_{p,q}F(\R)$ onto $s^t_{p,q}f$.
\end{lemma}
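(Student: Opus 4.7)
The plan is to establish the isomorphism in two directions: analysis (wavelet coefficients lie in $s^t_{p,q}b$) and synthesis (series with coefficients in $s^t_{p,q}b$ converge in $S^t_{p,q}B$ to a function of equivalent norm). I would proceed by a standard atomic/molecular decomposition strategy adapted to the dominating mixed scale, exploiting the tensor product structure of $\Psi_{\bar{\nu},\bar{m}}$ and of the decomposition of unity $\varphi_{\bar{j}}=\varphi_{j_1}\otimes\cdots\otimes\varphi_{j_d}$.

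For the analysis direction, I would fix $f \in S^t_{p,q}B(\R)$, write $f = \sum_{\bar{k}} \cfi[\varphi_{\bar{k}} \cf f]$ in $\cs'(\R)$, and estimate each pairing
\[
\bigl|\langle \cfi[\varphi_{\bar{k}}\cf f], \Psi_{\bar{\nu},\bar{m}}\rangle\bigr|
\]
by combining the compact support of $\Psi_{\bar{\nu},\bar{m}}$ with its coordinatewise $N$ vanishing moments. A Taylor expansion in each variable where $k_\ell > \nu_\ell$ produces a factor $2^{-(k_\ell - \nu_\ell)(N+1)}$, while for $k_\ell \le \nu_\ell$ one uses the Bernstein-type inequality for band-limited functions. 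Bounding the resulting $L_p$-averages by the tensorised Peetre maximal function of $\cfi[\varphi_{\bar{k}}\cf f]$ and summing the geometric series in $\bar{k}-\bar{\nu}\in\zz^d$ (convergent once $N$ is chosen large in terms of $t$ and $1/p$) gives $\|\cw f \, | \, s^t_{p,q}b\| \lesssim \|f\,|\,S^t_{p,q}B(\R)\|$ after applying Minkowski's inequality in the $\ell_q(2^{|\bar{\nu}|_1 tq})$ sum.

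For the synthesis direction, I would verify that each $2^{-|\bar{\nu}|_1/p}\Psi_{\bar{\nu},\bar{m}}$ is an admissible atom for $S^t_{p,q}B(\R)$: it has the right support (a rectangle of sides $\asymp 2^{-\nu_\ell}$), smoothness $C^N$ in each variable, and $N$ vanishing moments in each variable when $\nu_\ell\ge 1$. The atomic decomposition theorem in \cite{Baz1,Vy1} then asserts that the map $\lambda\mapsto \sum_{\bar{\nu},\bar{m}}\lambda_{\bar{\nu},\bar{m}}\Psi_{\bar{\nu},\bar{m}}$ is bounded from $s^t_{p,q}b$ into $S^t_{p,q}B(\R)$ (with convergence in $\cs'(\R)$). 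Combined with the analysis bound and orthonormality in $L_2$, this yields that $\cw$ is an isomorphism. For part (ii), the same scheme works but the reconstruction requires the Fefferman-Stein vector-valued maximal inequality, which forces $p<\infty$ (and possibly also $q<\infty$ up to the need to replace $N$ by $N(t,p,q)$), explaining the restriction in the statement.

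The main obstacle will be the correct formulation of the Peetre-type maximal estimate in the mixed setting: I must keep the decay $2^{-(k_\ell-\nu_\ell)(N+1)}$ coordinatewise and uniform in $\bar{m}$, then apply a product Fefferman-Stein inequality on $L_p(\R)$ (for the $F$-scale) or iterate one-dimensional maximal estimates together with Minkowski (for the $B$-scale). A careful coordinatewise bookkeeping of vanishing moments is essential because the tensor product $\Psi_{\bar{\nu},\bar{m}}$ inherits moments only separately in each variable, so the number of required moments $N$ must simultaneously absorb the worst-case smoothness deficit $t_+ + d/\min(p,1)$ in every direction. Once this is set up, the remainder of the argument is bookkeeping.
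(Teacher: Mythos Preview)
The paper does not prove this lemma at all: the sentence immediately preceding it reads ``Vybiral \cite{Vy1} has proved the following,'' and no argument is given. So there is nothing to compare against in the paper itself.

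Your outline is a reasonable sketch of the standard proof, and it is in fact close in spirit to how the result is established in \cite{Vy1}: one direction via an atomic decomposition (verifying that the $\Psi_{\bar{\nu},\bar{m}}$ are admissible atoms with the correct normalisation, support, smoothness and coordinatewise vanishing moments), the other via local means / Peetre-type maximal estimates exploiting the tensor structure of both the wavelets and the resolution of unity $\varphi_{\bar{j}}$. Two small remarks. First, the restriction $p<\infty$ in (ii) is simply because the $F$-scale is not defined for $p=\infty$ in this framework, not because Fefferman--Stein fails; your phrasing slightly obscures this. Second, in the mixed setting the moment/smoothness requirement on $N$ is imposed coordinatewise (roughly $N>\max(t,\sigma_p-t)$ in each variable, with $\sigma_p=1/\min(1,p)-1$ for the $B$-scale and $\sigma_{p,q}$ for the $F$-scale), not via a single isotropic quantity like $t_+ + d/\min(p,1)$; the tensor product structure means no factor of $d$ appears in the threshold for $N$. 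With those adjustments your plan matches the argument in \cite{Vy1}.
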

We proceed by defining Besov and Lizorkin-Triebel spaces of dominating mixed smoothness on $\Omega$ by restriction.

\begin{definition}Let $0<p,q \le \infty$ and $t \in\re$.
 \begin{description}
  \item(i) The space $S^{t}_{p,q}B(\Omega)$ is the collection of all $f\in
  D'(\Omega)$ such that there exists a distribution $g\in
   S^{t}_{p,q}B(\R)$ satisfying $f = g|_{\Omega}$ and is endowed with the norm
   $$
    \|f|S^{t}_{p,q}B(\Omega)\| = \inf\{\|g|S^{t}_{p,q}B(\R)\|~:~g|_{\Omega} =
    f\}\,.
   $$
   \item(ii) For $0<p<\infty$, the space $S^{t}_{p,q}F(\Omega)$ is the collection of all $f\in
     D'(\Omega)$ such that there exists a distribution $g\in 
     S^{t}_{p,q}B(\R)$ satisfying $f = g|_{\Omega}$ and is endowed with the norm
   $$
    \|f|S^{t}_{p,q}F(\Omega)\| = \inf\{\|g|S^{t}_{p,q}F(\R)\|~:~g|_{\Omega} =
    f\}\,.
   $$
 \end{description}
\end{definition}

We put
\be\label{ws-40}
A_{\bar{\nu}}^{\Omega}:= 
\Big\{\bar{m}\in \mathbb{Z}^d: \quad \supp \Psi_{\bar{\nu},\bar{m}} \cap\Omega \neq \emptyset\Big\}\, ,\qquad \bar{\nu}\in\mathbb{N}_0^d\, .
\ee
For given $ f \, \in S_{p,q}^t A(\Omega)$, $A\in \{B,F\}$, let $\ce f$ be an element of in $ S_{p,q}^t A(\R)$ s.t. 
\[
\| \, \ce f \, | S_{p,q}^t A(\R)\| \le 2 \, \| \, f \, | S_{p,q}^t A(\Omega)\|
\qquad \mbox{and} \qquad (\ce f)_{|_\Omega} = f \, .
\]
We define
\[
g:= \sum_{\bar{\nu} \in \N_0^d} \sum_{\bar{m} \in A_{\bar{\nu}}^{\Omega}} 2^{|\bar{\nu}|_1} \, \langle \ce f, \Psi_{\bar{\nu},\bar{m}} \rangle \, \Psi_{\bar{\nu}, \bar{m}}\, .
\]
Then it follows that $g \in S_{p,q}^t A(\R)$, $g_{|_\Omega} = f $, 
\[
\supp g \subset \{x \in \R: ~ \max_{j=1,...,d}|x_j| \le c_1\}\]
and 
\[
\| \, g \, | S_{p,q}^t A(\R)\| \le c_2 \, \| \, f \, | S_{p,q}^t A(\Omega)\|\, .
\]
Here $c_1,c_2$ are independent of $f$. For that reason, the sequence spaces associated with $\Omega$ are defined by


\begin{definition}
If $0<p\leq \infty$, $0< q\leq \infty$, $t\in \mathbb{R}$ and $ \lambda=\lbrace \lambda_{\bar{\nu},\bar{m}}\in\mathbb{C}:\bar{\nu}\in \mathbb{N}_0^d,\ \bar{m}\in A_{\bar{\nu}}^{\Omega} \rbrace$ then we define
$$s_{p,q}^{t,\Omega}b=\Big\lbrace \lambda: \| \lambda|s_{p,q}^{t,\Omega}b\| =\Big(\sum_{\bar{\nu}\in \mathbb{N}_0^d}2^{|\bar{\nu}|_1( t-\frac{1}{p})q}\big(\sum_{\bar{m}\in A_{\bar{\nu}}^{\Omega}}|\lambda_{\bar{\nu},\bar{m}} |^p\big)^{\frac{q}{p}}\Big)^{\frac{1}{q}}<\infty \Big\rbrace$$
and, if $p< \infty$, 
$$s_{p,q}^{t,\Omega}f=\Big\lbrace \lambda: \| \lambda|s_{p,q}^{t,\Omega}f\| =\Big\|\Big(\sum_{\bar{\nu}\in \mathbb{N}_0^d}\sum_{\bar{m}\in A_{\bar{\nu}}^{\Omega}}|2^{|\bar{\nu}|_1t}\lambda_{\bar{\nu},\bar{m}}\chi_{\bar{\nu},\bar{m}}(.) |^q\Big)^{\frac{1}{q}}\Big|L_p(\mathbb{R}^d)\Big\|<\infty \Big\rbrace.$$
\end{definition}
In addition we need the corresponding building blocks.
\begin{definition}
If $0<p\leq \infty$, $0< q\leq \infty$, $t\in \mathbb{R}$ and $\mu\in \mathbb{N}_0$ 
$$ \lambda=\lbrace \lambda_{\bar{\nu},\bar{m}}\in\mathbb{C}:\bar{\nu}\in \mathbb{N}_0^d,\ |\bar{\nu}|_1=\mu,\ \bar{k}\in  A_{\bar{\nu}}^{\Omega} \rbrace$$
then we define
$$(s_{p,q}^{t,\Omega}b)_{\mu}=\Big\lbrace \lambda: \| \lambda|(s_{p,q}^{t,\Omega}b)_{\mu}\| =\Big(\sum_{|\bar{\nu}|_1=\mu}2^{|\bar{\nu}|_1( t-\frac{1}{p})q}\big(\sum_{\bar{m}\in A_{\bar{\nu}}^{\Omega}}|\lambda_{\bar{\nu},\bar{m}} |^p\big)^{\frac{q}{p}}\Big)^{\frac{1}{q}}<\infty \Big\rbrace$$
and, if $p< \infty$, 
$$(s_{p,q}^{t,\Omega}f)_{\mu}=\Big\lbrace \lambda: \| \lambda|(s_{p,q}^{t,\Omega}f)_{\mu}\| =\Big\|\Big(\sum_{|\bar{\nu}|_1=\mu}\sum_{\bar{m}\in A_{\bar{\nu}}^{\Omega}}|2^{|\bar{\nu}|_1t}\lambda_{\bar{\nu},\bar{m}}\chi_{\bar{\nu},\bar{m}}(.) |^q\Big)^{\frac{1}{q}}\Big|L_p(\mathbb{R}^d)\Big\|<\infty \Big\rbrace.$$
\end{definition}
We recall some helpful results, see \cite{KiSi} and the references given there.
\begin{lemma}\label{ba1}
\begin{enumerate}
\item We have
\[
 |A_{\bar{\nu}}^{\Omega}| \asymp 2^{|\bar{\nu}|_1},\qquad D_{\mu}:= \sum_{|\bar{\nu}|_1=\mu} |A_{\bar{\nu}}^{\Omega}| \asymp \mu^{d-1}2^{\mu} 
\]
with equivalence constants independent of $\bar{\nu}\in \mathbb{N}_0^d$ and $\mu\in \mathbb{N}_0$. 
\item Let $0<p\leq \infty$ and $t\in \mathbb{R}$. Then
$$
(s_{p,p}^{t,\Omega}f)_{\mu}=(s_{p,p}^{t,\Omega}b)_{\mu} =2^{\mu(t-\frac{1}{p})}\ell_p^{D_{\mu}},\ \ \mu\in \mathbb{N}_0,$$
$$ s_{p,p}^{t,\Omega}f=s_{p,p}^{t,\Omega}b.$$
\end{enumerate}
\end{lemma}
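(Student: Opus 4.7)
The statement is a combinatorial/definitional lemma, so my plan is a direct verification rather than anything conceptual. I will prove (i) from compactness of wavelet supports together with a lattice-point count, and (ii) from unwinding the two sequence-space norms and comparing them term by term for $p=q$.

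For part (i), the key observation is that $\Psi_{\bar{\nu},\bar{m}}$ is a tensor product $\prod_\ell \psi_{\nu_\ell,m_\ell}$, so the support factors as a product of intervals. Since $\psi_0,\psi_1$ are compactly supported, there exist $R>0$ independent of $\bar{\nu},\bar{m}$ such that $\supp \psi_{\nu_\ell,m_\ell} \subset [2^{-\nu_\ell}m_\ell - R\,2^{-\nu_\ell},\,2^{-\nu_\ell}m_\ell + R\,2^{-\nu_\ell}]$. Hence $\supp \Psi_{\bar{\nu},\bar{m}}\cap \Omega \neq \emptyset$ iff each component interval meets $(0,1)$, and for each fixed $\nu_\ell$ the number of admissible $m_\ell \in \Z$ is comparable to $2^{\nu_\ell}$ (with implicit constants depending only on $R$ and hence on the wavelet). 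Taking the product over $\ell$ gives $|A_{\bar{\nu}}^\Omega| \asymp \prod_\ell 2^{\nu_\ell} = 2^{|\bar{\nu}|_1}$. For $D_\mu$ I then sum over $\bar{\nu}$ with $|\bar{\nu}|_1=\mu$:
\[
D_\mu = \sum_{|\bar{\nu}|_1=\mu} |A_{\bar{\nu}}^\Omega| \asymp 2^\mu \cdot \#\{\bar{\nu}\in\N_0^d : |\bar{\nu}|_1=\mu\} = 2^\mu \binom{\mu+d-1}{d-1} \asymp 2^\mu\,\mu^{d-1}.
\]

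For part (ii), I first establish the block-level identity $(s_{p,p}^{t,\Omega}b)_\mu = 2^{\mu(t-1/p)}\ell_p^{D_\mu}$: for a sequence supported on $\{|\bar{\nu}|_1=\mu\}$, the Besov norm with $q=p$ collapses to
\[
\|\lambda\,|\,(s_{p,p}^{t,\Omega}b)_\mu\|^p = \sum_{|\bar{\nu}|_1=\mu} 2^{\mu(t-1/p)p}\sum_{\bar{m}\in A_{\bar{\nu}}^\Omega}|\lambda_{\bar{\nu},\bar{m}}|^p = 2^{\mu(t-1/p)p}\sum_{|\bar{\nu}|_1=\mu}\sum_{\bar{m}\in A_{\bar{\nu}}^\Omega}|\lambda_{\bar{\nu},\bar{m}}|^p,
\]
where the double sum, after relabeling the $D_\mu$ indices, is exactly an $\ell_p$ norm on $\C^{D_\mu}$. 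For the $F$-side, I use that for each fixed $\bar{\nu}$ the cubes $\{Q_{\bar{\nu},\bar{m}}\}_{\bar{m}}$ are pairwise disjoint with $|Q_{\bar{\nu},\bar{m}}|=2^{-|\bar{\nu}|_1}$, so the inner $\ell_q$ expression (with $q=p$) is a sum of disjointly supported terms and the integration produces the same Lebesgue factor:
\[
\|\lambda\,|\,(s_{p,p}^{t,\Omega}f)_\mu\|^p = \sum_{|\bar{\nu}|_1=\mu}\sum_{\bar{m}} 2^{\mu tp}|\lambda_{\bar{\nu},\bar{m}}|^p \cdot 2^{-\mu} = 2^{\mu(t-1/p)p}\sum_{|\bar{\nu}|_1=\mu}\sum_{\bar{m}}|\lambda_{\bar{\nu},\bar{m}}|^p,
\]
matching the previous expression. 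Thus the two block spaces coincide (with equal norms). The full identity $s_{p,p}^{t,\Omega}f=s_{p,p}^{t,\Omega}b$ then follows from the same disjoint-support argument applied to the global sums, since for $p=q$ one may swap summation and integration without any inequality loss.

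The only step that requires a small amount of care is the count $|A_{\bar{\nu}}^\Omega|\asymp 2^{|\bar{\nu}|_1}$, because one must argue the lower bound (the upper bound being immediate from the support length being $\asymp 2^{-\nu_\ell}$). For the lower bound I note that the number of integers $m$ with $[2^{-\nu}m, 2^{-\nu}(m+1)]\subset(0,1)$ already contributes $\gtrsim 2^\nu - O(1)$, which for $\nu\geq 1$ gives the desired $\asymp 2^\nu$; the finitely many small-$\nu$ cases are absorbed into the implicit constant. Beyond that, everything is a book-keeping calculation, and I expect no genuine obstacle.
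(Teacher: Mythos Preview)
The paper does not prove this lemma; it merely cites it from \cite{KiSi} and the references therein. Your direct verification is correct and is the standard way to check such statements.

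One minor remark on part (ii): your appeal to disjointness of the cubes $Q_{\bar{\nu},\bar{m}}$ is accurate only for fixed $\bar{\nu}$; cubes attached to different $\bar{\nu}$ on the same level $|\bar{\nu}|_1=\mu$ do overlap. This does not affect your computation, however, because with $q=p$ no disjointness is needed: by Fubini,
\[
\Big\|\Big(\sum_{|\bar{\nu}|_1=\mu}\sum_{\bar{m}}|2^{\mu t}\lambda_{\bar{\nu},\bar{m}}\chi_{\bar{\nu},\bar{m}}|^p\Big)^{1/p}\Big|L_p\Big\|^p
=\sum_{|\bar{\nu}|_1=\mu}\sum_{\bar{m}}2^{\mu t p}|\lambda_{\bar{\nu},\bar{m}}|^p\int\chi_{\bar{\nu},\bar{m}}
=2^{\mu(tp-1)}\sum_{|\bar{\nu}|_1=\mu}\sum_{\bar{m}}|\lambda_{\bar{\nu},\bar{m}}|^p,
\]
which is exactly what you wrote. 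The same Fubini argument gives the global identity $s_{p,p}^{t,\Omega}f=s_{p,p}^{t,\Omega}b$ for $p<\infty$ (the $f$-scale is only defined for $p<\infty$).
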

\begin{lemma}\label{ba2-1}Let $0<p_1,q_1,p_2,q_2\leq \infty$, $t\in \mathbb{R}$ and $a\in \{b,f\}$. Then
\be
\| \, id_{\mu}^* \, : (s^{t,\Omega}_{p_1,q_1}a)_\mu \to (s^{0,\Omega}_{p_2,q_2}a)_\mu\|
\lesssim 2^{\mu\big(-t+(\frac{1}{p_1}-\frac{1}{p_2})_+\big)}\mu^{(d-1)(\frac{1}{q_2} - \frac{1}{q_1})_+},
\ee
with constant behind $\lesssim$ independent of $\mu\in \mathbb{N}_0$. 
\end{lemma}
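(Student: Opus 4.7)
The plan is to treat the $a=b$ and $a=f$ cases separately but with parallel strategies.

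I will begin with the $b$ case, which reduces directly to a classical mixed-norm sequence-space calculation. Since $|\bar\nu|_1=\mu$ is fixed inside the building block, the weight $2^{|\bar\nu|_1(t-1/p)}$ pulls out, identifying
\[
(s^{t,\Omega}_{p,q}b)_\mu \;=\; 2^{\mu(t-1/p)}\,\ell_q^{N_\mu}\big(\ell_p^{|A_{\bar\nu}^\Omega|}\big),
\]
where $N_\mu:=\#\{\bar\nu:|\bar\nu|_1=\mu\}\asymp\mu^{d-1}$ and $|A_{\bar\nu}^\Omega|\asymp 2^\mu$ by Lemma \ref{ba1}(i). Thus $\|id_\mu^*\|$ equals $2^{\mu(-t+1/p_1-1/p_2)}$ times the norm of the identity between two mixed-norm sequence spaces. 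Applying H\"older successively in the outer (size $\asymp\mu^{d-1}$) and inner (size $\asymp 2^\mu$) directions bounds this by $\mu^{(d-1)(1/q_2-1/q_1)_+}\cdot 2^{\mu(1/p_2-1/p_1)_+}$. The algebraic identity $(1/p_1-1/p_2)+(1/p_2-1/p_1)_+=(1/p_1-1/p_2)_+$ then collapses the exponents of $2$ and yields the claim.

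For the $f$ case, the diagonal $p_i=q_i$ is immediate: Lemma \ref{ba1}(ii) gives $(s^{t,\Omega}_{p,p}f)_\mu=2^{\mu(t-1/p)}\ell_p^{D_\mu}$ with $D_\mu\asymp\mu^{d-1}2^\mu$, and a single $\ell_p$-H\"older step reproduces the claim. For the general case, the pairwise disjointness of $\{Q_{\bar\nu,\bar m}\}_{\bar m}$ at fixed $\bar\nu$ gives, a.e.\ on $\Omega$,
\[
\sum_{|\bar\nu|_1=\mu}\sum_{\bar m\in A_{\bar\nu}^\Omega}|\lambda_{\bar\nu,\bar m}|^q\,\chi_{\bar\nu,\bar m}(x) \;=\; \sum_{|\bar\nu|_1=\mu}|F_{\bar\nu}(x)|^q,\qquad F_{\bar\nu}(x):=\sum_{\bar m}\lambda_{\bar\nu,\bar m}\chi_{\bar\nu,\bar m}(x),
\]
so the $f$-integrand at each $x$ is an $\ell_q$-norm of a vector of size $\asymp\mu^{d-1}$. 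A pointwise finite-dimensional H\"older inequality in the $\bar\nu$-variable extracts the factor $\mu^{(d-1)(1/q_2-1/q_1)_+}$, leaving the task of comparing the $L_{p_2}$- and $L_{p_1}$-norms of the single function $\big(\sum_{|\bar\nu|_1=\mu}|F_{\bar\nu}|^{q_1}\big)^{1/q_1}$.

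The heart of the proof is this final $L_p$ comparison, and it is the main obstacle. When $p_2\le p_1$ it is immediate from $L_{p_1}(\Omega)\hookrightarrow L_{p_2}(\Omega)$ on the unit cube. When $p_1<p_2$, each $F_{\bar\nu}$ is a step function on cubes of volume $2^{-\mu}$ satisfying the sharp single-block Bernstein estimate $\|F_{\bar\nu}\|_{L_{p_2}}\le 2^{\mu(1/p_1-1/p_2)}\|F_{\bar\nu}\|_{L_{p_1}}$, which follows at once from $\|F_{\bar\nu}\|_{L_p}=2^{-\mu/p}\|\lambda_{\bar\nu,\cdot}\|_{\ell_p^{|A_{\bar\nu}^\Omega|}}$. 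The difficulty is to lift this single-$\bar\nu$ Bernstein to the $\ell_{q_1}$-combination without losing a full power of $d$ in the exponent of $2$: passing naively to the common refinement (whose cubes have volume only $2^{-\mu d}$) gives the wrong factor $2^{\mu d(1/p_1-1/p_2)}$. The remedy is a case split according to the orderings of $p_1,q_1,p_2$, combining Minkowski's inequality in $L_{p/q}$ (when $p\ge q$) with the concavity identity $(\sum a_j)^{p/q}\le\sum a_j^{p/q}$ (when $p<q$). This careful bookkeeping, together with the single-block Bernstein, is where the bulk of the work lies.
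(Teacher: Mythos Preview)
The paper does not prove this lemma; it is quoted from \cite{KiSi}. So there is no ``paper's own proof'' to compare against, and I can only comment on the soundness of your argument.

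Your $b$-case is correct and standard: once the weight $2^{\mu(t-1/p)}$ is factored out, it is a two-step H\"older inequality on a finite mixed-norm sequence space. One cosmetic point: the $L_p$-norm in the definition of $(s^{t,\Omega}_{p,q}f)_\mu$ is taken over $\mathbb{R}^d$, not over $\Omega$, but since $\bar m$ ranges only over $A_{\bar\nu}^\Omega$ the integrand is supported in a fixed enlargement of $\Omega$, so your use of $L_{p_1}\hookrightarrow L_{p_2}$ on a bounded domain for $p_2\le p_1$ is justified.

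There is, however, a genuine gap in your $f$-case for $p_1<p_2$. Your Minkowski/concavity bookkeeping closes only when $p_1\le q_1\le p_2$: in that range one has both $L_{p_2}(\ell_{q_1})\hookrightarrow \ell_{q_1}(L_{p_2})$ (since $q_1\le p_2$) and $\ell_{q_1}(L_{p_1})\hookrightarrow L_{p_1}(\ell_{q_1})$ (since $p_1\le q_1$), and the single-block Bernstein inequality sits cleanly in between. When $q_1<p_1$ or $q_1>p_2$, one of these two Minkowski steps goes the wrong way, and any elementary routing (through $(s^{0}_{p_1,p_1}f)_\mu$, through $(s^{0}_{p_2,p_2}f)_\mu$, or via the trivial bound $\|G\|_{L_{p_1}}\ge\max_{\bar\nu}\|F_{\bar\nu}\|_{L_{p_1}}$) produces an extra power of $\mu^{d-1}$ that is \emph{not} present in the claimed estimate. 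The sharp bound in this regime is exactly the content of the next quoted result, Lemma~\ref{ba3}, which is a Jawerth--Franke type embedding for the building blocks and whose proof (in \cite{KiSi}) is not an elementary H\"older/Minkowski argument. For the purposes of the present paper this gap is harmless: every application of Lemma~\ref{ba2-1} to the $f$-scale here has $p_2\le p_1$, which your argument handles correctly, while the case $p_1<p_2$ is invoked only through Lemma~\ref{ba3}.
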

\begin{lemma}\label{ba3}
Let $0<p_1<p_2<\infty$, $0<q_1,q_2\leq \infty$, $t\in \mathbb{R}$. Then
\be \|id_{\mu}^*: (s_{p_1,q_1}^{t,\Omega}f)_{\mu}\to (s_{p_2,q_2}^{0,\Omega}f)_{\mu} \| \lesssim 2^{\mu(-t+\frac{1}{p_1}-\frac{1}{p_2})}
\ee
with constant behind $\lesssim$ independent of $\mu\in \mathbb{N}_0$. 
\end{lemma}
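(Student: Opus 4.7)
Since on the block $|\bar{\nu}|_1 = \mu$ the weight $2^{|\bar{\nu}|_1 t} = 2^{\mu t}$ is a constant, one has $\|\lambda\,|\,(s^{t,\Omega}_{p,q}f)_\mu\| = 2^{\mu t}\,\|\lambda\,|\,(s^{0,\Omega}_{p,q}f)_\mu\|$, and the lemma reduces to the case $t = 0$. In the sub-regime $q_1 \le q_2$ the required bound is immediate from Lemma \ref{ba2-1} applied with $a = f$: the logarithmic prefactor $\mu^{(d-1)(1/q_2-1/q_1)_+}$ vanishes and $(1/p_1 - 1/p_2)_+ = 1/p_1 - 1/p_2$, so only the case $q_1 > q_2$ carries genuinely new content.

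For $q_1 > q_2$ I would first use the pointwise monotonicity of the $\ell_q$-norm in the $\bar{\nu}$-variable to obtain the sandwich
\[
\|id^*_\mu : (s^{0,\Omega}_{p_1,q_1}f)_\mu \to (s^{0,\Omega}_{p_2,q_2}f)_\mu\| \le \|id^*_\mu : (s^{0,\Omega}_{p_1,\infty}f)_\mu \to (s^{0,\Omega}_{p_2,1}f)_\mu\|,
\]
so that it suffices to establish the extremal inequality
\[
\Big\|\sum_{|\bar{\nu}|_1 = \mu} g_{\bar{\nu}} \,\Big|\, L_{p_2}(\Omega)\Big\| \lesssim 2^{\mu(1/p_1 - 1/p_2)}\,\Big\|\max_{|\bar{\nu}|_1 = \mu} g_{\bar{\nu}} \,\Big|\, L_{p_1}(\Omega)\Big\|
\]
for the non-negative step functions $g_{\bar{\nu}} = \sum_{\bar{m} \in A^{\Omega}_{\bar{\nu}}} |\lambda_{\bar{\nu},\bar{m}}|\chi_{\bar{\nu},\bar{m}}$, each constant on cells of volume $2^{-\mu}$. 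The diagonal sub-case $q_1 = p_1$, $q_2 = p_2$ is handled at once by Lemma \ref{ba1}(ii): the identification $(s^{0,\Omega}_{p_i,p_i}f)_\mu = 2^{-\mu/p_i}\,\ell_{p_i}^{D_\mu}$ together with the canonical inclusion $\ell_{p_1}^{D_\mu}\hookrightarrow\ell_{p_2}^{D_\mu}$ of norm one produces exactly the constant $2^{\mu(1/p_1-1/p_2)}$. For the full extremal inequality one invokes a Franke--Jawerth-type argument at a single dyadic level, combining the per-$\bar{\nu}$ Nikolski estimate $\|g_{\bar{\nu}}\,|\,L_{p_2}\| \le 2^{\mu(1/p_1-1/p_2)}\,\|g_{\bar{\nu}}\,|\,L_{p_1}\|$ with a duality/distribution-function step that transfers the $L_p$-gain into the $\ell_q$-direction.

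The main obstacle is the aggregation across the $\binom{\mu+d-1}{d-1} \asymp \mu^{d-1}$ different indices $\bar{\nu}$ with $|\bar{\nu}|_1 = \mu$: a naive Minkowski triangle inequality in the extremal inequality would produce the spurious factor $\mu^{d-1}$, which cannot be hidden in a constant independent of $\mu$. The strict inequality $p_1 < p_2$ must therefore be exploited sharply --- for instance through a block-level Fefferman--Stein vector-valued maximal inequality on the rectangles $Q_{\bar{\nu},\bar{m}}$, or by real interpolation between the diagonal case treated above and the clean regime $q_1 \le q_2$ delivered by Lemma \ref{ba2-1} --- so that the final constant depends only on $p_1,p_2,q_1,q_2$ and $d$, and not on $\mu$.
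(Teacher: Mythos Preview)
The paper does not give its own proof of this lemma; it is quoted from \cite{KiSi}, which in turn rests on the Jawerth--Franke embeddings for the $f$-sequence spaces of dominating mixed smoothness established in \cite{Vy1} and \cite{HS3}. Your reduction to $t=0$, your handling of $q_1\le q_2$ via Lemma~\ref{ba2-1}, and your isolation of the extremal case $q_1=\infty$, $q_2$ small together with the obstacle of the $\asymp\mu^{d-1}$ indices $\bar\nu$ are all correct and match the structure of the literature argument.

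The genuine gap is that the hard case is only named, not executed, and neither of your two concrete suggestions closes it as stated. The Fefferman--Stein vector-valued maximal inequality controls the passage between different inner indices $q$ at \emph{fixed} $p$; here the issue is the change of $p$ across rectangles $Q_{\bar\nu,\bar m}$ whose eccentricities at level $\mu$ range over all ratios from $2^\mu\!:\!1$ to $1\!:\!2^\mu$, so the strong maximal operator is of no help. And real interpolation between the diagonal case $q_i=p_i$ and the regime $q_1\le q_2$ does not visibly enlarge the admissible set of pairs $(q_1,q_2)$: interpolating two bounded operators only yields boundedness on the interpolated couple, not on new endpoint pairs such as $(q_1,q_2)=(\infty,1)$. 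One clean route, following \cite{Vy1,HS3}, is to factor through an intermediate Besov block: for any $p_1<p<p_2$,
\[
(s^{0,\Omega}_{p_1,q_1}f)_\mu \hookrightarrow (s^{\frac1p-\frac1{p_1},\Omega}_{p,p_1}b)_\mu \hookrightarrow (s^{\frac1p-\frac1{p_1},\Omega}_{p,p_2}b)_\mu \hookrightarrow (s^{0,\Omega}_{p_2,q_2}f)_\mu,
\]
where the outer arrows are the block-level Jawerth and Franke embeddings (each of $\mu$-uniform norm $\lesssim 1$, proved by a decreasing-rearrangement argument --- this is your ``distribution-function step'', but it has to be carried out), the middle arrow is the trivial $\ell_{p_1}\hookrightarrow\ell_{p_2}$ in the outer index, and the smoothness deficit $\frac1{p_1}-\frac1{p_2}$ hidden in the last arrow produces exactly the factor $2^{\mu(1/p_1-1/p_2)}$.
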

\subsection{Upper bounds of nonlinear widths of sequence spaces related to spaces of dominating mixed smoothness}
In this subsection we shall deal with upper bounds of the nonlinear widths of the unit ball $U(s^{t,\Omega}_{p_1,p_1}b)$ in the space $s^{0,\Omega}_{p_2,2}f$ with the condition $t>(\frac{1}{p_1}-\frac{1}{\delta_2})_+$, where $\delta_2=\max(p_2,2)$. For this we define the embedding
$$id^*: s^{t,\Omega}_{p_1, p_1}b \rightarrow s^{0,\Omega}_{p_2, 2}f .$$
A few more notation is needed. For $\mu\in \N_0$ we define
$$\nabla_{\mu} =\{(\bar{\nu},\bar{m})\in \mathbb{N}_0^d\times \mathbb{Z}^d: |\bar{\nu}|_1=\mu,\ \bar{m}\in A_{\bar{\nu}}^{\Omega}\}.$$
Note that $|\nabla_{\mu} |=D_{\mu}\asymp 2^{\mu}\mu^{d-1}$. Furthermore, for $J\in \N$ we denote
$$S_J\lambda=\sum_{\mu=0}^{J}\sum_{(\bar{\nu},\bar{m})\in\nabla_{\mu}}\lambda_{\bar{\nu},\bar{m}}e^{\bar{\nu},\bar{m}},$$
where $\{e^{\bar{\nu},\bar{m}}: \bar{\nu}\in \mathbb{N}_0^d,\ \bar{m}\in A_{\bar{\nu}}^{\Omega}\}$ is the canonical orthonormal basis of 
$s^{0,\Omega}_{2,2}b$ and $ \lambda_{\bar{\nu},\bar{m}}=\langle \lambda, e^{\bar{\nu},\bar{m}} \rangle$.
\begin{lemma}\label{non1-2}
Let  $J\in \mathbb{N}$.
\begin{enumerate} 
\item If $0 < p_1 < p_2 < \infty$ and $t> \frac{1}{p_1} - \frac{1}{p_2}$, we have 
\be\label{bdt3}
\|\, id^* - S_J\, | s^{t,\Omega}_{p_1, p_1}b \rightarrow s^{0,\Omega}_{p_2, 2}f\| 
 \lesssim
2^{J(-t+\frac{1}{p_1}-\frac{1}{p_2})},
\ee
with constant behind $\lesssim$ independent of $J$. 
\item If $0<p_2\leq p_1\leq \infty$ and $t>0$, we have
\be \label{bdt4}
\| id^* - S_J | s^{t,\Omega}_{p_1, p_1}b \rightarrow s^{0,\Omega}_{p_2, 2}f\| 
 \lesssim
2^{-Jt}J^{(d-1)(\frac{1}{2}-\frac{1}{p_1})_+},
\ee
with constant behind $\lesssim$ independent of $J$. 
\end{enumerate}
\end{lemma}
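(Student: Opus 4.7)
The plan is to decompose $(id^*-S_J)\lambda = \sum_{\mu>J}\lambda_\mu$, where $\lambda_\mu$ denotes the restriction of $\lambda$ to the indices $(\bar{\nu},\bar{m})$ with $|\bar{\nu}|_1=\mu$, and to attack the problem in two steps. First, I would establish a sharp per-level operator norm of $id_\mu^*:(s^{t,\Omega}_{p_1,p_1}b)_\mu \to (s^{0,\Omega}_{p_2,2}f)_\mu$. Then I would aggregate these bounds over $\mu>J$ via a triangle-type inequality in the target quasi-norm, followed by H\"older's inequality in the source, observing that the resulting tail sum is geometrically convergent and dominated by its first term.

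For the per-level bound, the key observation is that Lemma \ref{ba1}(ii) identifies $(s^{t,\Omega}_{p_1,p_1}b)_\mu$ and $(s^{t,\Omega}_{p_1,p_1}f)_\mu$ with equivalent norms, which lets me invoke the embedding lemmas with $a=f$ in \emph{both} source and target. In case (i), Lemma \ref{ba3} with $q_1=p_1,q_2=2$ yields
$$
\|id_\mu^*:(s^{t,\Omega}_{p_1,p_1}b)_\mu \to (s^{0,\Omega}_{p_2,2}f)_\mu\| \lesssim 2^{\mu(-t+\frac{1}{p_1}-\frac{1}{p_2})},
$$
and in case (ii), Lemma \ref{ba2-1} with $a=f,q_1=p_1,q_2=2$ yields
$$
\|id_\mu^*:(s^{t,\Omega}_{p_1,p_1}b)_\mu \to (s^{0,\Omega}_{p_2,2}f)_\mu\| \lesssim 2^{-\mu t}\mu^{(d-1)(\frac{1}{2}-\frac{1}{p_1})_+}.
$$
The main obstacle lies precisely here for case (ii) when $p_2<2$: the naive route through $(s^{0,\Omega}_{p_2,2}f)_\mu \hookrightarrow (s^{0,\Omega}_{p_2,p_2}b)_\mu$ and Lemma \ref{ba2-1} with $a=b$ introduces a spurious $\mu^{(d-1)(\frac{1}{p_2}-\frac{1}{p_1})_+}$ factor that exceeds the required one; bypassing this requires the $a=f$ substitution together with the identification from Lemma \ref{ba1}(ii) to return to the prescribed source.

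For the aggregation step, set $\sigma:=\min(p_2,2)$ and $V_\mu:=\sum_{|\bar{\nu}|_1=\mu}\sum_{\bar{m}}|\lambda_{\bar{\nu},\bar{m}}|^2\chi_{\bar{\nu},\bar{m}}$. The identity $\|\lambda|s^{0,\Omega}_{p_2,2}f\|=\|(\sum_\mu V_\mu)^{1/2}|L_{p_2}\|$ combined with the $\min(1,p_2/2)$-subadditivity of $L_{p_2/2}$ gives
$$
\Big\|\sum_{\mu>J}\lambda_\mu\,\Big|\,s^{0,\Omega}_{p_2,2}f\Big\| \le \Big(\sum_{\mu>J}\|\lambda_\mu|(s^{0,\Omega}_{p_2,2}f)_\mu\|^\sigma\Big)^{1/\sigma}.
$$
Writing $A_\mu$ for the per-level bound from the previous paragraph and $c_\mu:=\|\lambda_\mu|(s^{t,\Omega}_{p_1,p_1}b)_\mu\|$ (so $\sum_\mu c_\mu^{p_1}=\|\lambda|s^{t,\Omega}_{p_1,p_1}b\|^{p_1}$), separating the source from the weight by H\"older's inequality with exponents $p_1/\sigma$ and $p_1/(p_1-\sigma)$ when $p_1>\sigma$ (or by the embedding $\ell_{p_1}\hookrightarrow \ell_\sigma$ when $p_1\le\sigma$, with the obvious modifications at $p_1=\infty$) leaves a factor $\bigl(\sum_{\mu>J}A_\mu^r\bigr)^{1/r}$ for some $r>0$. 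Under the hypothesis $t>\frac{1}{p_1}-\frac{1}{p_2}$ in (i) and $t>0$ in (ii), this tail is a geometric series asymptotically equivalent to $A_{J+1}$, which delivers exactly the bounds \eqref{bdt3} and \eqref{bdt4}.
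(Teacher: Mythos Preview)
Your proof is correct and follows essentially the same route as the paper: decompose $(id^*-S_J)\lambda$ into levels $\mu>J$, bound each level via Lemma~\ref{ba3} (case (i)) or Lemma~\ref{ba2-1} with $a=f$ (case (ii)) after identifying $(s^{t,\Omega}_{p_1,p_1}b)_\mu=(s^{t,\Omega}_{p_1,p_1}f)_\mu$ through Lemma~\ref{ba1}(ii), and sum the geometric tail. The paper compresses the aggregation to the single line $\|id^*-S_J\|\lesssim\sum_{\mu>J}\|id_\mu^*\|$, whereas your $\sigma$-subadditivity plus H\"older argument is more explicit (and covers the quasi-Banach range $p_2<1$ cleanly); note, however, that the H\"older step is not actually needed, since the trivial projection bound $c_\mu\le\|\lambda|s^{t,\Omega}_{p_1,p_1}b\|$ already lets you pull the source norm out and reduces everything to $\bigl(\sum_{\mu>J}A_\mu^\sigma\bigr)^{1/\sigma}\asymp A_{J+1}$.
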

\begin{proof} Let $J\in \mathbb{N}$. Obviously,
$$\|\, id^*-S_J\, | s^{t,\Omega}_{p_1, p_1}b \rightarrow s^{0,\Omega}_{p_2, 2}f\| \lesssim \sum_{\mu=J+1}^{\infty} \|id_{\mu}^*: (s^{t,\Omega}_{p_1, p_1}b)_{\mu} \to (s^{0,\Omega}_{p_2, 2}f)_{\mu}\|.$$
From Lemma \ref{ba3} and $(s^{t,\Omega}_{p_1, p_1}b )_{\mu}=(s^{t,\Omega}_{p_1, p_1}f )_{\mu}$ we obtain
$$\|\, id^*-S_J\, | s^{t,\Omega}_{p_1, p_1}b \rightarrow s^{0,\Omega}_{p_2, 2}f\| \lesssim \sum_{\mu=J+1}^{\infty} 2^{\mu(-t+\frac{1}{p_1}-\frac{1}{p_2})} \lesssim 2^{J(-t+\frac{1}{p_1}-\frac{1}{p_2})}.$$
This proves \eqref{bdt3}. By making use of Lemma \ref{ba2-1} and a similar argument as above we derive the inequality (\ref{bdt4}) as well.
\end{proof}
Employing $\text{rank} S_J\asymp 2^J\, J^{d-1}$ and the monotonicity of the approximation numbers Lemma \ref{non1-2} yields the following results.
\begin{corollary}\label{appro}
Let $t>(\frac{1}{p_1} - \frac{1}{p_2})_+$.
\begin{enumerate}
\item If $0 < p_1 < p_2 < \infty$, we have
$$a_n(id^*: s^{t,\Omega}_{p_1, p_1}b \rightarrow s^{0,\Omega}_{p_2, 2}f )\lesssim n^{-t+ \frac{1}{p_1} - \frac{1}{p_2}}\, 
(\log n)^{(d-1)(t - \frac{1}{p_1} + \frac{1}{p_2})} ,$$
for all $n\geq 2$.
\item If $0 < p_2\leq \delta_2 \leq p_1 \leq \infty$, we have
$$a_n(id^*: s^{t,\Omega}_{p_1, p_1}b \rightarrow s^{0,\Omega}_{p_2, 2}f )\lesssim n^{-t}\, (\log n)^{(d-1)(t - \frac{1}{p_1} + \frac{1}{2})},$$
for all $n\geq 2$.
\end{enumerate}
\end{corollary}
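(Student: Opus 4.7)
The plan is to exploit monotonicity of the approximation numbers: for every $A \in \mathcal{L}(X,Y)$ with $\rank(A) < n$ one has $a_n(T) \le \|T - A\|$. In both parts of the corollary I will take $A = S_J$ with $J \in \N$ the largest integer satisfying $\rank(S_J) < n$, and then invoke the norm estimates already established in Lemma~\ref{non1-2}.

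To calibrate $J$, recall from Lemma~\ref{ba1} that
\begin{equation*}
\rank(S_J) \;=\; \sum_{\mu=0}^{J} D_\mu \;\asymp\; 2^{J} J^{d-1}\, .
\end{equation*}
For $n$ large enough the chosen $J$ thus satisfies $J \asymp \log n$ and, after a short asymptotic inversion,
\begin{equation*}
2^{J} \;\asymp\; n\,(\log n)^{-(d-1)}\, ,
\end{equation*}
with constants depending only on $d$. (For the finitely many small $n$ the claimed bounds exceed an absolute positive constant and are trivially fulfilled after enlarging the implicit constant.)

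For part~(i), combining $a_n(id^*) \le \|id^* - S_J\|$ with Lemma~\ref{non1-2}(i) gives
\begin{equation*}
a_n(id^*) \;\lesssim\; 2^{J(-t + \frac{1}{p_1} - \frac{1}{p_2})} \;\asymp\; \bigl(n(\log n)^{-(d-1)}\bigr)^{-t + \frac{1}{p_1} - \frac{1}{p_2}} \;=\; n^{-t + \frac{1}{p_1} - \frac{1}{p_2}}(\log n)^{(d-1)(t - \frac{1}{p_1} + \frac{1}{p_2})}\, ,
\end{equation*}
the $n$-exponent being negative thanks to the hypothesis $t > 1/p_1 - 1/p_2$ (so that maximising $J$ does produce the best bound). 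For part~(ii), the assumption $\delta_2 = \max(p_2,2) \le p_1$ forces $p_1 \ge 2$, whence $(1/2 - 1/p_1)_+ = 1/2 - 1/p_1$; Lemma~\ref{non1-2}(ii) then yields
\begin{equation*}
a_n(id^*) \;\lesssim\; 2^{-Jt}\,J^{(d-1)(\frac{1}{2} - \frac{1}{p_1})} \;\asymp\; n^{-t}(\log n)^{(d-1)t}(\log n)^{(d-1)(\frac{1}{2} - \frac{1}{p_1})} \;=\; n^{-t}(\log n)^{(d-1)(t - \frac{1}{p_1} + \frac{1}{2})}\, ,
\end{equation*}
exactly the claimed bound.

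There is no essential obstacle: once Lemma~\ref{non1-2} is in hand both bounds reduce to a one-line substitution. The only point requiring a minimum of care is the asymptotic inversion $n \asymp 2^{J} J^{d-1} \Longleftrightarrow J \asymp \log n,\ 2^{J} \asymp n\,(\log n)^{-(d-1)}$, which has to be carried out so that the implicit constants are uniform in $n$; once this is done, the two cases are completely parallel.
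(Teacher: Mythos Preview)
Your proof is correct and follows exactly the approach the paper indicates: use $\rank S_J \asymp 2^J J^{d-1}$ together with the monotonicity of approximation numbers, then apply the two norm estimates of Lemma~\ref{non1-2} and perform the asymptotic inversion $n\asymp 2^J J^{d-1}\Leftrightarrow 2^J\asymp n(\log n)^{-(d-1)}$. There is nothing to add.
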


\begin{lemma}
\label{up-width}
 Let $p_1<\delta_2$ and $t>\frac{1}{p_1}-\frac{1}{\delta_2}$. Then for each $J\in \mathbb{N}$, there is an positive integer $K=K(J)$ such that 
$$ \| id^*- S_K |s^{t,\Omega}_{p_1, p_1}b \rightarrow s^{0,\Omega}_{p_2, 2}f\|\lesssim  2^{-Jt}J^{(d-1)(\frac{1}{2}-\frac{1}{p_1})},$$
with constant independent of $J$ and $K$. 
\end{lemma}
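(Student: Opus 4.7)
The plan is to deduce the lemma by a case split driven by the hypothesis $p_1<\delta_2=\max(p_2,2)$ and to choose $K=K(J)$ so that the relevant bound from Lemma~\ref{non1-2} matches the target $2^{-Jt}J^{(d-1)(\frac{1}{2}-\frac{1}{p_1})}$. Under $p_1<\delta_2$ exactly two regions appear: \textbf{Case (a)} $p_1<p_2$ (automatically $p_2<\infty$, since $s^{0,\Omega}_{p_2,2}f$ requires $p_2<\infty$), and \textbf{Case (b)} $p_2\le p_1<\delta_2$, which forces $\delta_2=2$ and hence $p_1<2$. In both cases the hypothesis $t>\frac{1}{p_1}-\frac{1}{\delta_2}$ majorizes what the corresponding part of Lemma~\ref{non1-2} demands.

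In Case~(a), Lemma~\ref{non1-2}(i) applies with $\gamma:=t-\frac{1}{p_1}+\frac{1}{p_2}>0$, giving
\[
\|id^*-S_K\,|\,s^{t,\Omega}_{p_1,p_1}b\to s^{0,\Omega}_{p_2,2}f\|\;\lesssim\;2^{-K\gamma}.
\]
Comparing $2^{-K\gamma}$ with $2^{-Jt}J^{(d-1)(\frac12-\frac{1}{p_1})}$ and solving, I take
\[
K\;=\;K(J)\;:=\;\Bigl\lceil\frac{Jt-(d-1)\bigl(\tfrac12-\tfrac{1}{p_1}\bigr)\log_2 J}{\gamma}\Bigr\rceil,
\]
which is a positive integer for all $J\in\N$ (large $J$, say; small $J$ are handled by absorbing into the constant). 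The ceiling costs only a factor $2^{\gamma}$, so the desired estimate follows.

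In Case~(b), Lemma~\ref{non1-2}(ii) yields $\|id^*-S_K\|\lesssim 2^{-Kt}K^{(d-1)(\frac12-\frac{1}{p_1})_+}=2^{-Kt}$, since $p_1<2$ kills the $K$-power. Setting
\[
K\;=\;K(J)\;:=\;\Bigl\lceil J+\frac{(d-1)\bigl(\tfrac{1}{p_1}-\tfrac12\bigr)}{t}\log_2 J\Bigr\rceil
\]
gives $2^{-Kt}\le 2^{t}\cdot 2^{-Jt}J^{(d-1)(\frac12-\frac{1}{p_1})}$, which is the required bound (note $\frac{1}{p_1}-\frac12>0$ here, so indeed $K\ge J$).

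The argument is essentially algebraic once the correct piece of Lemma~\ref{non1-2} is invoked, so no substantial obstacle appears. The only point that requires a moment of care is the case analysis: one must observe that $p_1<\delta_2$ together with either $p_1<p_2$ or $p_2\le p_1$ covers all admissible parameters, and that in the second region the constraint $p_1<\delta_2$ automatically produces $p_1<2$, which is precisely what forces the $(\cdot)_+$ exponent in Lemma~\ref{non1-2}(ii) to vanish and hence lets the bound match the target shape.
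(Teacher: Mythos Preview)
Your proof is correct and follows essentially the same route as the paper: invoke Lemma~\ref{non1-2} to obtain exponential decay of $\|id^*-S_L\|$ in $L$, then choose $L=K(J)$ large enough so that this decaying bound falls below the target $2^{-Jt}J^{(d-1)(\frac12-\frac{1}{p_1})}$. The only difference is cosmetic: the paper unifies the two cases of Lemma~\ref{non1-2} into a single line $\|id^*-S_L\|\lesssim 2^{L(-t+(\frac{1}{p_1}-\frac{1}{p_2})_+)}L^{(d-1)\alpha}$ and appeals to the existence of a sufficiently large $L$, whereas you split cases and write down explicit formulas for $K(J)$.
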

\begin{proof}
Let $J\in \mathbb{N}$. For $L\in \mathbb{N}$, from Lem \ref{non1-2} we conclude
 $$\| id^*- S_L |s^{t,\Omega}_{p_1, p_1}b \rightarrow s^{0,\Omega}_{p_2, 2}f\|\lesssim 2^{L(-t+(\frac{1}{p_1}-\frac{1}{p_2})_+)}L^{(d-1)\alpha}$$
for some $\alpha=\alpha(t,p_1)$. Because of $-t+(\frac{1}{p_1}-\frac{1}{p_2})_+<0 $ we can choose $L$ large enough such that 
$$2^{L(-t+(\frac{1}{p_1}-\frac{1}{p_2})_+)}L^{(d-1)\alpha}\leq 2^{-Jt}J^{(d-1)(\frac{1}{2}-\frac{1}{p_1})}. $$
Then we obtain
\beq
\| id^*- S_L |s^{t,\Omega}_{p_1, p_1}b \rightarrow s^{0,\Omega}_{p_2, 2}f\|\lesssim 2^{-Jt}J^{(d-1)(\frac{1}{2}-\frac{1}{p_1})}.
\label{tem}\eeq
Now we choose $K$ as the smallest number $L$ satisfying the inequality (\ref{tem}). The proof is complete.
\end{proof}
\begin{proposition}\label{non2}
Let $p_1<\delta_2$ and $t>\frac{1}{p_1}-\frac{1}{\delta_2}$. Then for all $\lambda\in s^{t,\Omega}_{p_1, p_1}b$, $\|\lambda|s^{t,\Omega}_{p_1, p_1}b \|\leq 1$ and $J\in \mathbb{N}$ there exists an approximation of the form
$$S_K^*\lambda=\sum_{\mu=0}^{K}\sum_{(\bar{\nu},\bar{m})\in \nabla_{\mu}}\lambda_{\bar{\nu},\bar{m}}^*e^{\bar{\nu},\bar{m}}$$
and two positive constants $c_0$, $c_1$ independent of $J$ and $K$ such that
$$|\{\lambda_{\bar{\nu},\bar{m}}^*: \lambda_{\bar{\nu},\bar{m}}^*\not =0 \}|\leq c_02^JJ^{d-1}$$
and 
$$\|\lambda-S_K^*\lambda| s^{0,\Omega}_{p_2,2}f\| \leq c_1 2^{-Jt}J^{(d-1)(\frac{1}{2}-\frac{1}{p_1})}.$$
In addition, $\lambda_{\bar{\nu},\bar{m}}^*$ depends continuously on $\lambda \in U(s^{t,\Omega}_{p_1, p_1}b) $.
\end{proposition}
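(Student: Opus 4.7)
The strategy is to start from the linear truncation $S_K\lambda$ given by Lemma~\ref{up-width} and then compress it by a continuous thresholding of the wavelet coefficients, so that the number of active terms drops from the linear count $\sum_{\mu\le K} D_\mu\asymp 2^K K^{d-1}$ down to $\lesssim 2^J J^{d-1}$ without ruining the error bound. This is in the spirit of the DeVore--Howard--Micchelli / Dung nonlinear--width constructions already cited in the paper.

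Concretely, fix $K=K(J)$ as in Lemma~\ref{up-width} so that $\|\lambda-S_K\lambda|s^{0,\Omega}_{p_2,2}f\|\lesssim 2^{-Jt}J^{(d-1)(\frac12-\frac1{p_1})}$ uniformly on $U(s^{t,\Omega}_{p_1,p_1}b)$. Choose numbers $\tau_\mu\ge 0$ that depend only on $J$ and $\mu$, with $\tau_\mu=0$ for $\mu\le J$, and for $J<\mu\le K$ pick $\tau_\mu$ so that by Chebyshev together with the bound
\[
\sum_{|\bar\nu|_1=\mu}\sum_{\bar m\in A^{\Omega}_{\bar\nu}}|\lambda_{\bar\nu,\bar m}|^{p_1}\le 2^{\mu(1-tp_1)},
\]
at most $2^J J^{d-1}/(K-J)$ coefficients in level $\mu$ can exceed $\tau_\mu$. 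Define $\lambda^{*}_{\bar\nu,\bar m}:=\phi_{\tau_\mu}(\lambda_{\bar\nu,\bar m})$ for $|\bar\nu|_1=\mu\le K$ and $\lambda^{*}_{\bar\nu,\bar m}:=0$ for $|\bar\nu|_1>K$, where $\phi_\tau(z)=\mathrm{sign}(z)\max(|z|-\tau,0)$ is the standard soft--thresholding. Set $S_K^*\lambda:=\sum_{\mu=0}^K\sum_{(\bar\nu,\bar m)\in\nabla_\mu}\lambda^*_{\bar\nu,\bar m}\,e^{\bar\nu,\bar m}$.

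Cardinality: levels $\mu\le J$ contribute $\sum_{\mu\le J}D_\mu\asymp 2^J J^{d-1}$ nonzero entries, and levels $J<\mu\le K$ contribute in total at most $(K-J)\cdot 2^J J^{d-1}/(K-J)=2^J J^{d-1}$, giving the required bound $c_0 2^J J^{d-1}$. Continuity is automatic, because $\tau_\mu$ is independent of $\lambda$ and $\phi_{\tau_\mu}$ is continuous on $\C$; hence $\lambda\mapsto\lambda^*$ is continuous on $U(s^{t,\Omega}_{p_1,p_1}b)$.

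For the error, split
\[
\|\lambda-S_K^*\lambda|s^{0,\Omega}_{p_2,2}f\|\;\le\;\|\lambda-S_K\lambda|s^{0,\Omega}_{p_2,2}f\|\;+\;\sum_{\mu=J+1}^{K}\bigl\|(S_K\lambda-S_K^*\lambda)_\mu\,\big|\,(s^{0,\Omega}_{p_2,2}f)_\mu\bigr\|.
\]
The first term is handled by Lemma~\ref{up-width}. For each residual block $(S_K\lambda-S_K^*\lambda)_\mu$ we use that soft--thresholding leaves the $\ell_\infty$--norm $\le\tau_\mu$ and the $\ell_{p_1}$--norm $\le 2^{\mu(1/p_1-t)}$; interpolation between these two bounds, combined with the block embedding estimates of Lemmas~\ref{ba2-1} and \ref{ba3} into $(s^{0,\Omega}_{p_2,2}f)_\mu$, produces the correct geometric decay in $\mu$, and the calibrated choice of $\tau_\mu$ (which is possible precisely because $t>\frac{1}{p_1}-\frac{1}{\delta_2}$) makes the geometric sum over $J<\mu\le K$ also $\lesssim 2^{-Jt}J^{(d-1)(\frac12-\frac1{p_1})}$.

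\textbf{Main obstacle.}
The delicate point is the error estimate for the thresholded block in the $s^{0,\Omega}_{p_2,2}f$--norm: because this space has a \emph{pointwise} $\ell_2$ aggregation across $\bar\nu$ with common $|\bar\nu|_1$ before the $L_{p_2}$--integration, the Jawerth--type embeddings of Lemmas~\ref{ba2-1}--\ref{ba3} must be combined with the $\ell_\infty$/$\ell_{p_1}$ interpolation in such a way that the power of $\mu$ produced at the end is exactly $\mu^{(d-1)(\frac12-\frac1{p_1})}$. Balancing the threshold $\tau_\mu$ so that the cardinality budget and the error budget are simultaneously met — in all parameter cases contained in the hypothesis $p_1<\delta_2$, $t>\frac1{p_1}-\frac1{\delta_2}$ — is the heart of the argument.
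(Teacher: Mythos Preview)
Your overall strategy matches the paper's: truncate via $S_K$ using Lemma~\ref{up-width}, threshold the remaining levels $J < \mu \le K$ by a continuous cutoff depending only on $\mu$ and $J$, control the cardinality via Chebyshev, and bound the thresholding error by interpolating between the $\ell_\infty$ bound (coming from the threshold size) and the $\ell_{p_1}$ bound (coming from membership in the unit ball). The paper does exactly this, with a threshold function very close to your soft thresholding.

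The gap is in your specific choice of threshold. Allocating an \emph{equal} budget of $2^J J^{d-1}/(K-J)$ nonzero coefficients to each level $J < \mu \le K$ forces $\tau_\mu$ to be too large at the low levels near $\mu = J+1$. Concretely, your choice gives $\tau_\mu^{p_1} \asymp 2^{\mu(1-tp_1)}(K-J)/(2^J J^{d-1})$; if you carry out the interpolation in $\ell_{\delta_2}$ exactly as you describe and sum, the thresholding error comes out as
\[
\sum_{\mu=J+1}^{K}\|T_\mu|(s^{0,\Omega}_{p_2,2}f)_\mu\|
\;\lesssim\; 2^{-Jt}\, J^{(d-1)(\frac12-\frac{1}{p_1})}\,(K-J)^{\frac{1}{p_1}-\frac{1}{\delta_2}}\, ,
\]
and since $K-J \asymp J$ by the construction in Lemma~\ref{up-width}, this exceeds the claimed bound by the factor $J^{1/p_1 - 1/\delta_2}$. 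The paper instead uses a \emph{geometrically decaying} budget: it takes the explicit threshold $\epsilon_\mu = 2^{\mu\alpha} 2^{J\beta} \mu^{-(d-1)/p_1}$ with $\alpha = -t + \tfrac{1}{p_1} + \vartheta$, $\beta = -\tfrac{1}{p_1} - \vartheta$ and $\vartheta = \tfrac{1}{2}\,(t - \tfrac{1}{p_1} + \tfrac{1}{\delta_2})/(1 - \tfrac{p_1}{\delta_2})$, so that the level-$\mu$ cardinality is $\lesssim 2^J\, 2^{\vartheta p_1 (J-\mu)}\, \mu^{d-1}$ and the level-$\mu$ error is $\lesssim 2^{-\frac{J}{2}(t+\frac{1}{p_1}-\frac{1}{\delta_2})}\, 2^{-\frac{\mu}{2}(t-\frac{1}{p_1}+\frac{1}{\delta_2})}\, \mu^{(d-1)(\frac12-\frac{1}{p_1})}$; both sums then converge to the stated bounds. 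So the ``calibration'' you correctly flag as the heart of the argument genuinely requires an unequal, geometric allocation across the levels, not the uniform split you wrote down.
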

\begin{proof} We fix $J\in \mathbb{N}$.\\
{\it Step 1.} For arbitrary $K>J$ we define
$$\varLambda_{\mu}=\{(\bar{\nu},\bar{m})\in \nabla_{\mu}: |\lambda_{\bar{\nu},\bar{m}}|\geq \epsilon_{\mu}\},$$
where
$$
\epsilon_{\mu}=
\begin{cases}
0& \text{if }\ 0\leq \mu\leq J,\\
2^{\mu\alpha}2^{J\beta}\mu^{-\frac{d-1}{p_1}}&\text{if }\ J<\mu\leq K
\end{cases}
$$
and
$$ \alpha=-t+\frac{1}{p_1}+\vartheta,\qquad \beta=-\frac{1}{p_1}-\vartheta ,\qquad \vartheta=\frac{1}{2}\frac{t-\frac{1}{p_1}+\frac{1}{\delta_2}}{1-\frac{p_1}{\delta_2}}.$$
Observe that $\vartheta>0$. For $J<\mu\leq K$, this leads to the estimate
\beqq
|\varLambda_{\mu}|&=&\sum_{(\bar{\nu},\bar{m})\in \varLambda_{\mu}}1 \leq \sum_{(\bar{\nu},\bar{m})\in \varLambda_{\mu}}\frac{|\lambda_{\bar{\nu},\bar{m}}|^{p_1}}{\epsilon_{\mu}^{p_1}} \\
&\leq&\epsilon_{\mu}^{-p_1}2^{-\mu(t-\frac{1}{p_1})p_1}\sum_{|\bar{\nu}|=\mu}2^{\mu(t-\frac{1}{p_1})p_1}\sum_{\bar{m}\in A_{\bar{\nu}}^{\Omega}}|\lambda_{\bar{\nu},\bar{m}}|^{p_1}\\
& =& \epsilon_{\mu}^{-p_1}2^{-\mu(t-\frac{1}{p_1})p_1}\| id_{\mu}^*\lambda| (s_{p_1,p_1}^{t,\Omega}b)_{\mu}\|^{p_1}\\
& \leq& \epsilon_{\mu}^{-p_1}2^{-\mu(t-\frac{1}{p_1})p_1} ,
\eeqq
 
\noindent
where $id_{\mu}^*$ denotes the canonical projection of $s_{p_1,p_1}^{t,\Omega}b$ onto $(s_{p_1,p_1}^{t,\Omega}b)_{\mu}$. Inserting the definition of $\epsilon_{\mu}$ into the last inequality we obtain 
$$ |\varLambda_{\mu}|\lesssim 2^J 2^{\vartheta p_1(J-\mu)}\mu^{(d-1)}.$$
Consequently
\beqq
\sum_{\mu=0}^{K}|\varLambda_{\mu}| 
& \lesssim& \sum_{\mu=0}^{J}|\nabla_{\mu}|+ \sum_{\mu=J+1}^{K}|\varLambda_{\mu}|\\
& \lesssim& 2^JJ^{d-1}+ 2^J\sum_{\mu=J+1}^{K}2^{\vartheta p_1(J-\mu)}\mu^{(d-1)}\\
& \lesssim & 2^JJ^{d-1},
\eeqq
with the constant independent of $K>J$.\\
{\it Step 2.} We define the function $\varphi_{\mu}: \C\to \C$ as follows. If $0\leq \mu\leq J$ we put
$$\varphi_{\mu}(z)=z,\ \ \text{ for all }z\in \C, $$
 if $J< \mu\leq K$ and $z=|z|e^{i\theta} \in \C$, then

 \be\label{phi-z}
\varphi_{\mu}(z)= \left\{
 \begin{array}{lll}
 0 & \text{ if }& 0\leq |z|\leq \epsilon_{\mu},\\
 (2|z|-2\epsilon_{\mu})e^{i\theta} & \text{ if }& \epsilon_{\mu}<|z|\leq 2\epsilon_{\mu},\\
 z & \text{ if }& 2\epsilon_{\mu}<|z|.
 \end{array}
 \right.
 \ee
Note that $\varphi_{\mu}$ is a continuous function on $\C$ and satisfies the inequality
\be
|\varphi_{\mu}(z)-z|\leq |z|,\ \ \forall z\in \C \ \  \text{and} \ 0\leq \mu\leq K.\label{bdt5}
\ee
We define
$$ S_K^*\lambda=\sum_{\mu=0}^{K}\sum_{(\bar{\nu},\bar{m})\in \nabla_{\mu}}\lambda_{\bar{\nu},\bar{m}}^*e^{\bar{\nu},\bar{m}},$$
and
$$\lambda_{\bar{\nu},\bar{m}}^*=\varphi_{\mu}(\lambda_{\bar{\nu},\bar{m}}),\ \ (\bar{\nu},\bar{m})\in \nabla_{\mu},\ \mu=0,...,K.$$
The functionals $\lambda_{\bar{\nu},\bar{m}}^*$ depend continuously on $\lambda\in U( s^{t,\Omega}_{p_1, p_1}b)$ since both, $\varphi_{\mu}$ and $\lambda_{\bar{\nu},\bar{m}}$, depend continuously on $\lambda$. \\
{\it Step 3.} Now we estimate the norm $\|\lambda-S_K^*\lambda| s^{0,\Omega}_{p_2,2}f\| $. We define
$$T_{\mu}=\sum_{(\bar{\nu},\bar{m})\in \nabla_{\mu}}(\lambda_{\bar{\nu},\bar{m}}-\lambda_{\bar{\nu},\bar{m}}^*)e^{\bar{\nu},\bar{m}},\ \ 0\leq \mu\leq K.$$
Note that $T_{\mu}=0$ if $0\leq \mu\leq J$. This yields
\beq\label{ws-1}
\|\lambda-S_K^*\lambda|s^{0,\Omega}_{p_2,2}f\|&\leq &
\|\lambda-S_K\lambda|s^{0,\Omega}_{p_2,2}f \|+\|
S_K \lambda-S_K^*\lambda|s^{0,\Omega}_{p_2,2}f \|\\ \nonumber
&\leq& \|\lambda-S_K\lambda|s^{0,\Omega}_{p_2,2}f \|+ \sum_{\mu=J+1}^{K}\|T_{\mu}| (s^{0,\Omega}_{p_2,2}f)_{\mu}\|.
\eeq
For $J+1\leq \mu\leq K$ we have
\beqq
\|T_{\mu} | (s^{0,\Omega}_{p_2, 2}f)_{\mu} \|&\leq& \mu^{(d-1)(\frac{1}{2}-\frac{1}{\delta_2})} \|T_{\mu}| (s_{\delta_2,\delta_2}^{0,\Omega}f)_{\mu} \|\\
&=&\mu^{(d-1)(\frac{1}{2}-\frac{1}{\delta_2})}2^{-\frac{\mu}{\delta_2}}\Big(\sum_{(\bar{\nu},\bar{m})\in \nabla_{\mu}}|\lambda_{\bar{\nu},\bar{m}}-\lambda_{\bar{\nu},\bar{m}}^*|^{\delta_2}\Big)^{\frac{1}{\delta_2}}.
\eeqq
Since $\lambda_{\bar{\nu},\bar{m}}=\lambda_{\bar{\nu},\bar{m}}^*$ whenever $|\lambda_{\bar{\nu},\bar{m}}|>2\epsilon_{\mu}$, we obtain
$$|\lambda_{\bar{\nu},\bar{m}}-\lambda_{\bar{\nu},\bar{m}}^*|^{\delta_2}\leq |\lambda_{\bar{\nu},\bar{m}}|^{\delta_2}=|\lambda_{\bar{\nu},\bar{m}}|^{\delta_2-p_1}|\lambda_{\bar{\nu},\bar{m}}|^{p_1}\leq (2\epsilon_{\mu})^{\delta_2-p_1}|\lambda_{\bar{\nu},\bar{m}}|^{p_1} ,$$
see \eqref{bdt5} and \eqref{phi-z}. Therefore
\beqq
\|T_{\mu} | (s^{0,\Omega}_{p_2, 2}f)_{\mu} \| 
&\lesssim& \mu^{(d-1)(\frac{1}{2}-\frac{1}{\delta_2})}2^{-\frac{\mu}{\delta_2}} (\epsilon_{\mu})^{1-\frac{p_1}{\delta_2}}\Big(\sum_{(\bar{\nu},\bar{m})\in \nabla_{\mu}}|\lambda_{\bar{\nu},\bar{m}}|^{p_1}\Big)^{\frac{1}{\delta_2}}\\
&= &\mu^{(d-1)(\frac{1}{2}-\frac{1}{\delta_2})}2^{-\frac{\mu}{\delta_2}} (\epsilon_{\mu})^{1-\frac{p_1}{\delta_2}}2^{-\mu(t-\frac{1}{p_1})\frac{p_1}{\delta_2}}\|id_{\mu}^*\lambda| (s_{p_1,p_1}^{t,\Omega}b)_{\mu} \|^{\frac{p_1}{\delta_2}}\\
& \lesssim& \mu^{(d-1)(\frac{1}{2}-\frac{1}{\delta_2})} (\epsilon_{\mu})^{1-\frac{p_1}{\delta_2}}2^{-\mu t\frac{p_1}{\delta_2}} .
\eeqq
Employing the definition of $\epsilon_{\mu}$ we obtain
$$\|T_{\mu} | (s^{0,\Omega}_{p_2, 2}f)_{\mu} \| \lesssim
\mu^{(d-1)(\frac{1}{2}-\frac{1}{\delta_2})}  (2^{\mu\alpha}2^{J\beta}\mu^{-\frac{d-1}{p_1}})^{1-\frac{p_1}{\delta_2}}2^{-\mu t\frac{p_1}{\delta_2}} .
$$
Observe that
$$\alpha\Big(1-\frac{p_1}{\delta_2}\Big)=-\frac{1}{2}\Big(t-\frac{1}{p_1}+\frac{1}{\delta_2}\Big)+\frac{tp_1}{\delta_2}\ \text{ and }\ \  \beta\Big(1-\frac{p_1}{\delta_2}\Big)= -\frac{1}{2}\Big(t+\frac{1}{p_1}-\frac{1}{\delta_2}\Big).$$
This results in the estimate
$$\|T_{\mu} | (s^{0,\Omega}_{p_2, 2}f)_{\mu} \| \lesssim
 2^{-\frac{J}{2}(t+\frac{1}{p_1}-\frac{1}{\delta_2})}\mu^{(d-1)(\frac{1}{2}-\frac{1}{p_1})} 2^{-\frac{\mu}{2}(t-\frac{1}{p_1}+\frac{1}{\delta_2})}.$$
Summarizing we have found
\beqq
\sum_{\mu=J+1}^{K}\|T_{\mu} | (s^{0,\Omega}_{p_2, 2}f)_{\mu} \| &\lesssim& 2^{-\frac{J}{2}(t+\frac{1}{p_1}-\frac{1}{\delta_2})}\sum_{\mu=J+1}^{K} \mu^{(d-1)(\frac{1}{2}-\frac{1}{p_1})} 2^{-\frac{\mu}{2}(t-\frac{1}{p_1}+\frac{1}{\delta_2})}\\
&\lesssim& 2^{-Jt}J^{(d-1)(\frac{1}{2}-\frac{1}{p_1})},
\eeqq
with the constant independent of $K>J$. Inserting this into (\ref{ws-1}) and employing Lemma \ref{up-width} we finish our proof. 
\end{proof}
\begin{remark}
\rm The proof given above is a combination of ideas from \cite[Proposition 5.4]{HS3} and \cite{Devo1}.
\end{remark}
\begin{theorem}\label{width2}
Let $0<p_1\leq \infty$, $0<p_2<\infty$ and $t>(\frac{1}{p_1}-\frac{1}{\delta_2})_+$. Then
$$w_n(U(s^{t,\Omega}_{p_1, p_1}b), s^{0,\Omega}_{p_2,2}f)\lesssim n^{-t}\log n^{(d-1)(t-\frac{1}{p_1}+\frac{1}{2})}$$
for all $n\geq 2$.
\end{theorem}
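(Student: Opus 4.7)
By Lemma \ref{equiv}, $w_n \asymp \delta_n$, so it suffices to control the manifold width $\delta_n$. When $p_1 \geq \delta_2$, Corollary \ref{appro}(ii) directly gives the approximation-number estimate $a_n(id^*)\lesssim n^{-t}(\log n)^{(d-1)(t-\frac{1}{p_1}+\frac{1}{2})}$, and since any rank-$(n-1)$ linear approximation factors continuously through $\mathbb{R}^{n-1}$, one has $\delta_{n-1}\leq a_n$, yielding the claim. I therefore concentrate on the remaining case $p_1<\delta_2$.

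Given $n\geq 2$, choose $J\in\N$ so that $c_0\,2^J J^{d-1}\asymp n$; this forces $J\asymp\log n$ and $2^{-Jt}\asymp n^{-t}(\log n)^{(d-1)t}$. Let $K=K(J)$ be as in Lemma \ref{up-width}. Proposition \ref{non2} then yields a continuous map $\lambda\mapsto S_K^*\lambda$ from $U(s^{t,\Omega}_{p_1,p_1}b)$ into the finite-dimensional subspace $V:=\Span\{e^{\bar{\nu},\bar{m}}:|\bar{\nu}|_1\leq K\}\subset s^{0,\Omega}_{p_2,2}f$, whose image lies in the bounded set $\Sigma$ of at most $c_0\,2^J J^{d-1}$-sparse vectors in $V$, and which satisfies
\[
\|\,\lambda-S_K^*\lambda\,|\,s^{0,\Omega}_{p_2,2}f\|\;\leq\;c_1\,2^{-Jt}J^{(d-1)(\frac{1}{2}-\frac{1}{p_1})}\;\asymp\;n^{-t}(\log n)^{(d-1)(t-\frac{1}{p_1}+\frac{1}{2})}.
\]

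The remaining step is topological: to encode $S_K^*\lambda\in\Sigma$ via a continuous pair $a:U(s^{t,\Omega}_{p_1,p_1}b)\to\mathbb{R}^N$, $M:\mathbb{R}^N\to s^{0,\Omega}_{p_2,2}f$ with $N\asymp n$. Since $\Sigma$ intersected with a uniform ball in $V$ (obtained from the hypothesis $\|\lambda|s^{t,\Omega}_{p_1,p_1}b\|\leq 1$) is a finite union of $c_0\,2^J J^{d-1}$-dimensional affine pieces, it is a compact metric space of topological covering dimension $\lesssim n$. The Menger--N\"obeling embedding theorem supplies a topological embedding $\Phi:\Sigma\hookrightarrow \mathbb{R}^N$ with $N\asymp n$, and the Tietze extension theorem promotes $\Phi^{-1}$ to a continuous map $M:\mathbb{R}^N\to V$; the pair $(\Phi\circ S_K^*, M)$ then witnesses the claimed bound for $\delta_N$, and absorbing the constant into the choice of $J$ yields the estimate for $\delta_n$. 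The main obstacle is precisely this topological encoding; a cleaner alternative is to bound the Alexandroff width $a_n$ directly by covering $\Sigma$ with an explicit finite simplicial complex of dimension $\asymp n$, obtained by triangulating each coordinate subspace $V_\Gamma$ for $|\Gamma|\leq c_0\,2^J J^{d-1}$ and identifying common faces, and then to invoke Lemma \ref{equiv} once more. All the substantive analytic work has been carried out in Proposition \ref{non2}; what remains is topological bookkeeping.
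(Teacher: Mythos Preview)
Your proposal is correct and shares the paper's analytic core: the case $p_1\geq\delta_2$ is handled via Corollary~\ref{appro}, and for $p_1<\delta_2$ the real work is Proposition~\ref{non2}, which you invoke exactly as the paper does. The difference lies only in the topological encoding step. You bound the manifold width $\delta_n$ by appealing to the Menger--N\"obeling embedding theorem (to push the compact sparse set $\Sigma$ into some $\mathbb{R}^N$ with $N\asymp n$) and Tietze/Dugundji extension (to recover a continuous left inverse). This is legitimate: $\Sigma$ is a compact metric space, being a finite union of closed balls in coordinate subspaces of real dimension $\leq 2c_0 2^J J^{d-1}$, and the countable sum theorem gives $\dim\Sigma\lesssim n$; the extension target $V$ is finite-dimensional, so coordinatewise Tietze suffices.

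The paper, by contrast, avoids these black boxes and bounds the Alexandroff co-width $a^n$ directly by an explicit complex: it sends $\lambda$ to the vector $G(\lambda)\in\mathbb{R}^k$ of real and imaginary parts of the $\lambda^*_{\bar{\nu},\bar{m}}$, and observes that any such vector with at most $c_0 n$ nonzero entries lies in the $(c_0 n-1)$-skeleton of a scaled cross-polytope with vertices $\{0,\pm B e_i\}$. Then $G(\lambda_1)=G(\lambda_2)$ forces $S_K^*\lambda_1=S_K^*\lambda_2$, so preimages have diameter $\lesssim 2^{-Jt}J^{(d-1)(1/2-1/p_1)}$, and Lemma~\ref{equiv} finishes. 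This is precisely the ``cleaner alternative'' you sketch at the end, made concrete; it is more elementary than Menger--N\"obeling and follows the original argument of DeVore, Kyriazis, Leviatan and Tikhomirov. Either route is acceptable, but the explicit complex is self-contained and better suited to this setting.
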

\begin{proof} The assertion for the case $p_1\geq \delta_2$ is an immediate consequence of Corollary \ref{appro} and $w_n\leq a_n$. For the case $p_1< \delta_2$ we follow the argument in \cite{Devo1}. For each $n_J=[c_02^JJ^{d-1}]$, $J\in \mathbb{N}$ and $K=K(J)$ as in Lemma \ref{up-width}, we denote $k=2\sum_{\mu=0}^{K}|\nabla_{\mu}|$. We associate to each $\lambda\in U(s^{t,\Omega}_{p_1,p_1}b)$ a vector $G(\lambda)\in \mathbb{R}^{k}$ whose coordinates are the real and the imaginary part of the coefficients $\lambda_{\bar{\nu},\bar{m}}^*(\lambda)$ in $S^*_K\lambda$. The coefficients appear in $G(\lambda)$ in lexicographic order. Here the real and the imaginary part of the coefficients $\lambda_{\bar{\nu},\bar{m}}^*(\lambda)$ are adjacent. Each vector $G(\lambda)$ has at most $c_0n$ non-zero coordinates, see Proposition \ref{non2}. Since the set $U(s^{t,\Omega}_{p_1, p_1}b) $ is bounded in $s^{0,\Omega}_{p_2,2}f$, there exists a constant $A>0$ such that
$$|\lambda^*_{\bar{\nu},\bar{m}}(\lambda)|\leq |\lambda_{\bar{\nu},\bar{m}}(\lambda)|\leq 2^{\frac{|\bar{\nu}|}{p_2}}\| \lambda|s^{0,\Omega}_{p_2,2}f\| \leq A2^{\frac{K}{p_2}},\ \ \forall \lambda\in U(s^{t,\Omega}_{p_1, p_1}b).$$
Let us denote by $e_i$ the coordinate vectors in $\mathbb{R}^{k}$. Then we define $x_i=Be_i$ and $x_{-i}=-Be_i$, where $B>0$. We consider the set of points 
$$\mathcal{N}=\{x_{-k},...,x_{-1},0,x_1,....,x_k\}.$$ 
Let $\mathcal{T}$ be the collection of all sets $T\subset \{-k,...,-1,0,1,...,k\}$ such that the elements in $T$ are in general position and $|T|\leq c_0n$. Then $\mathscr{C}=\mathscr{C}(\mathcal{N};\mathcal{T})$ is a complex of dimension $c_0n-1$ in $\mathbb{R}^{k}$. If we choose $B$ large enough, then for each $\lambda\in U(s^{t,\Omega}_{p_1, p_1}b)$ there exists an $T\in \mathcal{T}$ such that $G(\lambda)\in \text{conv}\{x_j\}_{j\in T}$. That means, $G$ is a continuous function mapping $U(s^{t,\Omega}_{p_1, p_1}b)$ into $\mathscr{C}$. If $\lambda_1, \lambda_2\in U(s^{t,\Omega}_{p_1, p_1}b)$ and $G(\lambda_1)=G(\lambda_2)$, that is $S^*_K\lambda_1=S^*_K\lambda_2$, from Proposition \ref{non2} we obtain
\beqq
\|\lambda_1-\lambda_2| s^{0,\Omega}_{p_2,2}f \|&\leq & \|\lambda_1-S^*_K\lambda_1| s^{0,\Omega}_{p_2,2}f\|+\|\lambda_2-S^*_K\lambda_2| s^{0,\Omega}_{p_2,2}f\|\\
&\lesssim& 2^{-Jt}J^{(d-1)(\frac{1}{2}-\frac{1}{p_1})}.
\eeqq
Consequently $$\text{diam}(G^{-1}(G(a)))\lesssim 2^{-Jt}J^{(d-1)(\frac{1}{2}-\frac{1}{p_1})}.$$
This implies
$$ a^{n_J}(U(s^{t,\Omega}_{p_1, p_1}b)|s^{0,\Omega}_{p_2,2}f) \lesssim 2^{-Jt}J^{(d-1)(\frac{1}{2}-\frac{1}{p_1})}.$$
From the monotonicity of the nonlinear co-widths we conclude
\[
a^{[c_02^JJ^{d-1}]}(U(s^{t,\Omega}_{p_1, p_1}b)|s^{0,\Omega}_{p_2,2}f) \lesssim \log (c_0  \, J^{d-1}\, 2^J)^{(d-1)(\frac{1}{2}-\frac{1}{p_1})}\, 
\Big(\frac{c_0 \, J^{d-1}\, 2^J}{\log^{d-1} (c_0  \, J^{d-1}\, 2^J)}\Big)^{- t} \, .
\]
Again the monotonicity of the nonlinear co-widths allows us to switch 
from the subsequence $([c_0  \, J^{d-1}\, 2^J])_J$ to $n\in \N$ in this formula by possibly changing the constant behind $\lesssim$. Finally, in view of the equivalence of the nonlinear widths, see Lemma \ref{equiv}, the claim follows.
\end{proof}

\subsection{Bernstein numbers of sequence spaces related to spaces of dominating mixed smoothness}\label{bern}
In this subsection we investigate the asymptotic behaviour of Bernstein numbers of the embedding $$id^*:s^{t,\Omega}_{p_1, p_1}b \rightarrow s^{0,\Omega}_{p_2, 2}f. $$
\begin{lemma}
For all $\mu \in \N_0$ and all $n \in \N$ we have 
\begin{equation}\label{eqlow1}
b_n(id_{\mu}^*:(s^{t,\Omega}_{p_1, p_1}b)_\mu \rightarrow (s^{0,\Omega}_{p_2, 2}f)_\mu)\leq b_n(id^*:s^{t,\Omega}_{p_1, p_1}b \rightarrow s^{0,\Omega}_{p_2, 2}f) \, .
\end{equation}
\end{lemma}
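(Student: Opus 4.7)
The plan is to realize $id_\mu^*$ as a factorization of $id^*$ through natural norm-one maps and then invoke the ideal property $(s3)$ of Bernstein numbers (which holds even though Bernstein numbers fail to be $s$-numbers in Pietsch's original sense).

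First I would introduce the isometric inclusion
\[
j_\mu: (s^{t,\Omega}_{p_1,p_1}b)_\mu \to s^{t,\Omega}_{p_1,p_1}b
\]
that sends a sequence $\lambda = \{\lambda_{\bar{\nu},\bar{m}}\}_{(\bar{\nu},\bar{m}) \in \nabla_\mu}$ to the same sequence extended by zero on all levels $|\bar{\nu}'|_1 \ne \mu$. Unwinding the definition of $\|\cdot|s^{t,\Omega}_{p_1,p_1}b\|$, the outer sum over $\bar{\nu}$ collapses to the single level $|\bar{\nu}|_1 = \mu$, so $\|j_\mu \lambda|s^{t,\Omega}_{p_1,p_1}b\| = \|\lambda|(s^{t,\Omega}_{p_1,p_1}b)_\mu\|$ and therefore $\|j_\mu\| = 1$.

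Next I would introduce the restriction map
\[
P_\mu: s^{0,\Omega}_{p_2,2}f \to (s^{0,\Omega}_{p_2,2}f)_\mu, \qquad (P_\mu \lambda)_{\bar{\nu},\bar{m}} = \lambda_{\bar{\nu},\bar{m}} \text{ for } |\bar{\nu}|_1 = \mu.
\]
Since the integrand $\sum_{\bar{\nu},\bar{m}} |\lambda_{\bar{\nu},\bar{m}}|^2 \chi_{\bar{\nu},\bar{m}}(x)$ defining $\|\lambda|s^{0,\Omega}_{p_2,2}f\|$ is the sum of the analogous integrand for $P_\mu \lambda$ and nonnegative remaining terms, we have $\|P_\mu \lambda|(s^{0,\Omega}_{p_2,2}f)_\mu\| \le \|\lambda|s^{0,\Omega}_{p_2,2}f\|$, i.e. $\|P_\mu\| \le 1$.

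With these two maps in place, the factorization $id_\mu^* = P_\mu \circ id^* \circ j_\mu$ is immediate: starting from $\lambda \in (s^{t,\Omega}_{p_1,p_1}b)_\mu$, $j_\mu$ embeds it unchanged into the full sequence space, $id^*$ leaves the coefficients untouched as it is merely an embedding of sequence spaces, and $P_\mu$ recovers the original level-$\mu$ coefficients. Applying property $(s3)$, which Bernstein numbers do satisfy as recalled in Section~\ref{sec2}, yields
\[
b_n(id_\mu^*) \le \|P_\mu\|\cdot b_n(id^*)\cdot \|j_\mu\| \le b_n(id^*),
\]
which is exactly \eqref{eqlow1}. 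The only nontrivial point in the whole argument is the verification that $j_\mu$ and $P_\mu$ are indeed norm-one; once that is settled, the result drops out of $(s3)$ and there is no genuine obstacle.
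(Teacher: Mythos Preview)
Your proof is correct and follows exactly the same route as the paper: factor $id_\mu^* = P_\mu \circ id^* \circ j_\mu$ through the canonical embedding and projection (which the paper calls $id^1$ and $id^2$) and apply the ideal property $(s3)$ for Bernstein numbers. You give more detail than the paper in verifying $\|j_\mu\|=1$ and $\|P_\mu\|\le 1$, but the argument is otherwise identical.
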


\begin{proof}
We consider the following diagram
\be\label{f5}
\begin{CD}
s^{t,\Omega}_{p_1,p_1}b @ > id^* >> s^{0,\Omega}_{p_2,2}f\\
@A id^1 AA @VV id^2 V\\
(s^{t,\Omega}_{p_1,p_1}b)_\mu @ > id_{\mu}^* >> (s^{0,\Omega}_{p_2,2}f)_\mu \, .
\end{CD}
\ee
Here $id^1$ is the canonical embedding and $id^2$ is the canonical projection.
Since $id_{\mu}^* = id_2 \circ id^* \circ id_1$ the property $(s3)$ yields
$$
b_n(id_{\mu}^*) \leq \| \, id^1\, \| \, \|\, id^2 \, \| \, b_n(id^*) = b_n(id^*)\, .
$$
This completes the proof.
\end{proof}

\begin{lemma}
Let $0<p_1\leq \infty$, $t\in \mathbb{R}$ and $\mu\in \mathbb{N}_0$
\begin{enumerate}
\item If $0< p_2\leq 2$ it holds
\begin{equation}\label{case1}
\mu^{(d-1)(-\frac{1}{p_2}+\frac{1}{2})}2^{\mu(-t+\frac{1}{p_1}-\frac{1}{p_2})}b_n(id_{p_1,p_2}^{D_{\mu}} )\lesssim b_n(id^*).
\end{equation}
\item If $2\leq p_2<\infty$ and $p\in [2,p_2]$ it holds
\be\label{case4}
\ 2^{\mu( -t+\frac{1}{p_1} -\frac{1}{p})}b_n(id_{p_1,p}^{D_{\mu}} )\lesssim b_n(id^*).
\ee
\end{enumerate}
\end{lemma}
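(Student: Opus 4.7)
My plan is to reduce both inequalities to the finite-dimensional $\ell_p$-level via the already-established bound (\ref{eqlow1}) and the ideal property $(s3)$ of Bernstein numbers. Concretely, I would factor the identity $id^{D_\mu}_{p_1,r}$ (with $r=p_2$ in case (i) and $r=p$ in case (ii)) through $id_\mu^*$ by means of the canonical coordinate-identification maps
\[
A:\ \ell_{p_1}^{D_\mu}\to (s^{t,\Omega}_{p_1,p_1}b)_\mu,\qquad B:\ (s^{0,\Omega}_{p_2,2}f)_\mu\to \ell_r^{D_\mu},
\]
so that $B\circ id_\mu^*\circ A = id^{D_\mu}_{p_1,r}$. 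Property $(s3)$ then gives
\[
b_n(id^{D_\mu}_{p_1,r})\ \leq\ \|A\|\,\|B\|\,b_n(id_\mu^*)\ \leq\ \|A\|\,\|B\|\,b_n(id^*),
\]
and the task is merely to compute the two operator norms and rearrange.

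The first norm is immediate from Lemma~\ref{ba1}(ii): since $(s^{t,\Omega}_{p_1,p_1}b)_\mu=2^{\mu(t-1/p_1)}\ell_{p_1}^{D_\mu}$ isometrically, we have $\|A\|=2^{\mu(t-1/p_1)}$. For $\|B\|$, the same lemma gives $(s^{0,\Omega}_{r,r}f)_\mu=(s^{0,\Omega}_{r,r}b)_\mu=2^{-\mu/r}\ell_r^{D_\mu}$; hence it suffices to bound the norm of the identity $(s^{0,\Omega}_{p_2,2}f)_\mu\to (s^{0,\Omega}_{r,r}f)_\mu$, since $\|B\|$ is just $2^{\mu/r}$ times that norm. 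In both cases this norm is supplied by Lemma~\ref{ba2-1}.

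In case (i) take $r=p_2\leq 2$. Applying Lemma~\ref{ba2-1} with matching $p_1=p_2$ and $q_1=2$, $q_2=p_2$, the exponent of $\mu$ is $(1/p_2-1/2)_+ = 1/p_2-1/2$, and the $2^\mu$-factor disappears, yielding
\[
\|id:\ (s^{0,\Omega}_{p_2,2}f)_\mu\to (s^{0,\Omega}_{p_2,p_2}f)_\mu\|\ \lesssim\ \mu^{(d-1)(1/p_2-1/2)}.
\]
Consequently $\|B\|\lesssim 2^{\mu/p_2}\mu^{(d-1)(1/p_2-1/2)}$, and substitution into the factorization produces exactly (\ref{case1}). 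In case (ii) take $r=p\in[2,p_2]$; now $p\leq p_2$ kills $(1/p_2-1/p)_+$ and $p\geq 2$ kills $(1/p-1/2)_+$, so Lemma~\ref{ba2-1} gives $\|id:(s^{0,\Omega}_{p_2,2}f)_\mu\to (s^{0,\Omega}_{p,p}f)_\mu\|\lesssim 1$, whence $\|B\|\lesssim 2^{\mu/p}$ and rearranging gives (\ref{case4}).

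The only delicate point is choosing the right intermediate sequence space: one must pass to the Lizorkin-Triebel $f$-norm with $p=q=r$ so that Lemma~\ref{ba1}(ii) converts the building-block space to a bare $\ell_r$, and then invoke Lemma~\ref{ba2-1} for the correct Jawerth-type embedding at a single level $\mu$. The factor $\mu^{(d-1)(1/p_2-1/2)}$ is precisely the cost of this conversion in the range $p_2\leq 2$, and it is exactly this factor which produces the logarithmic correction $\mu^{(d-1)(-1/p_2+1/2)}$ on the right-hand side of (\ref{case1}); in case (ii) the conversion is lossless because the target index $p\geq 2$ matches the inner index $2$ of the $f$-norm.
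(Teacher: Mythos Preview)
Your proof is correct and essentially identical to the paper's own argument: both factor through $id_\mu^*$, invoke (\ref{eqlow1}) and property $(s3)$, use Lemma~\ref{ba1}(ii) to identify $(s^{t,\Omega}_{p_1,p_1}b)_\mu$ and $(s^{0,\Omega}_{r,r}f)_\mu$ with weighted $\ell_p^{D_\mu}$-spaces, and use Lemma~\ref{ba2-1} to bound the norm of $(s^{0,\Omega}_{p_2,2}f)_\mu\to(s^{0,\Omega}_{r,r}f)_\mu$. The paper phrases this via triangular diagrams with $id^1$ and $id^2$ rather than explicitly naming maps $A$ and $B$, but the content and the computations coincide.
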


\begin{proof}
{\em Step 1.} Proof of (i). We use the following commutative diagram

\tikzset{node distance=4cm, auto}

\begin{center}
\begin{tikzpicture}
 \node (H) {$(s^{t,\Omega}_{p_1,p_1}b)_\mu $};
 \node (L) [right of =H] {$(s^{0,\Omega}_{p_2,p_2}f)_\mu$};
 \node (L2) [right of =H, below of =H, node distance = 2cm ] {$(s^{0,\Omega}_{p_2,2}f)_\mu$};
 \draw[->] (H) to node {$id^2$} (L);
 \draw[->] (H) to node [swap] {$id_\mu^*$} (L2);
 \draw[->] (L2) to node [swap] {$id^1$} (L);
 \end{tikzpicture}
\end{center}

\noindent
This time we have $ b_n(id^2)\leq \|\, id^1\, \|\, b_n(id_{\mu}^*) $. By Lemma \ref{ba2-1}, we obtain
$$ \| \, id^1\, \| \lesssim \mu^{(d-1)(\frac{1}{p_2}-\frac{1}{2})}\, . $$
Lemma \ref{ba1} (ii) yields
\[ 
b_n(id^2)\gtrsim 2^{\mu( -t+\frac{1}{p_1} -\frac{1}{p_2})} \, b_n (id_{p_1,p_2}^{D_{\mu}})\, . 
\]
Inserting this in the previous estimate we find
\begin{equation}\label{low1}
b_n(id^*_{\mu})\gtrsim \mu^{(d-1)(\frac{1}{2}-\frac{1}{p_2})} \, 2^{\mu( -t+\frac{1}{p_1} -\frac{1}{p_2})}
\, b_n (id_{p_1,p_2}^{D_{\mu}})\, .
\end{equation}
From (\ref{eqlow1}) we obtain \eqref{case1}.\\
{\em Step 2.} Let $2\leq p\leq p_2$. We consider the diagram

\tikzset{node distance=4cm, auto}

\begin{center}
\begin{tikzpicture}
 \node (H) {$(s^{t,\Omega}_{p_1,p_1}b)_\mu $};
 \node (L) [right of =H] {$(s^{0,\Omega}_{p,p}f)_\mu $};
 \node (L2) [right of =H, below of =H, node distance = 2cm ] {$(s^{0,\Omega}_{p_2,2}f)_\mu$};
 \draw[->] (H) to node {$id^2$} (L);
 \draw[->] (H) to node [swap] {$id^*_\mu$} (L2);
 \draw[->] (L2) to node [swap]{$id^1$} (L);
 \end{tikzpicture}
\end{center}
\noindent
Because of $ b_n(id^2)\leq \| \, id^1\, \| \, b_n(id_{\mu}^*) $, $ \| \, id^1\, \| \lesssim 1$, see Lemma \ref{ba2-1}, 
and 
$$ b_n(id^2)\gtrsim 2^{\mu( -t+\frac{1}{p_1} -\frac{1}{p})} \, b_n(id_{p_1,p}^{D_\mu}) \, , $$
see Lemma \ref{ba1} (ii), we obtain
$$
b_n(id_{\mu}^*)\gtrsim 2^{\mu( -t+\frac{1}{p_1} -\frac{1}{p})}\, b_n(id_{p_1,p}^{D_\mu})\, . 
$$
Now employing (\ref{eqlow1}), we have inequality (\ref{case4}). The proof is complete.
\end{proof}
To proceed we recall the behaviour of Weyl numbers $x_n(id^*: s_{p_1,p_1}^{t,\Omega} b\rightarrow s_{p_2,2}^{0,\Omega}f)$, see \cite{KiSi}.

\begin{proposition}\label{weyl2}
Let $1\leq p_1\leq \infty$, $1\leq p_2<\infty$ and $t>(\frac{1}{p_1}-\frac{1}{p_2})_+$. Then
$$x_n(id^*: s_{p_1,p_1}^{t,\Omega} b\rightarrow s_{p_2,2}^{0,\Omega}f)\asymp \varphi_2(n,t,p_1,p_2)=n^{-\alpha}(\log n)^{(d-1)\beta},\ \ n\geq 2,$$
where


\begin{enumerate}
\item{\makebox[5.3cm][l]{$\alpha=t$, $\beta=0$} if\ \  $p_1,p_2\leq 2,\ t< \frac{1}{p_1}-\frac{1}{2} $},
\item{\makebox[5.3cm][l]{$\alpha=t$, $\beta=t+\frac{1}{2}-\frac{1}{p_1}$} if\ \  $p_1,p_2\leq 2 \text{,\ }t> \frac{1}{p_1}-\frac{1}{2}$},
\item{\makebox[5.3cm][l]{$\alpha=t-\frac{1}{2}+\frac{1}{p_2}$, $\beta=t+\frac{1}{p_2}-\frac{1}{p_1}$} if\ \  $p_1\leq 2\leq p_2$},
\item{\makebox[5.3cm][l]{$\alpha=\beta=t-\frac{1}{p_1}+\frac{1}{2} $} if\ \  $p_2\leq2\leq p_1,\ t>\frac{1}{p_1} $ },
\item{\makebox[5.3cm][l]{$\alpha=\beta=t-\frac{1}{p_1}+\frac{1}{p_2} $} if\ \  $ \Big(2\leq p_2\leq p_1,\ t>\frac{\frac{1}{p_2}-\frac{1}{p_1}}{\frac{p_1}{2}-1}\Big)$ or $ 2\leq p_1\leq p_2$},
\item{\makebox[5.3cm][l]{$\alpha=\frac{tp_1}{2} $, $\beta= t+\frac{1}{2}-\frac{1}{p_1}$} if\ \  $\Big(2\leq p_2< p_1,\ t<\frac{\frac{1}{p_2}-\frac{1}{p_1}}{\frac{p_1}{2}-1}\Big) \text{ or } \big(p_2\leq2<p_1,\ t<\frac{1}{p_1}\big).$}

\end{enumerate}
\end{proposition}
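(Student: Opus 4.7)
The plan is to derive matching upper and lower bounds for $x_n(id^*)$ by combining the building block decomposition of the sequence spaces with the known asymptotic behaviour of Weyl numbers of the finite-dimensional identities $id_{p_1,p_2}^m$, following the general blueprint that has proved successful for isotropic Besov spaces (cf. Lubitz \cite{Lub}).

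For the upper bound, I would decompose $id^*$ through the canonical projections $P_\mu$ onto the building blocks $(s^{t,\Omega}_{p_1,p_1}b)_\mu$, writing $id^* = \sum_{\mu\geq 0} id_\mu^*\circ P_\mu$, and split the sum at a truncation level $J$. By the additivity of Weyl numbers, for any distribution of natural numbers $n_\mu$ with $\sum_{\mu\leq J} n_\mu \lesssim n$,
$$
x_n(id^*) \;\lesssim\; \sum_{\mu=0}^{J} x_{n_\mu}(id_\mu^*) + \sum_{\mu>J}\|id_\mu^*\|.
$$
By Lemma \ref{ba1}(ii) the source building block is isometric to a scaled $\ell_{p_1}^{D_\mu}$ with $D_\mu \asymp \mu^{d-1}2^\mu$, and by Lemma \ref{ba2-1} the target $(s^{0,\Omega}_{p_2,2}f)_\mu$ sits inside a scaled $\ell_{p_2}^{D_\mu}$ at the cost of a factor $\mu^{(d-1)(1/2-1/p_2)_+}$. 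The task thus reduces to sharp estimates of $x_{n_\mu}(id_{p_1,p_2}^{D_\mu})$, which are classical and go back to Gluskin, K\"onig and Pietsch. Optimizing the $n_\mu$ (typically $n_\mu\asymp D_\mu$ for $\mu\leq J$) together with a cut-off $J$ chosen so that $2^J J^{d-1}\asymp n$ yields the desired rate $n^{-\alpha}(\log n)^{(d-1)\beta}$ in each regime.

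For the lower bound, the strategy is dual: factor $id^*$ through a single building block and invoke the ideal property (s3). As in Section \ref{bern}, a commutative diagram relating $(s^{t,\Omega}_{p_1,p_1}b)_\mu$ with an appropriate $\ell_p^{D_\mu}$ target (taking $p=p_2$ when $p_2\leq 2$ and an intermediate $p\in[2,p_2]$ otherwise) gives
$$
x_n(id^*)\;\gtrsim\; \mu^{(d-1)\gamma}\,2^{\mu(-t+1/p_1-1/p)}\,x_n(id_{p_1,p}^{D_\mu}),
$$
with an exponent $\gamma$ depending on the regime. Selecting $\mu$ and $n$ optimally in each of the six cases then reproduces the upper bound from below.

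The main obstacle is the sheer case analysis. Each of the six regimes of $(p_1,p_2,t)$ calls for its own choice of auxiliary parameter $p$, its own truncation level and its own optimization. The most delicate cases are (v) and (vi), where $(p_1,p_2)$ straddles $2$ and the exponent $\alpha$ undergoes a transition at the critical value $t=(1/p_2-1/p_1)/(p_1/2-1)$ or $t=1/p_1$; the finite-dimensional Weyl numbers $x_n(id_{p_1,p}^m)$ themselves display a corresponding transition between the $n^{-1/2}$ regime and the $m^{1/p-1/p_1}$ regime, and one must verify that this internal transition interacts correctly with the scaling $2^{\mu(\cdot)}$ and with the dimension $D_\mu$. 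Matching the logarithmic exponent $\beta$ is equally delicate, since it combines the $\mu^{(d-1)\cdot}$ contributions from Lemma \ref{ba2-1} with the multiplicity $\mu^{d-1}$ of indices $\bar\nu$ satisfying $|\bar\nu|_1=\mu$; care is required to keep all implicit constants uniform in $\mu$ when summing or optimizing.
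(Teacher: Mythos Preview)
The paper does not prove this proposition at all; it is quoted from \cite{KiSi} (Nguyen and Sickel) with the sentence ``To proceed we recall the behaviour of Weyl numbers \ldots, see \cite{KiSi}.'' So there is no paper-internal proof to compare against.

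That said, your outlined strategy is exactly the one carried out in \cite{KiSi}: split $id^*$ into building blocks $id_\mu^*$, exploit the additivity and multiplicativity of Weyl numbers (which, unlike Bernstein numbers, \emph{are} $s$-numbers in the full sense), reduce to the finite-dimensional Weyl numbers $x_n(id_{p_1,p_2}^{D_\mu})$, and optimize over the distribution of $n_\mu$ and the truncation level $J$. The lower bound via the ideal property and a single-block factorization is likewise the standard route and is what \cite{KiSi} does. So your plan is correct in spirit and matches the cited source.

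Two cautions if you were to carry it out in detail. First, the target block $(s^{0,\Omega}_{p_2,2}f)_\mu$ is not literally a scaled $\ell_{p_2}^{D_\mu}$ when $p_2\neq 2$; Lemma~\ref{ba2-1} only gives a norm bound into $(s^{0,\Omega}_{p_2,p_2}f)_\mu$ or similar, and one must route carefully through intermediate $b$- and $f$-spaces (this is where the exponent $(\frac{1}{2}-\frac{1}{p_2})_+$ in your sketch can go wrong). Second, in the low-smoothness regimes (i) and (vi) the naive choice $n_\mu\asymp D_\mu$ for all $\mu\le J$ is \emph{not} optimal; one needs a genuinely nonuniform allocation of $n_\mu$ (or an appeal to operator-ideal interpolation), and getting the logarithmic exponent $\beta$ right hinges on this. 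The cited preprint runs to 64 pages largely because of these two points and the ensuing case analysis.
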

As a consequence of Proposition \ref{weyl2} and Lemma \ref{bern-weyl0} we obtain the following
\begin{corollary}
\label{bern-weyl2}
Let $1\leq p_1\leq \infty$, $1\leq p_2<\infty$ and $t>(\frac{1}{p_1}-\frac{1}{p_2})_+$. Then
$$b_n(id^*: s_{p_1,p_1}^{t,\Omega} b\rightarrow s_{p_2,2}^{0,\Omega}f)\lesssim x_n(id^*: s_{p_1,p_1}^{t,\Omega} b\rightarrow s_{p_2,2}^{0,\Omega}f),\ \ n\geq 2.$$
\end{corollary}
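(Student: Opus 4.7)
The result is an immediate combination of two earlier facts. Proposition \ref{weyl2} provides the exact asymptotic
\[ x_n(id^*) \asymp n^{-\alpha}(\log n)^{(d-1)\beta}, \qquad n \geq 2, \]
in each of the six parameter regions, while Corollary \ref{bern-weyl0} states that whenever a Weyl-number sequence admits such a polynomial--logarithmic asymptotic with \emph{nonnegative} log-exponent, one automatically has $b_n(T) \lesssim x_n(T)$. So the plan is simply to feed the conclusion of Proposition \ref{weyl2} into Corollary \ref{bern-weyl0}.

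The only point that requires a separate check is that the log-exponent $(d-1)\beta$ produced by Proposition \ref{weyl2} is indeed nonnegative in every one of the six cases. Since $d\geq 1$, this reduces to verifying $\beta \geq 0$. Case (i) is trivial. In cases (ii), (iv), and (vi), the explicit smoothness hypothesis on $t$ (respectively $t > \tfrac{1}{p_1} - \tfrac{1}{2}$, $t > \tfrac{1}{p_1}$, and the condition $p_1 > 2$) immediately forces $\beta > 0$. In cases (iii) and (v), the compactness hypothesis $t > (\tfrac{1}{p_1} - \tfrac{1}{p_2})_+$, combined with the position of $p_1,p_2$ relative to $2$ prescribed in each subcase, likewise gives $\beta = t + \tfrac{1}{p_2} - \tfrac{1}{p_1} > 0$ (or the analogous $\beta = t - \tfrac{1}{p_1} + \tfrac{1}{2}$ in the partial subcase of (v)).

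With this sign check recorded, Corollary \ref{bern-weyl0} applied to $T = id^*$ with exponent pair $\alpha$ and $(d-1)\beta \geq 0$ yields
\[ b_n(id^*) \lesssim n^{-\alpha}(\log n)^{(d-1)\beta} \asymp x_n(id^*), \qquad n \geq 2, \]
which is the claim. There is no genuine obstacle in the argument: the hard work sits inside Pietsch's multiplicative inequality \eqref{bern1} and the sharp Weyl-number asymptotic of Proposition \ref{weyl2}, both already available, and the present statement is a one-line consequence once the nonnegativity of the log-exponent is verified case by case.
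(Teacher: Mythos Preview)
Your proof is correct and follows exactly the paper's approach: the paper simply records the corollary ``as a consequence of Proposition~\ref{weyl2} and Corollary~\ref{bern-weyl0}'' without writing out the case check, and you have supplied the verification that the log-exponent is nonnegative in each of the six regimes. One tiny slip: in case~(v) the exponent is $\beta = t - \tfrac{1}{p_1} + \tfrac{1}{p_2}$ in \emph{both} subcases (there is no subcase with $\beta = t - \tfrac{1}{p_1} + \tfrac{1}{2}$), but this does not affect the argument since $\beta > 0$ follows there directly from $t > (\tfrac{1}{p_1} - \tfrac{1}{p_2})_+$ together with $p_2 \le p_1$ or $p_1 \le p_2$.
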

Now we are in position to formulate the main result of this subsection.
\begin{theorem}\label{main2-1}
Let $1\leq p_1\leq \infty$, $1\leq p_2<\infty$ and $t>(\frac{1}{p_1}-\frac{1}{p_2})_+$. Then
$$b_n(id^*: s_{p_1,p_1}^{t,\Omega} b\rightarrow s_{p_2,2}^{0,\Omega}f)\asymp \phi_2(n,t,p_1,p_2),\ \ n\geq 2,$$
where $\phi_2(n,t,p_1,p_2)$ is given in Theorem \ref{main2}.
\end{theorem}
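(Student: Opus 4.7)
\emph{Upper bound.} The strategy is to combine two estimates: $b_n(id^*)\lesssim x_n(id^*)$ from Corollary~\ref{bern-weyl2}, with $x_n(id^*)\asymp\varphi_2$ from Proposition~\ref{weyl2}; and $b_n(id^*)\le\delta_n\lesssim n^{-t}(\log n)^{(d-1)(t-1/p_1+1/2)}$ obtained by chaining Proposition~\ref{bern-width} with Theorem~\ref{width2}. Inspection of $\phi_2$ against $\varphi_2$ reveals that the Weyl estimate already gives the sharp upper bound in cases (i), (iii), (iv), (v) and in case (ii) when $p_1,p_2\le 2$. Only in the subregion $p_1\le 2\le p_2$ of case (ii) is $\varphi_2$ strictly larger than $\phi_2$; there Theorem~\ref{width2} supplies exactly the correct exponent $n^{-t}(\log n)^{(d-1)(t+1/2-1/p_1)}$.

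\emph{Lower bound: the generic mechanism.} The workhorses are the inequalities~\eqref{case1} and~\eqref{case4}, which reduce the task to the Bernstein numbers of finite-dimensional embeddings collected in Corollary~\ref{ber}. For each region I would choose $(n,\mu,p)$ as follows: case (ii) with $p_1,p_2\le 2$, use~\eqref{case1} with $n\asymp D_\mu$ and~\eqref{th3}; case (ii) with $p_1\le 2\le p_2$, use~\eqref{case4} with $p=2$, $n\asymp D_\mu$, and~\eqref{th3}; case (iii), use~\eqref{case1} with $n\asymp D_\mu$ and~\eqref{th2}; case (iv), use~\eqref{case4} with $p=p_2$, $n\asymp D_\mu$, and~\eqref{th1}; case (v) first half, use~\eqref{case4} with $p=2$, $n\asymp D_\mu^{2/p_1}$, and~\eqref{th4}; case (v) second half, use~\eqref{case1} with $n\asymp D_\mu^{2/p_1}$ and~\eqref{th4}. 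In each region, substituting $D_\mu\asymp 2^\mu\mu^{d-1}$ and $\log n\asymp\mu$ collapses the resulting product of powers of $2^\mu$ and of $\mu$ to precisely $\phi_2(n,t,p_1,p_2)$.

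\emph{Case (i) via a single block.} In case (i) the condition $t<1/p_1-1/2$ causes the auxiliary polylogarithmic factor generated by~\eqref{case1} to have the wrong sign, so that mechanism fails. I would instead restrict $id^*$ to the subspace supported on a single multi-index $\bar\nu$ with $|\bar\nu|_1=\mu$. Because only one $\bar\nu$ is present, the inner $\ell_2$ sum in the $f$-norm collapses, and both the source and target norms reduce to scaled $\ell_p$ norms; the ideal property then yields
\[
b_n(id^*)\gtrsim 2^{\mu(-t+1/p_1-1/p_2)}\,b_n\big(id_{p_1,p_2}^{\lfloor c\,2^\mu\rfloor}\big),\qquad 1\le n\le c\,2^\mu,
\]
without any spurious polylogarithmic factor. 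Combined with~\eqref{th3} and the choice $n\asymp 2^\mu$, this delivers $b_n(id^*)\gtrsim n^{-t}$, matching the upper bound.

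\emph{Main obstacle.} The genuinely delicate step is the low-smoothness regime of case (v), where the correct dimension is $n\asymp D_\mu^{2/p_1}$, sitting exactly at the Gelfand/Bernstein transition underlying Corollary~\ref{ber}(ii). The mixture of powers of $2^\mu$ and $\mu$ produced by~\eqref{case1} and~\eqref{case4} must collapse, after substitution, to precisely $n^{-tp_1/2}(\log n)^{(d-1)(t+1/2-1/p_1)}$, and verifying that the numerous small exponents cancel correctly in both subregions of case (v) is the most unforgiving bookkeeping in the argument. Once the right pairing of inequality and Corollary~\ref{ber} estimate has been identified, the remaining cases are routine arithmetic.
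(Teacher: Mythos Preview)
Your proposal is correct and follows essentially the same route as the paper: the upper bound via $b_n\lesssim\min(x_n,w_n)$ using Corollary~\ref{bern-weyl2}, Proposition~\ref{weyl2}, Proposition~\ref{bern-width} and Theorem~\ref{width2}; the lower bound in cases (ii)--(iv) via \eqref{case1}/\eqref{case4} paired with Corollary~\ref{ber} at $n\asymp D_\mu$; case (v) via the same inequalities at $n\asymp D_\mu^{2/p_1}$ with \eqref{th4}; and case (i) via restriction to a single block $A_{\bar\nu}^\Omega$. Your case-by-case identification of the correct pairing (e.g.\ $p=2$ in \eqref{case4} for the $p_1\le 2\le p_2$ subregion, and the observation that the Weyl bound suffices everywhere except precisely there) is more explicit than the paper's Substep~1.3, which merely lists the ingredients, but the argument is the same.
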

\begin{proof}
{\it Step 1.} Let $\phi_2(n,t,p_1,p_2)$ be the function defined in Theorem \ref{main2}. We prove the lower bound.\\
{\it Substep 1.1.} Proof of (i). We consider the following commutative diagram
\[
\begin{CD}
s^{t,\Omega}_{p_1,p_1}b @ > id^* >> s^{0,\Omega}_{p_2,2}f \\
@A id_1 AA @VV id_2 V\\
2^{\mu(t-\frac{1}{p_1})}\ell_{p_1}^{A_{\mu}} @ > I_{\mu} >> 2^{\mu(0-\frac{1}{p_2})} \ell_{p_2}^{A_{\mu}} \,.
\end{CD}
\]
Here $A_{\mu}=|A_{\bar{\nu}}^{\Omega}|$ for some $\bar{\nu}$ with $|\bar{\nu}|_1=\mu$, $id_1$ is the canonical embedding, whereas $id_2$ is the canonical projection. From $(s3)$ we derive 
\[
b_n(I_{\mu})=b_n(id_2\circ id^* \circ id_1)\leq \| id_1\| \, \|id_2\| \, b_n(id^*)=b_n(id^*)\, .
\]
Also $(s3)$ guarantees
\[
b_n(I_{\mu})= 2^{\mu(-t+\frac{1}{p_1}-\frac{1}{p_2})} \, b_n(id_{p_1,p_2}^{A_{\mu}})\, .
\]
We choose $n=\big[\frac{A_{\mu}}{2}\big]$. Then the inequality (\ref{th3}) yields
\[
b_n(I_{\mu})\geq 2^{\mu(-t+\frac{1}{p_1}-\frac{1}{p_2})} \, b_n(id_{p_1,p_2}^{A_{\mu}})
\gtrsim 2^{\mu(-t+\frac{1}{p_1}-\frac{1}{p_2})} \, 2^{\mu(\frac{1}{p_2}-\frac{1}{p_1})} = 2^{-\mu t} \asymp n^{-t}\, ,
\]
which implies $b_n(id^*)\gtrsim n^{-t}$.\\
{\it Substep 1.2.} We prove $(v)$. In this case we choose $n=[D_{\mu}^{\frac{2}{p_1}}]$. 
If $2\leq p_2\leq p_1$ it follows from property (\ref{th4}) that
\[
b_n(id_{p_1,2}^{D_{\mu}} )\gtrsim D_{\mu}^{\frac{1}{2}-\frac{1}{p_1}} \gtrsim (\mu^{d-1}2^{\mu})^{\frac{1}{2}-\frac{1}{p_1}}\, .
\]
Inequality \eqref{case4} with $p=2$ yields
\[
b_n(id^*) \gtrsim  2^{\mu(-t+\frac{1}{p_1}-\frac{1}{2})}(\mu^{d-1}2^{\mu})^{\frac{1}{2}-\frac{1}{p_1}}\gtrsim \mu^{(d-1)(\frac{1}{2}-\frac{1}{p_1})} \, 2^{-t\mu} \, .
\]
If $p_2\leq 2\leq p_1$ we employ inequalities \eqref{case1}, \eqref{th4} and obtain
\beqq b_n(id^*)&\gtrsim&\mu^{(d-1)(-\frac{1}{p_2}+\frac{1}{2})}2^{\mu(-t+\frac{1}{p_1}-\frac{1}{p_2})}b_n(id_{p_1,p_2}^{D_{\mu}} ) \\
&\gtrsim& \mu^{(d-1)(-\frac{1}{p_2}+\frac{1}{2})}2^{\mu(-t+\frac{1}{p_1}-\frac{1}{p_2})}(\mu^{d-1}2^{\mu})^{\frac{1}{p_2}-\frac{1}{p_1}}\\
&=&\mu^{(d-1)(\frac{1}{2}-\frac{1}{p_1})} \, 2^{-t\mu} 
\eeqq
Rewriting the right-hand side in dependence on $n$ we find
\[
b_n(id^*) \gtrsim n^{-\frac{tp_1}{2}}(\log n)^{(d-1)(t+\frac{1}{2}-\frac{1}{p_1})}\, .
\]
{\it Substep 1.3.} For the lower bound in the cases $(ii)$, $(iii)$ and $(iv)$ we use inequalities (\ref{case1}), (\ref{case4}) combined with (\ref{th1}), (\ref{th2}), \eqref{th3} and choose $n=\big[\frac{D_{\mu}}{2}\big]$. \\
\noindent
{\it Step 2.} To prove the upper bound we use, as in the isotropic case, the inequality 
$$b_n\lesssim \min(x_n,w_n),\ \ n\in \mathbb{N},$$
see Proposition \ref{bern-width} and Corollary \ref{bern-weyl2}. In view of Theorem \ref{width2} and Proposition \ref{weyl2} the claim follows.
\end{proof}
\subsection{Proof of Theorem \ref{main2}}
Now we are ready to prove the Theorem \ref{main2}. We shift the results obtained in Subsection \ref{bern} to the situation of function spaces.\\
{\bf Proof of Theorem \ref{main2}.} First we prove that under the restrictions in Theorem \ref{main2-1} 
\beq\label{ws-2}
b_n( id^*: s_{p_1,p_1}^{t,\Omega}b \to s_{p_2,2}^{0,\Omega}f)\asymp b_n\big(id: S_{p_1,p_1}^t B(\Omega)\to S_{p_2,2}^0 F(\Omega)\big)
\eeq
holds for all $n \in \N$. By $\ce_d $ we denote a linear continuous extension operator from $ S^{t}_{p_1,p_1}B (\Omega)$ to $S^{t}_{p_1,p_1}B (\R)$. For the existence of those operators we refer to \cite{KiSi}, see also Triebel \cite[1.2.8]{Tr10}. We consider the commutative diagram
\[
\begin{CD}
S_{p_1,p_1}^{t}B(\Omega) @ >\mathcal{E}_d >> S_{p_1,p_1}^{t}B(\mathbb{R}^d) @>\mathcal{W}>>s_{p_1,p_1}^{t,\Omega}b\\
@V id VV @. @VV id^*V\\
S_{p_2,2}^{0}F(\Omega) @ <R_{\Omega}<< S_{p_2,2}^{0}F(\mathbb{R}^d)@ <\mathcal{W}^* << s_{p_2,2}^{0,\Omega}f
\end{CD}
\]
Here the mapping $\mathcal{W}$ is defined as 
\[ 
\mathcal{W} f := \, \Big( 2^{|\bar{\nu}|_1}\, \langle f, \, \Psi_{\bar{\nu}, \bar{m}}\rangle
\Big)_{\bar{\nu}\in \N_0^d, \, \bar{m} \in A^{\Omega}_{\bar{\nu}}}\, .
\]
Furthermore, $\mathcal{W}^*$ is defined as
\[
\mathcal{W}^* \lambda := 
\sum_{\bar{\nu} \in \N_0^ d}
\sum_{\bar{m} \in A^{\Omega}_{\bar{\nu}}} \lambda_{\bar{\nu}, \bar{m}} \, \Psi_{\bar{\nu}, \bar{m}}
\]
and $R_\Omega$ means the restriction to $\Omega$. The boundedness of $\ce_d, \mathcal{W}, \mathcal{W}^*, R_\Omega$ and property $(s3)$ yield
$$b_n (id)\lesssim b_n (id^*).$$ 
Modifying the diagram we obtain the inverse inequality as well, see \cite{Vy1}. Now the assertion in Theorem \ref{main2} follows from \eqref{ws-2}, Theorem \ref{main2-1} and the Littlewood-Paley assertion $S_{p_2,2}^0F(\Omega)=L_{p_2}(\Omega)$ (in the sence of equivalent norms, $1<p_2<\infty$), see Nikol'skij \cite[1.5.6]{Ni}. The proof is complete. \qed


\end{document}